\numberwithin{equation}{section}
\definecolor{cof}{RGB}{219,144,71}
\definecolor{pur}{RGB}{186,146,162}
\definecolor{greeo}{RGB}{91,173,69}
\definecolor{greet}{RGB}{52,111,72}
\definecolor{pur}{RGB}{186,146,162}
\newtheorem{thm}{Theorem}[section]
\newtheorem*{thm*}{Theorem}
\newtheorem{prop}[thm]{Proposition}
\newtheorem{lem}[thm]{Lemma}
\newtheorem{cor}[thm]{Corollary}
\theoremstyle{definition}
\newtheorem{warn}[thm]{Warning}
\newtheorem{defn}[thm]{Definition}
\newtheorem*{defn*}{Definition}
\newtheorem{ex}[thm]{Example}
\newtheorem{rmk}[thm]{Remark}
\newcommand{\longhookrightarrow}{\lhook\joinrel\longrightarrow}
\newcommand{\tri}{\ol{\nabla}}
\newcommand{\bV}{\mathbb{V}}
\newcommand{\boxprod}{\hspace{0.2mm}\Box\hspace{0.2mm}}
\newcommand{\lifts}{\boxslash}
\newcommand{\cin}{\subseteq}
\newcommand\ol[1]{\ensuremath{\overline{#1}}}
\newcommand\abs[1]{\ensuremath{\lvert#1\rvert}}
\newcommand{\nospace}[1]{\makebox[0pt][l]{\,#1}}
\newcommand{\mcT}{\mathcal{T}}
\newcommand{\mcS}{\mathcal{S}}
\newcommand{\mcC}{\mathcal{C}}
\newcommand{\mcB}{\mathcal{B}}
\newcommand{\mcM}{\mathcal{M}}
\newcommand{\op}{\operatorname{op}}
\newcommand{\id}{\operatorname{id}}
\newcommand{\Hom}{\mathsf{Hom}}
\newcommand{\core}{\operatorname{core}}
\newcommand{\esd}{\operatorname{esd}}
\newcommand{\InnHorn}{\mathsf{IH}}
\newcommand{\GenInnHorn}{\mathsf{GIH}}
\newcommand{\TwoSegHorn}{\mathsf{2SH}}
\newcommand{\IsoHorn}{\mathsf{IsoHorn}}
\newcommand{\Bdry}{\mathsf{Bdry}}
\newcommand{\Mono}{\mathsf{Mono}}
\newcommand{\Set}{\mathsf{Set}}
\newcommand{\sSet}{\mathsf{sSet}}
\newcommand{\ssSet}{\mathsf{ssSet}}
\newcommand{\Cat}{\mathsf{Cat}}
\newcommand{\Gpd}{\mathsf{Gpd}}
\newcommand{\Ab}{\mathsf{Ab}}
\newcommand{\Sp}{\mathsf{Sp}}
\newcommand{\Map}{\mathsf{Map}}
\newcommand{\arxiv}[1]{\href{http://arxiv.org/pdf/#1}{arXiv:#1}}
\begin{document}

\title{Quasi-2-Segal sets}

\author[M. Feller]{Matt Feller}

\address{Max Planck Institute for Mathematics, 53111 Bonn, Germany}

\email{feller@virginia.edu}

\date{\today}

\thanks{The author was partially supported by NSF RTG grant DMS-1839968 and NSF grant DMS-1906281.}

\begin{abstract}
    We show that the 2-Segal spaces (also called decomposition spaces) of Dyckerhoff-Kapranov and G\'alvez-Kock-Tonks have a natural analogue within simplicial sets, which we call quasi-2-Segal sets, and that the two ideas enjoy a similar relationship as the one Segal spaces have with quasi-categories. In particular, we construct a model structure on the category of simplicial sets whose fibrant objects are the quasi-2-Segal sets which is Quillen equivalent to a model structure for complete 2-Segal spaces (where our notion of completeness comes from one of the equivalent characterizations of completeness for Segal spaces). We also prove a path space criterion, which says that a simplicial set is a quasi-2-Segal set if and only if its path spaces (also called d\'ecalage) are quasi-categories, as well as an edgewise subdivision criterion.
\end{abstract}

\maketitle

\setcounter{tocdepth}{1}

\tableofcontents

\section{Introduction}

In the decade since its introduction, the theory of 2-Segal spaces has seen a variety of applications and connections to a number of diverse areas of mathematics. 2-Segal spaces were first studied by Dyckerhoff-Kapranov in 2012 (in a preprint version of \cite{DK}), with applications ranging from representation theory and Hall algebras to homological algebra as well as geometry. Around the same time,  G{\'a}lvez-Kock-Tonks \cite{GKT:partI}, \cite{GKT:partII} independently began exploring the equivalent notion of decomposition spaces in order to extend the theory of M\"obius inversion beyond the realm of posets and categories. An important example of a 2-Segal space which both sets of authors identified independently is the output of Waldhausen's $S_{\bullet}$ construction from algebraic $K$-theory. Subsequent work by other authors has further expanded the scope of the theory; in \cite{BOORS:objects} it is shown that 2-Segal objects can be viewed equivalently through the lens of double categories, in \cite{Stern} they are seen to be equivalent to algebras of spans, while \cite{Walde:operads} shows that they can also be viewed as invertible operads. There has also been further development in the direction of Hall algebras in \cite{Young}, as well as continued work exploring M\"obius inversion from G{\'a}lvez-Kock-Tonks and others, such as \cite{Carlier}.

The idea at the heart of 2-Segal theory is that of a 2-Segal set, a simplicial set which behaves like a ``multi-valued category,'' which has objects, morphisms, and notion of composition which is associative in a certain sense, but which is not necessarily unique or even defined for a given pair of morphisms $x\to y$ and $y\to z$. A 2-Segal space is a simplicial space which behaves like a multi-valued category up to homotopy, in the same way that Segal spaces are simplicial spaces that behave like categories up to homotopy. The reason for the ``2'' in ``2-Segal'' becomes more clear from the explicit definition, which we give in Section \ref{sec:background}. The idea is that the associativity encoded by the 2-Segal condition can be expressed as a kind of composition of 2-simplices, compared to the composition of 1-simplices encoded by the Segal condition.

Much unlike the situation for Segal spaces and categories, there are relatively few examples of (strict) 2-Segal sets compared to the homotopical version, 2-Segal spaces. To illustrate why examples naturally tend to be up-to-homotopy, it is helpful to consider the $S_{\bullet}$ construction mentioned above, as it actually characterizes 2-Segal spaces in a certain sense as shown in \cite{BOORS:objects}. Let us stick with a relatively friendly input, the category of abelian groups $\Ab$, to show that homotopy considerations enter even in the most basic case. The simplicial object $S_{\bullet}(\Ab)$ has as $n$-simplices diagrams of abelian groups of the form




\[
\adjustbox{scale=0.8}{
\begin{tikzcd}[row sep=small, column sep=small]
0 \arrow[r, hook] & {A_{1,0}} \arrow[d, two heads] &  & 0 \arrow[r, hook] & {A_{1,0}} \arrow[d, two heads] \arrow[r, hook] & {A_{2,0}} \arrow[d, two heads] &  & 0 \arrow[r, hook] & {A_{1,0}} \arrow[r, hook] \arrow[d, two heads] & {A_{2,0}} \arrow[r, hook] \arrow[d, two heads] & {A_{3,0}} \arrow[d, two heads] \\
                  & 0                              &  &                   & 0 \arrow[r, hook]                              & {A_{2,1}} \arrow[d, two heads] &  &                   & 0 \arrow[r, hook]                              & {A_{2,1}} \arrow[d, two heads] \arrow[r, hook] & {A_{3,1}} \arrow[d, two heads] \\
                  &                                &  &                   &                                                & 0                              &  &                   &                                                & 0 \arrow[r, hook]                              & {A_{3,2}} \arrow[d, two heads] \\
                  &                                &  &                   &                                                &                                &  &                   &                                                &                                                & 0                             
\end{tikzcd}
}
\]
for $n=1,2,3$, where the horizontal maps are injective, the vertical maps are surjective, and each square is both a pushout and a pullback. Each face map $d_i$ amounts to deleting the entries $A_{j,k}$ such that $j$ or $k$ equals $i$. In $S_{\bullet}(\Ab)$, the 1-simplices are effectively abelian groups (since the maps to and from $0$ add no data), and 
the 2-simplices are short exact sequences. That is, given two abelian groups $A$ and $B$, a choice of composite of $A$ and $B$ is a short exact sequence
\[
\begin{tikzcd}[column sep=small]
0 \arrow[r, hook] & {A} \arrow[r, hook] & {C} \arrow[r, two heads] & {B} \arrow[r, two heads] & 0\nospace{.}
\end{tikzcd}
\]

In the context of $S_{\bullet}(\Ab)$, the 2-Segal condition amounts to saying that a 3-simplex diagram, as shown above, is uniquely determined by the sub-diagram with the $A_{3,1}$ term missing, or by the sub-diagram with the $A_{2,0}$ term missing. Where the ``up to homotopy'' consideration enters here is that pullbacks and pushouts are only determined up to isomorphism, meaning that if we were to define $S_{\bullet}(\Ab)$ as a simplicial set, it would not quite satisfy the strict 2-Segal condition. The usual way to address this issue is to define $S_{\bullet}(\Ab)$ instead as a simplicial groupoid $\Delta^{\op}\to \Gpd$ or as a simplicial space $\Delta^{\op}\to \sSet$, making $S_{\bullet}(\Ab)$ a 2-Segal space.

When considering up-to-homotopy versions of categories, Segal spaces are just one of the different models in the literature. Some work has been done to develop Segal space theory---for example, in \cite{Rezk:CSS} and \cite{Rasekh:Yoneda}---but the model with the most robust theory is that of quasi-categories, thanks largely to Joyal \cite{Joyal:theory} and Lurie \cite{Lurie:HTT}. The two ideas are equivalent via two adjunctions between simplicial sets $\sSet$ and simplicial spaces $\ssSet$, as shown in \cite{JT}. These adjunctions give us a way to move between the two models, exploiting the nice properties of one model or the other depending on the situation.

Recall that a quasi-category is a simplicial set $X$ with fillers of all horn inclusions $\Lambda^i[n]\hookrightarrow \Delta[n]$ which are inner (meaning $0<i<n$), where the horn $\Lambda^i[n]$ is the union of all but the $d_i$ face of $\Delta[n]$. We think of the 0-simplices of $X$ as the objects, the 1-simplices of $X$ as the morphisms, and the 2-simplices as witnessing its $d_1$ face as a composite of its $d_2$ and $d_0$ faces. Having fillers with respect to $\Lambda^1[2]\hookrightarrow \Delta[2]$ tells us that there always exists a composite of two morphisms $x\to y$ and $y\to z$, but does not say anything about uniqueness. Instead, the horn conditions for $n\geq 3$ end up implying that any choice of composition is unique up to coherent homotopy.

Let us denote by $S^{\operatorname{set}}_{\bullet}(\Ab)$ the version of $S_{\bullet}(\Ab)$ where we only take sets instead of groupoids or spaces. Given the data of a 3-simplex diagram except for the $A_{3,1}$ term, even though there are many choices of pushout which fill in the diagram, they are all isomorphic. We therefore might ask whether this isomorphism is witnessed by the higher simplices of $S^{\operatorname{set}}_{\bullet}(\Ab)$, just like how in a quasi-category different composites are homotopic as witnessed by higher simplices. In other words, is $S^{\operatorname{set}}_{\bullet}(\Ab)\colon \Delta^{\op}\to \Set$ an example of what one might call a \emph{quasi-2-Segal set}? The goal of this paper is to answer ``yes!''

To give our definition of quasi-2-Segal sets, we must describe a 2-Segal analogue of inner horns. Instead of an ordinary horn $\Lambda^i[n]$ which is the union of all but one face, we take horns of the form $\Lambda^{i,j}[n]$ where we take the union of all but the $d_i$ and $d_j$ faces. Instead of the inner condition, we require that $i< j-1$ and that if $i=0$ then $j\neq n$; in other words, $i$ and $j$ are not adjacent mod $(n+1)$. We call a horn $\Lambda^{i,j}[n]$ satisfying this condition a \emph{2-Segal horn}. We can then define quasi-2-Segal sets in analogy with quasi-categories.

\vspace{2mm}

\noindent\textbf{Definition \ref{def:q2seg}.} A simplicial set $X$ is a \emph{quasi-2-Segal set} if it has fillers of all 2-Segal horn inclusions $\Lambda^{i,j}[n]\hookrightarrow \Delta[n]$.

\vspace{2mm}

One can check that $S^{\operatorname{set}}_{\bullet}(\Ab)$ satisfies this definition, using arguments similar to those sketched above for the $n=3$ case.

Just as in the case of quasi-categories, it is not immediately clear from the definition that higher simplices provide a good notion of homotopy in quasi-2-Segal sets. Hence, a primary goal of this paper is to prove the following 2-Segal analogues of fundamental results from quasi-category theory.

\begin{thm*}
The following hold:
\begin{enumerate}
    \item \emph{(Corollary~\ref{cor:contractiblespace})} A simplicial set $X$ is quasi-2-Segal if and only if the induced maps $\Map(\Delta[3],X)\to \Map(\Lambda^{0,2}[3],X)$ and $\Map(\Delta[3],X)\to \Map(\Lambda^{1,3}[3],X)$ are trivial fibrations.
    \item \emph{(Lemma~\ref{lem:invertinghomotopy}, Proposition~\ref{prop:augmentedhornlifting})} The $n$-simplices in a quasi-2-Segal set $X$ with invertible edge $i\to i+1$ behave like ``homotopies of $(n-1)$-simplices.''
    \item \emph{(Theorem~\ref{thm:modelstructure})} There exists a model structure on the category of simplicial sets whose fibrant objects are precisely the quasi-2-Segal sets.
\end{enumerate}
\end{thm*}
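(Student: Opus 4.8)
The plan is to build the model structure using Cisinski's theory of cofibrantly generated model structures on a presheaf category, applied to $\sSet = \widehat{\Delta}$. I would take the cofibrations to be all monomorphisms and the standard interval $\Delta[1]$ (with its two endpoint inclusions $\Delta[0] \rightrightarrows \Delta[1]$ and the projection $\Delta[1] \to \Delta[0]$) as the exact cylinder, and feed in the set $S = \{\Lambda^{i,j}[n] \hookrightarrow \Delta[n]\}$ of 2-Segal horn inclusions as generating anodyne maps. Cisinski's theorem then produces, with no further input, a cofibrantly generated model structure whose cofibrations are the monomorphisms and whose fibrant objects are precisely the simplicial sets with the right lifting property against the class $\Lambda(S)$ of anodyne extensions, i.e.\ the smallest weakly saturated class containing $S$ and stable under the pushout-product (Leibniz) operation against the cylinder inclusions. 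Existence is therefore immediate, and the weak equivalences are automatically determined through the $\Delta[1]$-homotopy relation; all the content of the theorem lies in identifying the fibrant objects with the quasi-2-Segal sets.

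Since $S \subseteq \Lambda(S)$, every fibrant object is a quasi-2-Segal set, so the substance is the converse: if $X$ has the right lifting property against all 2-Segal horns, I must show it lifts against all of $\Lambda(S)$. Because maps with a fixed right lifting property always form a weakly saturated class, the right-orthogonal of $S$ agrees with that of its weak saturation $\overline{S}$, so it suffices to prove $\Lambda(S) \subseteq \overline{S}$. By Cisinski's description of $\Lambda(S)$ this reduces to a single stability statement, namely that $\overline{S}$ is closed under pushout-product with the endpoint inclusions of $\Delta[1]$, and by standard manipulations of pushout-products this in turn reduces to checking it on the generators of $S$: for each 2-Segal horn and each $\e \in \{0,1\}$, the Leibniz product
\[
\bigl(\Lambda^{i,j}[n] \times \Delta[1]\bigr) \cup_{\Lambda^{i,j}[n] \times \{\e\}} \bigl(\Delta[n] \times \{\e\}\bigr) \longrightarrow \Delta[n] \times \Delta[1]
\]
must lie in $\overline{S}$. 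This is the exact analogue, for 2-Segal horns, of Joyal's lemma that the inner anodyne maps are stable under pushout-product with monomorphisms.

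The main obstacle is precisely this Leibniz-product lemma, and it is where the real combinatorial work lives. I would attack it by the standard prism decomposition: the nondegenerate $(n+1)$-simplices of $\Delta[n] \times \Delta[1]$ are indexed by the $n+1$ shuffles, and filtering the prism by attaching these top-dimensional simplices in the appropriate order exhibits the Leibniz product above as a finite composite of pushouts of horn inclusions $\Lambda^{k,\ell}[n+1] \hookrightarrow \Delta[n+1]$. The delicate point, and what distinguishes this from the inner-horn case, is to verify that at every stage the pair $(k,\ell)$ of omitted faces remains non-adjacent modulo $n+2$, so that each attaching map is a genuine 2-Segal horn rather than merely an inner horn; this requires carefully tracking how the original indices $i < j$ and the constraint ``$i=0 \Rightarrow j \neq n$'' interact with the location of the step in each shuffle. (Alternatively, one could try to transfer the model structure across the d\'ecalage using the path-space criterion and the existing Joyal structure, but the direct Cisinski route keeps the bookkeeping self-contained.) Once this lemma is in hand, the identification of fibrant objects is complete and the remaining structure of the theorem follows formally from Cisinski's theorem.
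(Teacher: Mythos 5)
Your construction fails at the very first step: the choice of $\Delta[1]$ as the exact cylinder. In Cisinski's machinery the class of anodyne extensions $\Lambda_{I}(S)$ is required, \emph{independently of $S$}, to contain every pushout-product $(\{\e\}\hookrightarrow \Delta[1])\boxprod (A\hookrightarrow B)$ for an arbitrary monomorphism $A\hookrightarrow B$ (condition (An1) of \cite[Def.~2.4.11]{Cisinski:Cambridge}); this is not something you can reduce to the generators of $S$. The saturated class generated by $(\{0\}\hookrightarrow\Delta[1])\boxprod\Bdry$ is the class of left anodyne maps and the one generated by $(\{1\}\hookrightarrow\Delta[1])\boxprod\Bdry$ is the class of right anodyne maps, so together they contain every horn inclusion $\Lambda^{k}[n]\hookrightarrow\Delta[n]$. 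Hence every fibrant object of the model structure you build is a Kan complex, and the containment $\Lambda_{\Delta[1]}(S)\subseteq \ol{S}$ you aim for is false no matter how the shuffle decomposition of $(\Lambda^{i,j}[n]\hookrightarrow\Delta[n])\boxprod(\{\e\}\hookrightarrow\Delta[1])$ turns out: for instance $\Delta[1]$ itself is a quasi-category, hence a quasi-2-Segal set, but the outer horn $\Lambda^{0}[2]\to\Delta[1]$ sending the edge $0\to 1$ to $0\to 1$ and the edge $0\to 2$ to the degenerate edge at $0$ has no filler, so $\Lambda^{0}[2]\hookrightarrow\Delta[2]$ lies in $\Lambda_{\Delta[1]}(S)$ but not in $\ol{S}$. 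This is exactly why the Joyal model structure cannot be produced with the cylinder $\Delta[1]$; one must use $J$, or, in the framework of \cite{Feller:generalizing} that the paper actually invokes, build the $J$-augmented horn inclusions into the fibrancy condition from the start.

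Repairing this forces precisely the work your proposal omits, which is also the content of parts (1) and (2) of the statement that you do not address at all. With the interval $J$, the machinery of \cite{Feller:generalizing} yields a model structure whose fibrant objects are the simplicial sets lifting against $\TwoSegHorn$ \emph{and} against all $J$-augmented horn inclusions, once one verifies the stability hypothesis $\TwoSegHorn\boxprod(\partial\Delta[1]\hookrightarrow\Delta[1])\subseteq\ol{\TwoSegHorn}$. The paper obtains the stronger statement $\ol{\TwoSegHorn}\boxprod\Mono\subseteq\ol{\TwoSegHorn}$ (Corollary~\ref{cor:2seganodyneclosedunderpushoutprod}) not by a direct prism analysis but by first proving that $(\Lambda^{0,2}[3]\hookrightarrow\Delta[3])\boxprod\Bdry$ and $(\Lambda^{1,3}[3]\hookrightarrow\Delta[3])\boxprod\Bdry$ generate the 2-Segal anodyne class (Section~\ref{sec:pushoutprod}, via the path space criterion, pushout-joins, and explicit retracts); that generation statement is what gives part (1), i.e., Corollary~\ref{cor:contractiblespace}. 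One must then show separately that every quasi-2-Segal set already has fillers of $J$-augmented horns, which is Lemma~\ref{lem:invertinghomotopy} and Proposition~\ref{prop:augmentedhornlifting} (part (2)), proved by an induction that manufactures inverse homotopies from generalized 2-Segal horn fillers. Only with both ingredients in hand does the identification of the fibrant objects with the quasi-2-Segal sets, and hence part (3) as in Theorem~\ref{thm:modelstructure}, go through.
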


In particular, we see that for $S^{\operatorname{set}}_{\bullet}(\Ab)$, the choices of fillers of horns of the form $\Lambda^{0,2}[3]$ and $\Lambda^{1,3}[3]$ are indeed unique up to homotopy in the sense we would hope for.

An important result from 2-Segal theory, proved independently in \cite[Thm.~6.3.2]{DK} and \cite[Thm.~4.10]{GKT:partI}, is the path space criterion, which says that a simplicial space is 2-Segal if and only if its path spaces are Segal. We prove the analogous criterion for quasi-2-Segal sets.

\vspace{2mm}

\noindent\textbf{Theorem \ref{thm:pathspacecriterion}} (Path space criterion)\textbf{.} \emph{A simplicial set is a quasi-2-Segal set if and only if its path spaces are quasi-categories.}

\vspace{2mm}

In particular, this criterion tells us that one only needs to check for fillers of horns of the form $\Lambda^{0,j}[n]$ for $2\leq j\leq n$ and $\Lambda^{i,n}[n]$ for $0\leq i\leq n-2$ to see if a simplicial set is a quasi-2-Segal set. Together with the path space criterion from 2-Segal spaces, it also unlocks a way to get examples of quasi-2-Segal sets as discrete versions of 2-Segal spaces, similar to how $S^{\operatorname{set}}_{\bullet}(\Ab)$ is the discrete version of $S_{\bullet}(\Ab)$.

Our final main result recreates the equivalence between quasi-categories and complete Segal spaces proved in \cite{JT}. To do so, we first define a notion of completeness for 2-Segal spaces. Recall that a Segal space $W$ is complete if a certain subspace $W_{hoequiv}\cin W_1$ of homotopy equivalences is weakly equivalent to $W_0$. However, for Segal spaces, this condition turns out to be equivalent to being local with respect to a certain inclusion $p_1^{\ast}(\{0\}\hookrightarrow J)$. We choose this latter condition as our definition for completeness of 2-Segal spaces. By localizing the model structure for 2-Segal spaces at $p_1^{\ast}(\{0\}\hookrightarrow J)$, we get a model structure on simplicial spaces where the fibrant objects are precisely the complete 2-Segal spaces, which we show is Quillen equivalent to the model structure for quasi-2-Segal sets.

\vspace{2mm}

\noindent\textbf{Theorem \ref{thm:quillenequiv}.} \emph{The quasi-2-Segal set model structure is Quillen equivalent to the model structure for complete 2-Segal spaces via the same adjunctions as for quasi-categories and complete Segal spaces.}

\vspace{2mm}

In the same way that the Quillen equivalences from \cite{JT} witness complete Segal spaces and quasi-categories as equivalent models of $(\infty,1)$-categories, we can interpret our result as saying that complete 2-Segal spaces and quasi-2-Segal sets are equivalent models of $(\infty,1)$-multi-valued categories. We hope that further work in 2-Segal theory might benefit from the interplay between the two models.

Our work has just scratched the surface in terms of generalizing the vast theory of quasi-categories. As just one example of possible future work, it would be nice to have a 2-Segal version of Quillen's Theorem A, generalizing the one for quasi-categories \cite[Thm.~4.1.3.1]{Lurie:HTT}. But we also expect that the path space criterion and edgewise subdivision criterion, which say that a simplicial set is quasi-2-Segal if and only if certain constructions are quasi-categories, will allow for a direct application of quasi-category theory in some situations. One avenue for future work, attempting to generalize our results to the $d$-Segal case for $d>2$, turns out to be somewhat of a dead end for reasons we explain in Section \ref{sec:highersegal}.

\subsection{Organization}

In Section \ref{sec:background}, we give a broad overview of the necessary background for the paper. In Section \ref{sec:2segalhorns}, we define 2-Segal horns and quasi-2-Segal sets and show that one arrives naturally at our definition of 2-Segal horns by beginning the ``2-Segal spines'' which define 2-Segal spaces. In Section \ref{sec:pathspacecriterion}, we prove for quasi-2-Segal sets a version of the path space criterion for 2-Segal spaces. In Section \ref{sec:pushoutprod}, we prove some technical results about pushout-products and pushout-joins of 2-Segal horns which yield the characterization of quasi-2-Segal sets in terms of having a contractible space of fillers of horns of the form $\Lambda^{0,2}[3]$ and $\Lambda^{1,3}[3]$. In Section \ref{sec:modelstructure}, we show that quasi-2-Segal sets satisfy a version of the special horn lifting property from quasi-categories, and we show that there is a model structure for quasi-2-Segal sets. In Section \ref{sec:vs} we define a notion of completeness for 2-Segal spaces, and we show that the complete 2-Segal space model structure is Quillen equivalent to the model structure for quasi-2-Segal sets. We conclude with Section \ref{sec:highersegal}, which is a brief remark about the issues with extending this theory to $d$-Segal objects for $d>2$.

\subsection{Acknowledgements}

I would like to thank Julie Bergner for suggesting this topic and for her valuable feedback on the write-up, Tim Campion for pointing out that left and right inverses of 1-simplices often do not agree in $S_{\bullet}(\Sp)$, Nima Rasekh for his comments on an earlier draft, and Walker Stern for a number of helpful conversations and for teaching a wonderful course on quasi-categories which inspired some of the results proved here. I am also grateful to the referee for their incisive feedback and thoughtful suggestions.

\section{Background}\label{sec:background}

\subsection{Simplicial sets} We give a brief overview of our terminology and notation for simplicial sets; see \cite[\S 1.2]{Cisinski:Cambridge} or \cite{Rezk} for a more thorough treatment.

The \emph{simplex category $\Delta$} is the category with objects the finite linearly ordered sets
\[
[n]=\{0\leq 1\leq \ldots\leq n\}\text{ for each }n\geq 0,
\]
whose morphisms are monotone maps. A \emph{simplicial set} is a functor $\Delta^{\op}\to \Set$. We denote the category of simplicial sets by $\sSet$. We denote by $\Delta[n]$ the standard $n$-simplex, which is the simplicial set corresponding to $[n]$ via the Yoneda embedding $\Delta\hookrightarrow \sSet$.

Given a simplicial set $X\colon \Delta^{\op}\to \Set$, we denote by $X_n$ the set to which $X$ maps $[n]$, and we call an element in $X_n$ an \emph{$n$-simplex} of $X$. We often refer to a 0-simplex as a \emph{vertex} and a 1-simplex as an \emph{edge}. By the Yoneda Lemma, an $n$-simplex in $X$ is equivalently a map of simplicial sets $\Delta[n]\to X$.

The injective morphisms in $\Delta$ are generated by co-face maps $d^i\colon [n-1]\hookrightarrow [n]$, which give us the \emph{face maps} $d_i\colon X_n\to X_{n-1}$ of a simplicial set $X$ for each $n\geq 1$ and $0\leq i\leq n$. The \emph{boundary} of the standard $n$-simplex $\Delta$ is the union of all of its faces, denoted by $\partial\Delta[n]$.

The surjective morphisms in $\Delta$ are generated by co-degeneracy maps $s^i\colon [n+1]\to [n]$, which give us the \emph{degeneracy maps} $s_i\colon X_n\to X_{n+1}$ of a simplicial set $X$ for each $n\geq 0$ and $0\leq i\leq n$. An $n$-simplex is \emph{degenerate} if it is in the image of a degeneracy map, and is \emph{non-degenerate} otherwise.

We say that a simplicial set $Y$ is a \emph{subcomplex} of $X$ if $Y_n\cin X_n$, and these inclusions form a simplicial map $X\to Y$. Given a simplicial set $X$ and a set of vertices $\nu\cin X$, the \emph{full subcomplex of $X$ on $\nu$} is the subcomplex $Z\cin X$ consisting of the simplices of $X$ whose vertices are all in $\nu$.

Given two simplicial sets $X$ and $Y$, there is a simplicial set $\Map(X,Y)$ where
\[
(\Map(X,Y))_n=\Hom(X\times \Delta[n], Y).
\]

\subsection{The nerve functor}

Denote by $\Cat$ the category of small categories. The \emph{nerve functor} is a fully faithful embedding $N\colon \Cat\hookrightarrow \sSet$; see \cite[\S 1.4]{Cisinski:Cambridge}. For example, the nerve of the category $0\to 1\to \ldots \to n$ is the standard $n$-simplex $\Delta[n]$. The nerve functor has a left adjoint which we denote by $\tau_1$; see \cite{Rezk} for an explicit description of this functor (the ``fundamental category'' functor, there denoted by $h$).

Denote by $\mathbb{I}$ the free-living isomorphism, i.e., the category with two objects and exactly one morphism in each hom-set. We let $J=N(\mathbb{I})$, and denote its two vertices by 0 and 1.

\subsection{Lifting properties and saturated classes}\label{sub:lifts}

We say that a map $g\colon X\to Y$ has the \emph{right lifting property with respect to} $f\colon A\to B$, denoted by $f\lifts g$, if for all commutative squares
\[
\begin{tikzcd}
{A} \arrow[d, "f"'] \arrow[r, "u"]                      & {X} \arrow[d, "g"] \\
{B} \arrow[r, "v"'] \arrow[ru, "\exists \ell"', dotted] & {Y} \nospace{,}             
\end{tikzcd}
\]
there exists a \emph{lift}, i.e., a dotted arrow $\ell$ making each triangle commute. More generally, we say that a map $g$ has the \emph{right lifting property with respect to} a class of maps $\mathcal{B}$
if $f\lifts g$ for all $f$ in $\mathcal{B}$.


A class of morphisms is \emph{saturated} if it is closed under taking isomorphisms, pushouts, transfinite compositions, and retracts. Given a class of morphisms $\mcB$, we can take its \emph{saturated closure} $\ol{\mcB}$. If a class $\mcB'$ equals $\ol{\mcB}$, we say that $\mcB'$ is \emph{generated by $\mcB$}. For example, the set of \emph{boundary inclusions} $\Bdry=\{\partial\Delta[n]\to\Delta[n]\}_{n\geq 0}$ generates the class $\Mono$ of monomorphisms in $\sSet$. A map has the right lifting property with respect to a set $\mcS$ if and only if it has the right lifting property with respect to the class $\ol{\mcS}$.
For any classes of maps $\mcS$ and $\mcT$, the containment $\mcS\cin \ol{\mcT}$ implies $\ol{\mcS}\cin \ol{\mcT}$. See \cite{Rezk} or \cite[\S 2.1]{Cisinski:Cambridge} for more details about lifting properties and saturated classes.

\subsection{Model structures}\label{sub:modelstructures}

A \emph{model structure} on a complete and cocomplete category consists of a choice of three classes of morphisms, the \emph{cofibrations}, the \emph{fibrations}, and the \emph{weak equivalences}, subject to certain axioms; see \cite[Def.~2.2.1]{Cisinski:Cambridge} or \cite{Hovey}. We say that a morphism which is both a (co)fibration and a weak equivalence is a \emph{trivial (co)fibration}. The \emph{fibrant objects} are those such that the map to the terminal object is a fibration, and the \emph{cofibrant objects} are those such that the map from the initial object is a cofibration.

Instead of describing the axioms of a model category in general, let us only state the main properties we use in this paper.

\begin{enumerate}
    \item The class of weak equivalences satisfies the 2-out-of-3 property: if two of $f$, $g$, and $gf$ are weak equivalence, then so is the third.
    \item The class of trivial cofibrations forms a saturated class.
    \item \cite[Prop.~E.1.10]{Joyal:theory} A model structure is determined by its class of cofibrations together with its class of fibrant objects.
\end{enumerate}

We restrict our focus to \emph{Cisinski model structures}, which are cofibrantly generated model structures on a presheaf category whose cofibrations are precisely the monomorphisms. A model structure is \emph{cofibrantly generated} if there are sets $\mathcal{I}$ and $\mathcal{J}$ such that $\mathcal{I}$ generates the cofibrations and $\mathcal{J}$ generates the trivial cofibrations in the sense of Subsection \ref{sub:lifts}; see \cite[2.4.1]{Cisinski:Cambridge}. Given two Cisinski model structures $\mathcal{M}$ and $\mathcal{M}'$ on $\sSet$ whose classes of weak equivalences are $W$ and $W'$ respectively, we say that $\mathcal{M}'$ is a \emph{localization of $\mathcal{M}$} if $W'\supseteq W$. Given a Cisinski model structure $\mathcal{M}$ and a set of morphisms $S$, the \emph{localization of $\mathcal{M}$ at $S$} is the model structure with the smallest class of weak equivalences which contains $S$ as well as the weak equivalences of $\mathcal{M}$. By fact (3) above, a Cisinski model structure is uniquely determined by its class of fibrant objects.

\subsection{Horns and Kan complexes}

Given $n\geq 1$ and $0\leq i\leq n$, the union of every face of $\Delta[n]$ except $d_i$ is a \emph{horn}, denoted by $\Lambda^i[n]$. More generally, given some proper subset $S\cin \{0,1,\ldots,n\}$, a \emph{generalized horn} $\Lambda^{S}[n]$ is the union of every face $d_i\Delta[n]$ for $i\in S$. Note that this notation is ``additive,'' in the sense that the $S$ in $\Lambda^S[n]$ tells us which faces are included instead of excluded. This notation is sometimes convenient, but it is also often convenient to denote by $\Lambda^{i_1,i_2,\ldots,i_r}[n]$ the generalized horn missing the faces $d_{i_1},\ldots, d_{i_r}$.

A simplicial set $X$ is a \emph{Kan complex} if the map $X\to \ast$ has the right lifting property with respect to the set of all horn inclusions. We think of a Kan complex as a simplicial set which behaves like a topological space. A way to make this statement more precise is via the Kan-Quillen model structure on $\sSet$ \cite{Quillen}, which is a Cisinski model structure whose fibrant objects are the Kan complexes. One can show that this model structure is equivalent to a model structure on the category of topological spaces.

\subsection{Inner horns and quasi-categories}

We say that a face $d_i\Delta[n]$ is \emph{inner} if $0<i<n$ and \emph{outer} if $i=0$ or $i=n$. Similarly, a horn $\Lambda^i[n]$ is \emph{inner} if $0<i<n$ and outer if $i=0$ or $i=n$. If, for a set $S\cin \{0,1,\ldots,n\}$, there exist $i<j<k$ such that $i,k\in S$ and $j\not\in S$, then the generalized horn $\Lambda^{S}[n]$ is a \emph{generalized inner horn}. We denote the set of all inner horn inclusions by $\InnHorn$, and the set of all generalized inner horn inclusions by $\GenInnHorn$. One can show that $\ol{\InnHorn}=\ol{\GenInnHorn}$; see an appendix in \cite{Rezk} for a proof.

A simplicial set $X$ is a \emph{quasi-category} if $X\to \ast$ has the right lifting property with respect to $\InnHorn$. We think of the 0-simplices of a quasi-category as objects and the 1-simplices as morphisms. There is a Cisinski model structure on $\sSet$ due to Joyal \cite{Joyal:theory} whose fibrant objects are the quasi-categories.

Every quasi-category has the \emph{special outer horn lifting property} as shown by Joyal \cite{Joyal:published}, which says that a quasi-category $X$ does have fillers of outer horns $\Lambda^0[n]\to X$ (resp.~$\Lambda^n[n]$) as long as the $0\to 1$ edge (resp.~$(n-1)\to n$ edge) is sent to an invertible morphism.

\subsection{Simplicial spaces}

A \emph{simplicial space} is a functor $W\colon \Delta^{\op}\to \sSet$. We denote by $\ssSet$ the category of simplicial spaces. Since each $W_n$ for $n\geq 0$ is a simplicial set, we can view a simplicial space as a grid of sets, where the $n$th column is the simplicial set $W_n$. There is a functor $p_1^{\ast}\colon \sSet\to \ssSet$ where $(p_1^{\ast}(X))_n$ is the discrete simplicial set corresponding to the set $X_n$; in other words, $p_1^{\ast}$ gives us a horizontal embedding of $\sSet$ into $\ssSet$. Similarly, there is a vertical embedding $p_2^{\ast}$, and given two simplicial spaces $W$ and $Z$, there is a simplicial set $\Map(W,Z)$ given by $(\Map(W,Z))_n =\Hom(W\times p_2^{\ast}(\Delta[n]),Z)_n$. In particular, $\Map(p_1^{\ast}(\Delta[n]),W)\cong W_n$ for every $n\geq 0$.

There is a model structure on $\ssSet$ which we call the \emph{vertical Reedy model structure with respect to the Kan-Quillen model structure}, or just the \emph{vertical Reedy model structure}, whose fibrant objects are those such that for every $n\geq 0$, the map
\[\Map(p_1^{\ast}(\Delta[n]),W)\to \Map(p_1^{\ast}(\partial\Delta[n]),W)\] induced by the boundary inclusion is a Kan-Quillen fibration. If $W$ is a vertical Reedy fibrant simplicial space, then in particular $W_n$ is a Kan complex for every $n\geq 0$. We use the word ``vertical'' here because we follow the convention of \cite{JT} and picture the simplicial sets $W_n$ as the columns of the simplicial space $W$.

We say a simplicial space $W$ is \emph{local} with respect to a map $f\colon Z\to Z'$ if the induced map $\Map(Z',W)\to \Map(Z,W)$ is a Kan-Quillen weak equivalence. Given a set of maps $S$, the fibrant objects in the vertical Reedy fibrant model structure localized at $S$ is precisely the Reedy fibrant simplicial spaces which are local with respect to $S$. See \cite{Hirschhorn} for more about localizations of this sort.

\subsection{Segal objects and spines}

Given $n\geq 2$, we say that the union of the edges $i\to (i+1)$ in $\Delta[n]$ is its \emph{spine}, denoted by $Sp[n]$. The inclusion $Sp[n]\hookrightarrow \Delta[n]$ is a \emph{spine inclusion}. A simplicial set $X$ is isomorphic to the nerve of a category precisely if it has unique fillers of spine inclusions. A homotopical version of this idea yields the notion of a \emph{Segal space}, which is a vertical Reedy fibrant simplicial space $W$ such that each induced map $\Map(p_1^{\ast}(\Delta[n]),W)\to \Map(p_1^{\ast}(Sp[n]),W)$ is a Kan-Quillen weak equivalence. In other words, a Segal space is a fibrant object in the localization of vertical Reedy model structure at the horizontal embedding of the set of spine inclusions.

We say that a Segal space is \emph{complete} if it is local with respect to the map $p_1^{\ast}(\{0\}\hookrightarrow J)$. (Note that this definition is equivalent to, but not precisely the same as, the original given in \cite{Rezk:CSS}).) Complete Segal spaces give an alternative notion of $(\infty,1)$-category to that of quasi-categories, although the two are equivalent as shown in \cite{JT}.

\subsection{2-Segal objects and 2-Segal spines}\label{sub:2segspine}

We can define a two-dimensional analogue of spines, which we call \emph{2-Segal spines}. Given $n\geq 3$, a 2-Segal spine $\mcT\cin \Delta[n]$ is a union of 2-simplices of $\Delta[n]$ which gives a triangulation of the $(n+1)$-gon formed by the edges $0\to n$ and $i\to i+1$ for each $0\leq i<n$. For example, here are two 2-Segal spines inside of $\Delta[5]$:




\[
\begin{tikzpicture}
\coordinate[label=below:$0$] (0) at (-1,0);
\coordinate[label=below:$1$] (1) at (0.25,0);
\coordinate[label=right:$2$] (2) at (0.75,1);
\coordinate[label=above:$3$] (3) at (0.25,2);
\coordinate[label=above:$4$] (4) at (-1,2);
\coordinate[label=left:$5$] (5) at (-1.5,1);
\coordinate[label=left:$\mcT\colon$] (l) at (-2,2);
\begin{scope}[decoration={markings,mark=at position 0.55 with {\arrow{>}}}] 
\draw[postaction={decorate}] (0) -- node[auto] {} (1);
\draw[postaction={decorate}] (1) -- node[auto] {} (2);
\draw[postaction={decorate}] (2) -- node[auto] {} (3);
\draw[postaction={decorate}] (3) -- node[auto] {} (4);
\draw[postaction={decorate}] (4) -- node[auto] {} (5);
\draw[postaction={decorate}] (0) -- node[auto] {} (2);
\draw[postaction={decorate}] (0) -- node[auto] {} (3);
\draw[postaction={decorate}] (0) -- node[auto] {} (4);
\draw[postaction={decorate}] (0) -- node[auto] {} (5);
\end{scope}
\path[fill=lightgray,fill opacity=.3] (0)--(1)--(2)--cycle;
\path[fill=green,fill opacity=.1] (0)--(2)--(3)--cycle;
\path[fill=red,fill opacity=.1] (0)--(3)--(4)--cycle;
\path[fill=blue,fill opacity=.1] (0)--(4)--(5)--cycle;
\end{tikzpicture}
\hspace{.2in}
\begin{tikzpicture}
\coordinate[label=below:$0$] (0) at (-1,0);
\coordinate[label=below:$1$] (1) at (0.25,0);
\coordinate[label=right:$2$] (2) at (0.75,1);
\coordinate[label=above:$3$] (3) at (0.25,2);
\coordinate[label=above:$4$] (4) at (-1,2);
\coordinate[label=left:$5$] (5) at (-1.5,1);
\coordinate[label=left:$\mcT'\colon$] (l) at (-2,2);
\begin{scope}[decoration={markings,mark=at position 0.55 with {\arrow{>}}}] 
\draw[postaction={decorate}] (0) -- node[auto] {} (1);
\draw[postaction={decorate}] (1) -- node[auto] {} (2);
\draw[postaction={decorate}] (2) -- node[auto] {} (3);
\draw[postaction={decorate}] (3) -- node[auto] {} (4);
\draw[postaction={decorate}] (4) -- node[auto] {} (5);
\draw[postaction={decorate}] (0) -- node[auto] {} (2);
\draw[postaction={decorate}] (2) -- node[auto] {} (4);
\draw[postaction={decorate}] (0) -- node[auto] {} (4);
\draw[postaction={decorate}] (0) -- node[auto] {} (5);
\end{scope}
\path[fill=lightgray,fill opacity=.3] (0)--(1)--(2)--cycle;
\path[fill=green,fill opacity=.1] (2)--(3)--(4)--cycle;
\path[fill=red,fill opacity=.1] (0)--(2)--(4)--cycle;
\path[fill=blue,fill opacity=.1] (0)--(4)--(5)--cycle;
\end{tikzpicture}.
\]
A \emph{2-Segal set} is a simplicial set with unique fillers of all 2-Segal spine inclusions. We can think of a 2-Segal set as being like a weak version of a category, where we view the 0-simplices as objects, the 1-simplices as morphisms, and the 2-simplices as witnessing composition of morphisms, except that the composite of two morphisms $x\to y\to z$ need not be unique or even defined. Having unique fillers of 2-Segal spine extensions implies that this weak notion of composition is still associative in a certain sense, given by the correspondence between the two 2-Segal spines of $\Delta[3]$:




\[
\begin{tikzpicture}[node distance=2cm]
\coordinate[label=right:$2$] (c) {};
\coordinate[left=of c,label=left:$3$] (d) {};
\coordinate[left=of c, label=left:$\mcT\colon\quad$] (l) {};
\coordinate[below=of d,label=left:$0$] (a) {};
\coordinate[right=of a,label=right:$1$] (b) {};
\begin{scope}[decoration={markings,mark=at position 0.5 with {\arrow{>}}}] 
\draw[postaction={decorate}] (c) -- node[auto] {} (d);
\draw[postaction={decorate}] (a) -- node[auto] {} (d);
\draw[postaction={decorate}] (a) -- node[auto] {} (b);
\draw[postaction={decorate}] (b) -- node[auto] {} (c);
\draw[postaction={decorate}] (a) -- node[auto] {} (c);
\end{scope}
\path[fill=red,fill opacity=.1] (a)--(c)--(d)--cycle;
\path[fill=lightgray,fill opacity=.3] (a)--(c)--(b)--cycle;
\end{tikzpicture}
\hspace{.3in}
\begin{tikzpicture}[node distance=2cm]
\coordinate[label=right:$2$] (c) {};
\coordinate[left=of c,label=left:$3$] (d) {};
\coordinate[left=of c, label=left:$\mcT'\colon\quad$] (l) {};
\coordinate[below=of d,label=left:$0$] (a) {};
\coordinate[right=of a,label=right:$1$] (b) {};
\begin{scope}[decoration={markings,mark=at position 0.5 with {\arrow{>}}}] 
\draw[postaction={decorate}] (c) -- node[auto] {} (d);
\draw[postaction={decorate}] (a) -- node[auto] {} (d);
\draw[postaction={decorate}] (a) -- node[auto] {} (b);
\draw[postaction={decorate}] (b) -- node[auto] {} (c);
\draw[postaction={decorate}] (b) -- node[auto] {} (d);
\end{scope}
\path[fill=blue,fill opacity=.1] (b)--(c)--(d)--cycle;
\path[fill=green,fill opacity=.1] (a)--(b)--(d)--cycle;
\end{tikzpicture}.
\]

A \emph{2-Segal space}, first defined in \cite{DK}, is a vertical Reedy fibrant simplicial space which is local with respect to $p_1^{\ast}(\mcT\hookrightarrow \Delta[n])$ for every 2-Segal spine inclusion $\mcT\hookrightarrow \Delta[n]$. An equivalent notion of \emph{decomposition spaces} was defined in \cite{GKT:partI} (one part of a series of six papers), although the condition is stated very differently. In particular, it was only observed relatively recently in \cite{FGKPW} that a certain condition called unitality, which is built into the definition of decomposition spaces and was often assumed as an extra axiom for 2-Segal spaces, is actually automatic from the 2-Segal condition.

In general, Segal implies 2-Segal; every Segal space is a 2-Segal space and the nerve of a category (a ``Segal set'') is a 2-Segal set.

\subsection{Pushout-products and pushout-joins}

One can define pushout-products of arbitrary morphisms in a monoidal category, but for our purposes it suffices to consider monomorphisms in the monoidal category $(\sSet,\times)$. Given monomorphisms $A\hookrightarrow B$ and $C\hookrightarrow D$ of simplicial sets, the monomorphism
\[
(D\times A)\cup(C\times B)\hookrightarrow D\times B
\]
is called the \emph{pushout-product} of $A\hookrightarrow B$ and $C\hookrightarrow D$, denoted by $(A\hookrightarrow B)\boxprod (C\hookrightarrow D)$. Given two classes $\mcS$ and $\mcT$ of maps, we denote by $\mcS\boxprod\mcT$ the class of maps of the form $f\boxprod g$ for $f$ in $\mcS$ and $g$ in $\mcT$. Given morphisms $A\hookrightarrow B$, $C\hookrightarrow D$, and $X\to Y$, there is a correspondence of lifting problems




\[\begin{tikzcd}[column sep=small]
	{(A\times D)\cup (B\times C)} & X &&& C & {\Map(B,X)} \\
	{B\times D} & Y & {} & {} & D & {\Map(A,X)\times_{\Map(A,Y)}\Map(B,Y)\nospace{,}}
	\arrow[from=2-1, to=2-2]
	\arrow[from=1-2, to=2-2]
	\arrow[from=1-1, to=1-2]
	\arrow[from=1-1, to=2-1]
	\arrow[dotted, from=2-1, to=1-2]
	\arrow[shift left=5, squiggly, tail reversed, from=2-3, to=2-4]
	\arrow[from=1-6, to=2-6]
	\arrow[from=1-5, to=1-6]
	\arrow[from=2-5, to=2-6]
	\arrow[from=1-5, to=2-5]
	\arrow[dotted, from=2-5, to=1-6]
\end{tikzcd}\]
and a correspondence of lifts given by the adjoint correspondence $\Hom(B\times D,X)\cong \Hom(D,\Map(B,X))$.

Given two simplicial sets $X$ and $Y$, we define their \emph{join} $X\star Y$ by $(X\star Y)_n=\coprod_{i+j=n-1} (X_i\times Y_j)$ (where $-1\leq i\leq n$ and we consider $X_{-1}$ and $Y_{-1}$ to be one point sets). The $d_{\ell}$ and $s_{\ell}$ maps out of $(X\star Y)_n$ correspond to $d_i$ and $s_i$ of $X_i$ if $\ell \leq i$ and correspond to $d_{\ell-i-1}$ and $s_{\ell-i-1}$ of $Y_j$ if $\ell\geq i+1$. For example, the join $\Delta[0]\star X$ contains a copy of $\Delta[0]$, a copy of $X$, and an $(n+1)$-simplex $\sigma'$ corresponding to each $n$-simplex $\sigma$ of $X$ where the 0th vertex of $\sigma'$ is $\Delta[0]$ and $d_0 \sigma'=\sigma$. Given $n,k\geq 0$, we have $\Delta[n]\star \Delta[k]\cong \Delta[n+k+1]$, where $\Delta[n]$ includes as the full subcomplex on $\{0,1,\ldots, n\}$ and $\Delta[k]$ includes as the full subcomplex on $\{n+1,n+2,\ldots,n+k+1\}$.

Given monomorphisms $A\hookrightarrow B$ and $C\hookrightarrow D$ of simplicial sets, the monomorphism
\[
(D\star A)\cup(C\star B)\hookrightarrow D\star B
\]
is called the \emph{pushout-join} of $A\hookrightarrow B$ and $C\hookrightarrow D$, which we denote by $(A\hookrightarrow B)\star (C\hookrightarrow D)$.

For a detailed overview of pushout-products and joins, see \cite{Rezk}.

\section{2-Segal horns}\label{sec:2segalhorns}

In this section, we define 2-Segal horns and show that they are necessarily weak equivalences in any model structure on $\sSet$ where the 2-Segal spine extensions are weak equivalences. We define quasi-2-Segal sets to be simplicial sets with fillers of 2-Segal horn inclusions, in analogy with quasi-categories.

\subsection{Fundamental definitions}

Recall that an ordinary horn $\Lambda^i[n]$ is the union of all but the $d_i$ face of a simplex $\Delta[n]$, and that a horn $\Lambda^i[n]$ is an inner horn if $0<i<n$. We define a 2-Segal horn to be the union of all but two faces of a simplex, where we need a slightly more complicated condition on which faces are missing, using the following definition.
\begin{defn}\label{def:broken}
Given $n\geq 3$, a subset $S\cin \{0,\ldots,n\}$ is \emph{broken} if there exist $0\leq i<j<k<\ell\leq n$ such that either $i$ and $k$ are in $S$ and $j$ and $\ell$ are not, or vice versa.
\end{defn}
The idea behind Definition \ref{def:broken} is that a subset $S\cin \{0,\ldots,n\}$ is broken if it has a gap modulo $n+1$, as illustrated in the following example. Note that a subset $S\cin \{0,1,\ldots n\}$ is broken if and only if its complement $\{0,1,\ldots n\}\smallsetminus S$ is broken in $\{0,1,\ldots n\}$.

\begin{ex}
Figure \ref{fig:broken} shows two examples of subsets $S\cin \{0,1,\ldots,7\}$, where in (a) $S$ is not broken but in (b) it is.
\begin{figure}[h]
\caption{\quad \quad \quad\quad\quad\quad}
\label{fig:broken}
\vspace{3mm}
\adjustbox{scale=1}{




\begin{tikzcd}[row sep=tiny, column sep=tiny]
	& 5 & 4 &&&&& {\color{red}{\mathbf{(5)}}} & {\color{red}{\mathbf{(4)}}} \\
	{\color{red}{\mathbf{(6)}}} &&& 3 &\hspace{3mm}&& 6 &&& {\color{red}{\mathbf{(3)}}} \\
	{\color{red}{\mathbf{(7)}}} &&& 2 &&& 7 &&& 2 \\
	& {\color{red}{\mathbf{(0)}}} & {\color{red}{\mathbf{(1)}}} &&&&& {\color{red}{\mathbf{(0)}}} & {\color{red}{\mathbf{(1)}}} \\
	& {} & {} &&&&& {} & {}
	\arrow["{\text{(a) Not Broken}}"{description}, draw=none, from=5-2, to=5-3]
	\arrow["{\text{(b) Broken}}"{description}, draw=none, from=5-8, to=5-9]
\end{tikzcd}
}
\end{figure}
\end{ex}

\begin{defn}
Given $n\geq 3$ and a broken subset $S\cin \{0,\ldots, n\}$, we say that the generalized horn $\Lambda^S[n]$ is a \emph{generalized 2-Segal horn}. If $\abs{S}=n-1$, then we say that $\Lambda^S[n]$ is a \emph{2-Segal horn}. An inclusion is \emph{2-Segal anodyne} if it is in the saturated class generated by the 2-Segal horn inclusions.
\end{defn}

Note that the condition $\abs{S}=n-1$ implies that a 2-Segal horn is the union of all but two of the faces of an $n$-simplex (which has a total of $n+1$ faces).

\begin{rmk}
Recall that we use two notations for generalized horns, depending on context. The notation $\Lambda^S[n]$, for $S\cin \{0,1,\ldots,n\}$, indicates that $\Lambda^S[n]$ is the union of the $d_{\ell}$ faces for $\ell\in S$, whereas the notation $\Lambda^{i_1,\ldots,i_k}[n]$ indicates that the $d_{i_1},\ldots,d_{i_k}$ faces are precisely the missing faces. It is therefore often more convenient to use the latter notation and denote 2-Segal horns by $\Lambda^{i,j}[n]$, for broken $\{i,j\}\cin \{0,1,\ldots n\}$.
\end{rmk}

Recall that the lowest dimensional inner horn $\Lambda^1[2]$ is precisely the 2-spine. We see that the analogous thing happens for 2-Segal horns in the following example.

\begin{ex}
The two broken subsets of $\{0,1,2,3\}$ are $\{0,2\}$ and $\{1,3\}$. The corresponding 2-Segal horns $\Lambda^{0,2}[3]$ and $\Lambda^{1,3}[3]$ are precisely the two triangulations of the square, our two 2-Segal spines from Subsection \ref{sub:2segspine}.
\end{ex}

Having defined 2-Segal horns as the 2-Segal analogue of inner horns, we define quasi-2-Segal sets to be simplicial sets with the corresponding filling condition, in analogy with quasi-categories which are defined in terms of having fillers of inner horns.

\begin{defn}\label{def:q2seg}
A simplicial set $X$ is a \emph{quasi-2-Segal set} if it has fillers of all 2-Segal horn inclusions.
\end{defn}

\begin{ex}
Every quasi-category is a quasi-2-Segal set, because every 2-Segal horn inclusion is a generalized inner horn inclusion. Thinking of quasi-categories as ``quasi-1-Segal sets,'' we recover the general fact that 1-Segal implies 2-Segal.
\end{ex}

\begin{ex}
In \cite{GKT:partI}, G\'alvez-Kock-Tonks define a 2-Segal groupoid $\mathbf{H}$ where $H_0$ is a point and $H_k$ for $k>1$ is the groupoid of forests of rooted trees with $k-1$ admissible cuts. Let $\mathbf{H}^{\operatorname{set}}$ be the simplicial set where $\mathbf{H}^{\operatorname{set}}_k$ is the set of objects of $H_k$ for each $k\geq 0$. Given a 2-Segal horn $\Lambda^{i,j}[k]\to \mathbf{H}^{\operatorname{set}}$, we can recover the data of a $k$-simplex as follows: each of the inner faces of the horn ($d_{\ell}$ for $0<\ell<k$ not equal to $i$ or $j$) specifies the underlying forest and $k-2$ of the cuts, while an outer face (either $d_0$ if $i>0$ or $d_n$ if $j<n$) specifies the remaining cut. Any remaining faces must agree with the cuts already specified in order to form the horn $\Lambda^{i,j}[k]\to \mathbf{H}^{\operatorname{set}}$ in the first place. We therefore see that $\mathbf{H}^{\operatorname{set}}$ is a quasi-2-Segal set.
\end{ex}

A primary goal of this paper is to justify these definitions by proving various fundamental results about 2-Segal horns and quasi-2-Segal sets. For example, we see in Section \ref{sec:pathspacecriterion} that another sensible definition in terms of path spaces turns out to be equivalent, and in Section \ref{sec:modelstructure} we see that there is an associated model structure on $\sSet$ in which quasi-2-Segal sets are precisely the fibrant objects. We devote the remainder of this section to showing that 2-Segal spine inclusions ``generate'' the 2-Segal horn inclusions, in the sense that assuming all 2-Segal spines in a Cisinski model structure on $\sSet$ are weak equivalences implies that all generalized 2-Segal horn inclusions must be weak equivalences as well. As a consequence, the model structure we construct in Section \ref{sec:modelstructure} is in fact the localization of the minimal model structure at the set of 2-Segal spine extensions.

\subsection{From 2-Segal spines to 2-Segal horns}

We begin by making a couple of observations about triangulations of the $(n+1)$-gon.

\begin{defn}
Given $n\geq 2$ and a triangulation $\mcT$ of the $(n+1)$-gon, we say that a vertex $0<i<n$ is \emph{extreme} if the triangle $(i-1)\to i\to (i+1)$ is in $\mcT$. We say that the vertex $0$ is extreme if the triangle $0\to 1\to n$ is in $\mcT$ and that the vertex $n$ is extreme if the triangle $0\to (n-1)\to n$ is in $\mcT$.
\end{defn}

\begin{lem}\label{lem:twoextreme}
Given $n\geq 2$, every triangulation $\mcT$ of the $(n+1)$-gon has at least two extreme vertices, at least one of which is not $0$ or $n$.
\end{lem}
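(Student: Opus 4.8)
The plan is to reinterpret an extreme vertex as an \emph{ear} of the triangulation, i.e.\ a triangle of $\mcT$ two of whose sides are boundary edges of the $(n+1)$-gon, and then argue in two stages: first that $\mcT$ has at least two ears, and second that these cannot both sit at the corner vertices $0$ and $n$.

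For the first stage I would use a short counting argument. A triangulation of the $(n+1)$-gon uses exactly $n-1$ triangles, and the polygon has $n+1$ boundary edges (the $n$ edges $i\to i+1$ together with $0\to n$), each of which lies in a unique triangle. Writing $e_j$ for the number of triangles having exactly $j$ of their sides on the boundary, and noting that for $n\geq 3$ no triangle can have all three sides on the boundary, the two relations $e_0+e_1+e_2=n-1$ and $e_1+2e_2=n+1$ subtract to give $e_2=e_0+2\geq 2$. Since a triangle with two boundary sides has them meeting at a single vertex, which is precisely an extreme vertex in the sense of the definition, this shows $\mcT$ has at least two extreme vertices. The base case $n=2$ is immediate: the only triangulation of the triangle $0\to 1\to 2$ makes all three vertices extreme, and $1$ is interior.

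The second, more delicate, stage is to produce an extreme vertex that is interior (neither $0$ nor $n$), which is where the real content of the lemma lies, since the count alone permits both ears to sit at $0$ and $n$. I would argue by contradiction: if every extreme vertex lay in $\{0,n\}$, then by the first stage there would be exactly two, namely $0$ and $n$, both extreme. By definition this means the triangles $0\to 1\to n$ and $0\to(n-1)\to n$ both belong to $\mcT$. Both contain the boundary edge $0\to n$, which as a boundary edge lies in a unique triangle of $\mcT$; hence these two triangles coincide, forcing $1=n-1$ and so $n=2$, contradicting $n\geq 3$.

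The main obstacle is thus not the existence of two ears, which is a standard Euler-type count, but the labelling refinement that at least one ear avoids the two corners adjacent to the edge $0\to n$; the uniqueness of the triangle on that boundary edge is the key fact that forces an interior ear whenever $n\geq 3$.
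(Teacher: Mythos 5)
Your proof is correct, but it takes a genuinely different route from the paper. The paper argues by induction on $n$: it locates the triangle $0\to i\to n$ containing the long edge $0\to n$, splits $\mcT$ into the two sub-triangulations on $\{0,\dots,i\}$ and $\{i,\dots,n\}$, and invokes the inductive hypothesis on each nontrivial piece to produce interior extreme vertices (with $0$ or $n$ supplying the second extreme vertex when a piece is trivial). You instead give a global Euler-type count --- the classical ``two ears'' argument --- identifying extreme vertices with triangles having two boundary sides and deriving $e_2=e_0+2\geq 2$ from the two incidence relations, and then a separate argument using the uniqueness of the triangle on the boundary edge $0\to n$ to rule out both ears sitting at $0$ and $n$ when $n\geq 3$. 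Both arguments are sound; one small point you leave implicit is that distinct two-boundary-side triangles have distinct apex vertices (immediate, since the ear triangle at a vertex $v$ is determined by $v$ and its two polygon neighbours, and each boundary edge lies in only one triangle). Your version is non-inductive and even gives the sharper identity $e_2=e_0+2$, while the paper's recursive decomposition has the advantage of matching the inductive structure reused in the surrounding lemmas (notably Lemma~\ref{lem:existstri} and the spine-to-horn factorizations), where the same splitting at the triangle on $0\to n$ reappears.
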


\begin{proof}
Let us proceed by induction on $n$. If $n=2$, then every vertex of $\mcT=\Delta[2]$ is extreme. Now suppose that for some $n>2$, the hypothesis holds for every triangulation of the $(k+1)$-gon for every $2\leq k< n$. Given a triangulation $\mcT$ of the $(n+1)$-gon, let $i$ be the vertex such that $0\to i\to n$ is a triangle in $\mcT$. Then we can subdivide $\mcT$ into three pieces
\[\begin{tikzcd}
	n \\
	&&& i \\
	0
	\arrow[from=3-1, to=2-4]
	\arrow[from=2-4, to=1-1]
	\arrow[from=3-1, to=1-1]
	\arrow["\mcT''", curve={height=30pt}, dotted, no head, from=2-4, to=1-1]
	\arrow["\mcT'", curve={height=30pt}, dotted, from=3-1, to=2-4]
\end{tikzcd}\]
where $\mcT'$ is trivial if $i=1$ and $\mcT''$ is trivial if $i=n-1$. If $\mcT'$ is nontrivial, then by induction it has at least one extreme vertex $0<j<i$, and hence that vertex is also extreme in $\mcT$. A similar argument shows that there is an extreme vertex $i<\ell<n$ if $\mcT''$ is nontrivial. In the case that $\mcT'$ is trivial because $i=1$, then $0$ itself is an extreme vertex of $\mcT$, and similarly $n$ is extreme in the case that $\mcT''$ is trivial because $i=n-1$. In any case, there are at least two extreme vertices, one of which is not $0$ or $n$, proving the claim.
\end{proof}

The above observation has the following converse of sorts, which says that any two non-adjacent vertices of the $(n+1)$-gon are extreme vertices of some triangulation.

\begin{lem}\label{lem:existstri}
Given $n\geq 3$ and $i<j$ such that $\{i,j\}\cin \{0,1,\ldots,n\}$ is broken, there exists some triangulation $\mcT$ of the $(n+1)$-gon in which the vertices $i$ and $j$ are extreme.
\end{lem}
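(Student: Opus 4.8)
The plan is to construct the desired triangulation $\mcT$ explicitly by simultaneously ``cutting off'' ears at $i$ and $j$. Throughout, I picture the vertices $0,1,\ldots,n$ arranged cyclically around the $(n+1)$-gon, where $0$ and $n$ are adjacent via the boundary edge $0\to n$. In this language the hypothesis that $\{i,j\}$ is broken says precisely that $i$ and $j$ are not cyclically adjacent, i.e.\ $j\geq i+2$ and $\{i,j\}\neq\{0,n\}$. For a vertex $k$, write $T_k$ for the triangle spanned by $k$ and its two cyclic neighbors; then $T_k$ is exactly the triangle whose presence in a triangulation makes $k$ extreme, the cases $k=0$ and $k=n$ recovering the triangles $0\to 1\to n$ and $0\to(n-1)\to n$ from the definition.

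First I would record that $T_i$ and $T_j$ have disjoint interiors. Because $i$ and $j$ are not cyclically adjacent, $i$ is not a neighbor of $j$ and vice versa, so neither of $T_i$, $T_j$ contains the opposite apex; in the convex $(n+1)$-gon two such ear-triangles then meet at most along a shared vertex (which occurs exactly when $j=i+2$). This non-adjacency is the one and only place the broken hypothesis enters, and it is the single point that needs genuine—if minor—care, since one must treat the wrap-around cases $i=0$ and $j=n$ uniformly with the generic case.

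Next I would remove both ears at once: deleting $i$ and $j$ and inserting the two diagonals joining their respective neighbors leaves a convex polygon $P$ on the $n-1$ vertices $\{0,1,\ldots,n\}\smallsetminus\{i,j\}$. The key observation here is that, since $i$ is not adjacent to $j$, removing the ear at $i$ does not alter the cyclic neighbors of $j$, so $T_j$ is still the ear at $j$ after the first cut (and symmetrically). When $n\geq 4$ the polygon $P$ has at least three vertices and hence admits some triangulation $\mcS$; when $n=3$ the set $\{0,1,\ldots,n\}\smallsetminus\{i,j\}$ has only two elements, $P$ degenerates to the single shared diagonal, and no further triangulation is needed (indeed $T_i\cup T_j$ already tiles the square).

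Finally I would set $\mcT=T_i\cup T_j\cup\mcS$. By interior-disjointness this partitions the $(n+1)$-gon, so it is a genuine triangulation, and by construction it contains both $T_i$ and $T_j$, making $i$ and $j$ extreme as required. The only real obstacle is the bookkeeping in the disjointness and neighbor-preservation step; one could instead sidestep it by an induction on $n$ in the spirit of Lemma~\ref{lem:twoextreme}, first peeling off the ear at $i$ and then triangulating the remaining $n$-gon by a fan based at a neighbor of $j$, which automatically makes $j$ an ear.
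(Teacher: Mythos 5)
Your proof is correct and follows essentially the same route as the paper: the paper likewise cuts off the ears at $i$ and $j$ by adding the diagonals $(i-1)\to(i+1)$ and $(j-1)\to(j+1)$ (with the cyclic adjustments when $i=0$ or $j=n$) and then triangulates the remaining $(n-1)$-gon arbitrarily, treating $n=3$ separately via the two triangulations of the square. Your extra care about interior-disjointness of the two ears is a fine elaboration of the same idea (only the parenthetical ``occurs exactly when $j=i+2$'' overlooks the cyclic case $i=0$, $j=n-1$ and the like, which is immaterial).
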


\begin{proof}
For $n=3$, we note that the two broken subsets of $\{0,1,2,3\}$ are $\{0,2\}$ and $\{1,3\}$, and that these are the extreme vertices in the triangulations of the square $\Lambda^{1,3}[3]$ and $\Lambda^{0,2}[3]$, respectively. For $n>3$, we begin with the $(n+1)$-gon and add the edge $(i-1)\to (i+1)$ (or $1\to n$ if $i=0$) as well as the edge $(j-1)\to (j+1)$ (or $0\to (n-1)$ if $j=n$). We can then triangulate the remaining $(n-1)$-gon however we like, resulting in a triangulation in which the edges $i$ and $j$ are extreme.
\end{proof}

We are now ready to begin proving statements relating 2-Segal spine extensions to generalized 2-Segal horns. The following few results, culminating in Proposition \ref{prop:generalizedhornsareweakequivs}, are essentially a 2-Segal analogue of \cite[Lem.~3.5]{JT}.

\begin{lem}
Given a Cisinski model structure $\mcM$ on $\sSet$, suppose that every 2-Segal spine extension is a weak equivalence. Then, given $n\geq 3$ and a triangulation $\mcT$ of the $(n+1)$-gon with extreme vertex $0\leq i\leq n$, the inclusion
\[
\begin{tikzcd}
\mcT\cup\Delta[n-1] \ar[r,hookrightarrow,"\mcT\cup d^i"] & \Delta[n]
\end{tikzcd}
\]
is also a weak equivalence.
\end{lem}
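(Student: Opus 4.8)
The plan is to factor the inclusion through $\mcT$ and to invoke the hypothesis twice: once via a pushout along a 2-Segal spine extension sitting inside the face $d_i\Delta[n]$, and once via the 2-out-of-3 property for weak equivalences. Throughout I identify $\Delta[n-1]$ with its image $d_i\Delta[n]$ under $d^i$, so that the map in question is $\mcT\cup d_i\Delta[n]\hookrightarrow\Delta[n]$.

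First I would isolate the combinatorics. Set $\mcT_i:=\mcT\cap d_i\Delta[n]$. Since $i$ is extreme, the unique triangle $\sigma$ of $\mcT$ containing $i$ is the ear at $i$ (that is, $(i-1)\to i\to (i+1)$, or $0\to1\to n$ if $i=0$, or $0\to(n-1)\to n$ if $i=n$), and $\sigma$ meets $d_i\Delta[n]$ in exactly its edge opposite $i$. Every other triangle of $\mcT$ avoids $i$ and hence already lies in $d_i\Delta[n]$. Deleting $\sigma$ thus exhibits $\mcT_i$ as a triangulation of the $n$-gon on $\{0,\ldots,n\}\smallsetminus\{i\}$; one checks in each of the three cases that the correct long edge is produced (namely $\min\to\max$ of the remaining vertices), so that $\mcT_i$ is a 2-Segal spine of $d_i\Delta[n]\cong\Delta[n-1]$. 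When $n=3$ this degenerates to $\mcT_i=d_i\Delta[n]$. I expect this identification to be the only delicate point, and would verify the three cases explicitly.

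Next I would feed this into a pushout. Because a union of subcomplexes is the pushout along their intersection, the square
\[
\begin{tikzcd}
\mcT_i \ar[r,hook] \ar[d,hook] & d_i\Delta[n] \ar[d,hook]\\
\mcT \ar[r,hook] & \mcT\cup d_i\Delta[n]
\end{tikzcd}
\]
is a pushout. By the previous step the top arrow is a 2-Segal spine extension (or an isomorphism when $n=3$), hence a weak equivalence by hypothesis; since it is a monomorphism, it is a trivial cofibration. As the trivial cofibrations of a model structure form a saturated class, their pushout $\mcT\hookrightarrow\mcT\cup d_i\Delta[n]$ is again a trivial cofibration, and in particular a weak equivalence.

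Finally I would close with 2-out-of-3. The composite $\mcT\hookrightarrow\mcT\cup d_i\Delta[n]\hookrightarrow\Delta[n]$ is exactly the 2-Segal spine extension $\mcT\hookrightarrow\Delta[n]$, a weak equivalence by hypothesis, and its first leg is a weak equivalence by the previous paragraph; hence its second leg $\mcT\cup d_i\Delta[n]\hookrightarrow\Delta[n]$ is a weak equivalence as well, which is the claim. The main obstacle is the geometric bookkeeping of the first step, identifying $\mcT_i$ as a 2-Segal spine of the face; everything after that is purely formal, using only saturation of trivial cofibrations and 2-out-of-3 from the Cisinski axioms.
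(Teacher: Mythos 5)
Your proposal is correct and follows essentially the same route as the paper: restrict the triangulation along $d^i$ to get a 2-Segal spine $\mcT'\hookrightarrow\Delta[n-1]$, observe that $\mcT\hookrightarrow\mcT\cup\Delta[n-1]$ is a pushout of this spine inclusion (hence a trivial cofibration), and conclude by 2-out-of-3 applied to the factorization of $\mcT\hookrightarrow\Delta[n]$. Your extra care with the ear at $i$ and the degenerate case $n=3$ is a welcome elaboration of a step the paper states without proof.
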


\begin{proof}
Because the vertex $i$ is extreme in $\mcT$, the triangulation restricts along $d^i\colon \Delta[n-1]\hookrightarrow \Delta[n]$ to a triangulation $\mcT'\hookrightarrow \Delta[n-1]$. We can therefore factor the weak equivalence $\mcT\hookrightarrow \Delta[n]$ as
\[
\mcT\hookrightarrow \mcT\cup\Delta[n-1]\hookrightarrow \Delta[n]
\]
where the inclusion $\mcT\hookrightarrow \mcT\cup \Delta[n-1]$ is a pushout of the 2-Segal spine inclusion $\mcT'\hookrightarrow \Delta[n-1]$ and hence is a weak equivalence. By the 2-out-of-3 property, the remaining map is also a weak equivalence.
\end{proof}

\begin{lem}
Given a Cisinski model structure $\mcM$ on $\sSet$, suppose that every 2-Segal spine extension is a weak equivalence. Then, given $n\geq 3$ and $0\leq i<j\leq n$ such that $\{i,j\}\cin \{0,1,\ldots,n\}$ is broken, the generalized 2-Segal horn inclusion $\Lambda^{\{i,j\}}[n]\hookrightarrow \Delta[n]$ is also a weak equivalence.
\end{lem}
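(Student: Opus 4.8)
The plan is to compare the 2-Segal horn to a triangulation sitting inside it, using the hypothesis that 2-Segal spine inclusions are weak equivalences together with the 2-out-of-3 property. First I would dispose of the base case $n=3$: here $\Lambda^{i,j}[3]$ is itself one of the two triangulations of the square (as observed just before the statement), so the inclusion is already a 2-Segal spine inclusion and is a weak equivalence by hypothesis. So from now on assume $n\geq 4$.

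For $n\geq 4$ I would choose the fan triangulation $\mcT$ of the $(n+1)$-gon with cone point the vertex $i$, i.e.\ the triangulation whose triangles are $\{i\}\cup e$ for each boundary edge $e$ of the $(n+1)$-gon not incident to $i$. The first key point is that $\mcT\subseteq\Lambda^{i,j}[n]$: every triangle $\{i,a,b\}$ of $\mcT$ omits all but three vertices, and since $n+1\geq 5$ one can always find an omitted vertex $k\notin\{i,j\}$, so that triangle lies in the present face $d_k\Delta[n]$. (For $n\geq 5$ this holds for any triangulation, while for $n=4$ the unique triangle avoided by $\Lambda^{i,j}[4]$ is $\{0,\dots,n\}\smallsetminus\{i,j\}$, which the fan from $i$ does not contain.) Since $\mcT$ is a triangulation, $\mcT\hookrightarrow\Delta[n]$ is a 2-Segal spine inclusion and hence a weak equivalence. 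Applying 2-out-of-3 to the factorization $\mcT\hookrightarrow\Lambda^{i,j}[n]\hookrightarrow\Delta[n]$, it therefore suffices to prove that $\mcT\hookrightarrow\Lambda^{i,j}[n]$ is a weak equivalence.

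To do this I would filter $\Lambda^{i,j}[n]$, which is exactly the union of the present faces $d_k\Delta[n]$ with $k\neq i,j$, by attaching these faces to $\mcT$ one at a time in a carefully chosen order, arranged so that at each stage the face $d_k\Delta[n]\cong\Delta[n-1]$ meets the union of $\mcT$ with the previously attached faces \emph{precisely} in a triangulation of its own $n$-gon. Each such attachment is then a pushout of a lower-dimensional 2-Segal spine inclusion, hence a weak equivalence, and the composite $\mcT\hookrightarrow\Lambda^{i,j}[n]$ is a weak equivalence. This is the 2-Segal analogue of the inner-horn filtration in \cite[Lem.~3.5]{JT}, and the previous lemma is exactly what lets one recognize a complex of the form ``triangulation $\cup$ one face'' as built from a spine inclusion.

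The main obstacle is the combinatorial bookkeeping in this last step: one must exhibit an ordering of the present faces for which every intersection $d_k\Delta[n]\cap(\text{stage so far})$ is genuinely a 2-Segal spine of $\Delta[n-1]$, and not a triangulation missing a triangle or carrying an extra top-dimensional piece. It is essential that $\mcT$ was chosen inside the horn: if the triangulation contained the bad triangle $\{0,\dots,n\}\smallsetminus\{i,j\}$, then building it into the horn would force a pushout of $\partial\Delta[2]\hookrightarrow\Delta[2]$, which is not a weak equivalence, so no such factorization could succeed. I expect the technical heart of the argument to be the verification that a compatible ordering always exists — plausibly a sweep outward from the cone point $i$, using extremeness (Lemma~\ref{lem:twoextreme}) together with the existence of suitable triangulations (Lemma~\ref{lem:existstri}) to identify each intersection.
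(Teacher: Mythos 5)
There are two problems here, and the first is decisive: you have sketched a proof of the wrong statement. In the paper's notation $\Lambda^{\{i,j\}}[n]$ --- with the braces, i.e.\ the ``additive'' convention spelled out in the remark on generalized-horn notation --- denotes the union of \emph{only} the two faces $d_i\Delta[n]$ and $d_j\Delta[n]$, not the union of all faces except those two. This reading is also forced by the role the lemma plays: it is the base case $\abs{S}=2$ of the induction on $\abs{S}$ in Proposition~\ref{prop:generalizedhornsareweakequivs}, and the statement calls the object a \emph{generalized} 2-Segal horn (a 2-Segal horn proper would require $\abs{S}=n-1$). Your proposal instead treats $\Lambda^{i,j}[n]$ as ``the union of the present faces $d_k\Delta[n]$ with $k\neq i,j$,'' which for $n\geq 4$ is a different and much larger subcomplex. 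Relatedly, your fan triangulation from $i$ cannot be used for the correct statement: every triangle of that fan contains $i$, so none of them lies in $d_i\Delta[n]$, and the fan is not contained in $d_i\Delta[n]\cup d_j\Delta[n]$. The paper instead invokes Lemma~\ref{lem:existstri} to choose a triangulation $\mcT$ in which \emph{both} $i$ and $j$ are extreme; since an extreme vertex lies in exactly one triangle of $\mcT$ and $i,j$ are non-adjacent, every triangle omits $i$ or $j$, so $\mcT\cin \Lambda^{\{i,j\}}[n]$. The actual proof is then very short: the inclusion $\mcT\cup d_i\Delta[n]\hookrightarrow\Delta[n]$ is a weak equivalence by the immediately preceding lemma, it factors through $\Lambda^{\{i,j\}}[n]$, the first factor is a single pushout of a ``triangulation $\cup$ one face'' inclusion into $\Delta[n-1]$ (that lemma again, one dimension down, using that $j$ is extreme), and 2-out-of-3 finishes.

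The second problem is that even for the statement you do address, the filtration step is not mere ``combinatorial bookkeeping'': it fails as you describe it. Once at least one face $d_{k'}\Delta[n]$ has been attached, the next face $d_k\Delta[n]$ meets the union-so-far in a triangulation together with one or more \emph{entire} $(n-2)$-simplices $d_kd_{k'}\Delta[n]$, and for $n\geq 5$ an $(n-2)$-simplex is not a union of triangles, so no ordering of the faces makes every intersection ``genuinely a 2-Segal spine of $\Delta[n-1]$.'' Each attachment is in fact a pushout of an inclusion of the form $(\text{triangulation})\cup(\text{several faces})\hookrightarrow\Delta[n-1]$, and knowing that such inclusions are weak equivalences requires exactly the nested induction (triangulation-plus-one-face, then two faces, then more) that the paper organizes through the preceding lemma and Proposition~\ref{prop:generalizedhornsareweakequivs}. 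Your proposal cites only the one-face lemma, which handles the first attachment but none of the subsequent ones, so the ``technical heart'' you defer is not a verification that an ordering exists --- it is a missing induction on a strictly stronger statement.
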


\begin{proof}
For $n=3$, the 2-Segal horn inclusions are automatically weak equivalences because they are also 2-Segal spine inclusions, so we may assume $n>3$. By Lemma \ref{lem:existstri}, there exists a triangulation $\mcT$ of the $(n+1)$-gon in which the vertices $i$ and $j$ are extreme. The inclusion
\[
\begin{tikzcd}
\mcT\cup\Delta[n-1] \ar[r,hookrightarrow,"\mcT\cup d^i"] & \Delta[n],
\end{tikzcd}
\]
which is a weak equivalence by the previous lemma, can be factored as
\[
\mcT\cup \Delta[n-1]\longhookrightarrow \Lambda^{\{i,j\}}[n]\hookrightarrow \Delta[n],
\]
where the first inclusion is a pushout of the 2-Segal spine inclusion
\[
\begin{tikzcd}
\mcT'\cup\Delta[n-2] \ar[r,hookrightarrow,"\mcT'\cup d^i"] & \Delta[n-1],
\end{tikzcd}
\]
where $\mcT'$ is the restriction of $\mcT$ along $d^j\colon \Delta[n-1]\hookrightarrow \Delta[n]$ (which is again a triangulation because $j$ is extreme). The remaining map $\Lambda^{\{i,j\}}[n]\hookrightarrow \Delta[n]$ is then a weak equivalence by the 2-out-of-3 property.
\end{proof}

Having shown that generalized 2-Segal horn inclusions of the form $\Lambda^{\{i,j\}}[n]\hookrightarrow \Delta[n]$ are necessarily weak equivalences in a Cisinski model structure where the 2-Segal spine extensions are weak equivalences, we may now inductively show that all generalized 2-Segal horn inclusions must be weak equivalences in the following lemma.

\begin{prop}\label{prop:generalizedhornsareweakequivs}
Given a Cisinski model structure $\mcM$ on $\sSet$, suppose that every 2-Segal spine extension is a weak equivalence. Then every generalized 2-Segal horn inclusion $\Lambda^{S}[n]\hookrightarrow \Delta[n]$ is a weak equivalence in $\mcM$.
\end{prop}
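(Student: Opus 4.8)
The plan is to follow the strategy of \cite[Lem.~3.5]{JT}, as indicated, and prove the statement by a nested induction: an outer induction on $n$ and an inner downward induction on $|S|$, i.e.\ an upward induction on the number $n+1-|S|$ of missing faces. For the outer base case $n=3$, the only broken subsets of $\{0,1,2,3\}$ are $\{0,2\}$ and $\{1,3\}$, whose horns are exactly the two 2-Segal spines of $\Delta[3]$ and hence weak equivalences by hypothesis. For $n>3$ I assume that every generalized 2-Segal horn inclusion in dimensions below $n$ is a weak equivalence. The inner base case is then $|S|=n-1$ (exactly two missing faces), which is precisely the 2-Segal horn case settled by the previous lemma.

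For the inner inductive step, suppose $S$ is broken with at least three missing faces. I adjoin a single face $d_{m^*}$ with $m^*\notin S$, chosen so that both $S\cup\{m^*\}$ and the restricted index set below remain broken, and factor $\Lambda^S[n]\hookrightarrow\Delta[n]$ through $\Lambda^{S\cup\{m^*\}}[n]$. Restricting along $d^{m^*}\colon\Delta[n-1]\hookrightarrow\Delta[n]$ identifies $\Lambda^S[n]\cap d_{m^*}\Delta[n]$ with a generalized horn $\Lambda^{S_{m^*}}[n-1]$, where $S_{m^*}$ is the image of $S$ under the order isomorphism $\{0,\dots,n\}\smallsetminus\{m^*\}\cong\{0,\dots,n-1\}$, and it exhibits
\[ \Lambda^S[n]\hookrightarrow\Lambda^{S\cup\{m^*\}}[n] \]
as the pushout of $\Lambda^{S_{m^*}}[n-1]\hookrightarrow\Delta[n-1]$ along $\Lambda^{S_{m^*}}[n-1]\hookrightarrow\Lambda^S[n]$. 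Provided $S_{m^*}$ is broken, the map $\Lambda^{S_{m^*}}[n-1]\hookrightarrow\Delta[n-1]$ is a weak equivalence by the outer induction hypothesis, hence a trivial cofibration, so its pushout is again a trivial cofibration (trivial cofibrations being closed under pushout); and provided $S\cup\{m^*\}$ is broken, the map $\Lambda^{S\cup\{m^*\}}[n]\hookrightarrow\Delta[n]$ is a weak equivalence by the inner induction hypothesis, since it has one fewer missing face. Then $\Lambda^S[n]\hookrightarrow\Delta[n]$ is a weak equivalence by 2-out-of-3.

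The crux, and the step I expect to be the main obstacle, is the combinatorial lemma guaranteeing a choice of $m^*\in\ol S$ for which both $S\cup\{m^*\}$ and $S_{m^*}$ are broken. The clean way to see this is to reformulate the condition: arranging $0,1,\dots,n$ cyclically, with $n$ adjacent to $0$ to reflect the long edge $0\to n$, a proper nonempty subset is broken precisely when it is not a single cyclic arc, i.e.\ when it decomposes into at least two maximal runs. One then checks that adjoining an element $m^*$ of a complementary gap to $S$, or deleting $m^*$ from the cycle (which is exactly the effect of restricting along $d^{m^*}$, since removing an interior or endpoint vertex and closing up the cycle), leaves the number of runs of $S$ unchanged unless $m^*$ is the sole element of its gap, in which case both operations merge two runs into one. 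Hence both $S\cup\{m^*\}$ and $S_{m^*}$ can fail to be broken only when $m^*$ is a singleton gap and $S$ has exactly two runs, which forces exactly two missing faces. Since the inner step assumes at least three missing faces, a valid $m^*$ always exists: take one inside a gap of size at least two if there is one, and otherwise observe that all gaps being singletons forces at least three runs, so any $m^*\in\ol S$ works. This reduces the proposition to the already-established cases and completes the induction.
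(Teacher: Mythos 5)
Your inductive machinery is sound and is essentially the paper's own mechanism run in reverse: you factor through an intermediate generalized horn, identify the attaching map as a lower-dimensional generalized 2-Segal horn inclusion, and close with 2-out-of-3; your cyclic-runs analysis of when adjoining or deleting an index preserves brokenness is also correct (modulo the harmless imprecision that adjoining an element interior to a large gap \emph{increases} rather than preserves the number of runs). The genuine gap is the anchor of your inner induction. Running the induction downward in $\abs{S}$ forces the base case to be $\abs{S}=n-1$, the 2-Segal horns missing exactly two faces, and you assert this is ``settled by the previous lemma.'' It is not: the lemma immediately preceding the proposition establishes that $\Lambda^{\{i,j\}}[n]\hookrightarrow\Delta[n]$ is a weak equivalence, where in the paper's additive notation $\Lambda^{\{i,j\}}[n]$ is the \emph{union of exactly two faces} $d_i\Delta[n]\cup d_j\Delta[n]$ --- the opposite extreme $\abs{S}=2$, which for $n>3$ is a different subcomplex from the 2-Segal horn $\Lambda^{i,j}[n]$. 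The two-missing-faces case is precisely one of the main outputs of this proposition (it is what Proposition~\ref{prop:spinesandhornsweakequivalences} needs), and the paper's induction runs upward on $\abs{S}$ from the base $\abs{S}=2$ exactly so as to reach the 2-Segal horns at the end rather than assume them at the start.

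Nor can your base case be recovered from the outer induction on $n$: adjoining one of the two missing faces to $\Lambda^{i,j}[n]$ produces an ordinary horn $\Lambda^{j}[n]$, and the attaching map is an ordinary horn inclusion $\Lambda^{j'}[n-1]\hookrightarrow\Delta[n-1]$ rather than a generalized 2-Segal horn inclusion; neither of these need be a weak equivalence in a Cisinski model structure where only the 2-Segal spine extensions are assumed to be weak equivalences. So, as written, your argument assumes the hardest instance of the statement. The descent from the 2-Segal horn case to general broken $S$ that you carry out is correct, but to complete the proof you would still need the paper's ascending argument --- building the two-face horns $\Lambda^{\{i,j\}}[n]$ from triangulations of the $(n+1)$-gon via Lemma~\ref{lem:existstri}, and then inducting upward on $\abs{S}$ --- to establish your base case in the first place.
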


\begin{proof}
Let $N=\abs{S}$. We proceed by induction on $N$, with the base case $N=2$ being precisely the scenario of the previous lemma. Therefore, given $N\geq 2$, suppose that every generalized 2-Segal horn inclusion $\Lambda^{S}[n]\hookrightarrow \Delta[n]$ where $\abs{S}=N$ and $n\geq 3$ is arbitrary is a weak equivalence. Given a generalized 2-Segal horn inclusion  $\Lambda^{S}[n]\hookrightarrow \Delta[n]$ for some $n\geq 4$ with $\abs{S}=N+1$, we would like to show that it is also a weak equivalence. Because $\abs{S}>2$, there exists a subset $T\cin S$ with $\abs{T}=N$ where $T$ is broken in $\{0,1,\ldots,n\}$. Let $i$ be the element of $S$ such that $T\cup \{i\}=S$, and denote by $T'$ the preimage $(d^i)^{-1}(T)\cin \{0,1,\ldots,n-1\}$ which is also broken because $T$ is. We can therefore factor the inclusion
\[
\Lambda^{T}[n]\hookrightarrow \Delta[n],
\]
which is a weak equivalence by the inductive hypothesis, as
\[
\Lambda^{T}[n]\hookrightarrow \Lambda^{S}[n]\hookrightarrow \Delta[n],
\]
where the first inclusion is a pushout of the inclusion
\[
\Lambda^{T'}[n-1]\hookrightarrow \Delta[n-1]
\]
which is also a weak equivalence by the inductive hypothesis. By the 2-out-of-3 property the remaining inclusion $\Lambda^{S}[n]\hookrightarrow \Delta[n]$ must be a weak equivalence as well.
\end{proof}

\subsection{2-Segal horns as generators}
Now that we see that (generalized) 2-Segal horn inclusions are necessarily weak equivalences in a Cisinski model structure where the 2-Segal spine inclusions are weak equivalences, our remaining goal is to prove the converse by showing that the 2-Segal horn inclusions are sufficient to build all generalized 2-Segal horn inclusions and 2-Segal spine extensions. We begin with the generalized 2-Segal horn inclusions.

\begin{prop}\label{prop:generalizedhornsarepushouts}
Every generalized 2-Segal horn inclusion $\Lambda^S[n]\hookrightarrow \Delta[n]$ is a composite of pushouts of 2-Segal horn inclusions.
\end{prop}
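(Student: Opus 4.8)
The plan is to build $\Delta[n]$ from $\Lambda^S[n]$ by attaching the missing faces one at a time, showing that each attachment is a pushout of a lower-dimensional generalized $2$-Segal horn inclusion that, by induction, is itself a composite of pushouts of $2$-Segal horns. Write $M=\{0,\dots,n\}\smallsetminus S$ for the set of missing faces, so that $\Lambda^S[n]$ is $2$-Segal exactly when $|M|=2$. The argument will be a lexicographic induction on the pair $(n,|M|)$, with base cases the $2$-Segal horns themselves ($|M|=2$), each of which is a composite of pushouts of $2$-Segal horns trivially.

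For the inductive step, assume $|M|\geq 3$ (which forces $n\geq 4$, so that $n-1\geq 3$ below). I would pick a single $k\in M$ and factor
\[
\Lambda^S[n]\hookrightarrow \Lambda^{S\cup\{k\}}[n]\hookrightarrow \Delta[n].
\]
The second map is a generalized $2$-Segal horn inclusion with one fewer missing face, hence a composite of pushouts of $2$-Segal horns by the induction on $|M|$, provided $S\cup\{k\}$ is still broken. The first map attaches the single face $d_k\Delta[n]$, and the key computation is that the intersection $d_k\Delta[n]\cap\Lambda^S[n]$ consists exactly of the faces of $d_k\Delta[n]$ obtained by deleting a vertex indexed by an element of $S$; under the order isomorphism $d_k\Delta[n]\cong\Delta[n-1]$ this intersection is the generalized horn $\Lambda^{S'}[n-1]$, where $S'$ is the image of $S$ after deleting $k$. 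Thus the first map is a pushout of $\Lambda^{S'}[n-1]\hookrightarrow\Delta[n-1]$, which is a composite of pushouts of $2$-Segal horns by the induction on $n$, provided $S'$ is broken; and taking a pushout of such a composite yields again a composite of pushouts of $2$-Segal horns.

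Everything therefore reduces to a combinatorial claim: for broken $S$ with $|M|\geq 3$, there is a choice of $k\in M$ such that both $S\cup\{k\}$ and $S'$ are broken. I would reformulate brokenness via the two-colouring of $\{0,\dots,n\}$ by membership in $S$, noting that a subset is broken precisely when this colouring has at least four maximal monochromatic runs when read from $0$ to $n$. Deleting the (white) element $k$ and recolouring it black each change the number of runs in a controlled way, and the task is to keep both counts at least four. When some run of missing faces has length at least two, any $k$ inside it leaves the run count unchanged or larger under both operations; the delicate case is when every missing face is isolated, where I would choose a missing face at the boundary of $\{0,\dots,n\}$ if one exists, and otherwise exploit the larger run count forced by having no boundary missing faces.

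The main obstacle I anticipate is precisely this combinatorial claim: producing a single $k$ that simultaneously keeps both $S\cup\{k\}$ and $S'$ broken, since recolouring tends to merge runs while a careless deletion can likewise destroy them, and the two constraints pull in opposite directions. The run-counting reformulation is what makes the bookkeeping tractable, and separating off the all-isolated-missing-faces case (where the run count is automatically large) is what closes it. The remaining ingredients — identifying the attaching map as a relabelled generalized horn, and the fact that a pushout of a composite of pushouts is again a composite of pushouts — are routine.
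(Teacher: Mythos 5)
Your proposal is correct and follows essentially the same route as the paper: the same factorization $\Lambda^S[n]\hookrightarrow \Lambda^{S\cup\{k\}}[n]\hookrightarrow \Delta[n]$, the same identification of the first map as a pushout of $\Lambda^{S'}[n-1]\hookrightarrow\Delta[n-1]$ with $S'=(d^k)^{-1}(S)$, and an induction on the number of missing faces. The only difference is that you make explicit (and correctly verify, via the run-counting reformulation of brokenness) the combinatorial selection of a $k$ keeping both $S\cup\{k\}$ and $S'$ broken, a point the paper asserts with only a brief justification.
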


\begin{proof}
We proceed by induction on $N=n+1-\abs{S}$, the number of missing faces of $\Lambda^S[n]$. The base case is $N=2$, which is when $\Lambda^S[n]\hookrightarrow \Delta[n]$ is already a 2-Segal horn inclusion. So, assuming the inductive hypothesis for $N\geq 2$, we would like to show that a given generalized 2-Segal horn $\Lambda^S[n]\hookrightarrow \Delta[n]$ for which $n+1-\abs{S}=N+1$ (i.e., the number of missing faces is $N+1$) is a composite of pushouts of 2-Segal horn inclusions. We do so by showing it is a composite of pushouts of generalized 2-Segal horn inclusions where the number of missing faces is equal to $N$.

Since $n+1-\abs{S}>2$, there exists $0\leq i\leq n$ not in $S$ such that $S\cup\{i\}$ is broken in $\{0,1,\ldots,n\}$. Let $T=S\cup \{i\}$, and denote by $S'$ the preimage $(d^i)^{-1}(S)\cin \{0,1,\ldots,n-1\}$ which is also broken because $S$ is. We can therefore factor the inclusion
\[
\Lambda^{S}[n]\hookrightarrow \Delta[n]
\]
as
\[
\Lambda^{S}[n]\hookrightarrow \Lambda^{T}[n]\hookrightarrow \Delta[n],
\]
where the first inclusion is a pushout of the inclusion
\[
\Lambda^{S'}[n-1]\hookrightarrow \Delta[n-1].
\]
The number of missing faces is $N$ for both $\Lambda^{S'}[n-1]$ and $\Lambda^{T}[n]$, as desired.
\end{proof}

We now check that the 2-Segal spine inclusions can be built out of 2-Segal horn inclusions.

\begin{lem}
Given $n\geq 3$ and a triangulation $\mcT$ of the $(n+1)$-gon with extreme vertex $i$, both the 2-Segal spine inclusion $\mcT\hookrightarrow \Delta[n]$ and the inclusion $\mcT\cup d^i\colon \mcT\cup \Delta[n-1]\hookrightarrow \Delta[n]$ are composites of pushouts of 2-Segal horn inclusions.
\end{lem}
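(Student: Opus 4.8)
The plan is to prove both inclusions simultaneously by induction on $n$, using throughout that the class of maps expressible as a composite of pushouts of 2-Segal horn inclusions is closed under composition and under cobase change; the latter holds because, by the pushout-pasting lemma, a pushout of a composite of pushouts of 2-Segal horns is again such a composite. For the base case $n=3$, the only triangulations of the square are $\Lambda^{0,2}[3]$ and $\Lambda^{1,3}[3]$, which are themselves 2-Segal horns, and the face $d_i\Delta[3]$ opposite an extreme vertex $i$ already lies in $\mcT$; hence both $\mcT\hookrightarrow\Delta[3]$ and $\mcT\cup\Delta[2]\hookrightarrow\Delta[3]$ equal a single 2-Segal horn inclusion.

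The crux is the inclusion $\mcT\cup\Delta[n-1]\hookrightarrow\Delta[n]$, where $\Delta[n-1]=d_i\Delta[n]$. Since $i$ is extreme, $\mcT$ restricts along $d^i$ to a triangulation $\mcT'=\mcT\cap d_i\Delta[n]$ of the $n$-gon, and the only triangle of $\mcT$ outside $\mcT'$ is its \emph{ear} $\tau_i$ (the triangle $(i-1)\to i\to(i+1)$, read cyclically as $(0,1,n)$ if $i=0$ and $(0,n-1,n)$ if $i=n$). Consequently
\[
\mcT\cup\Delta[n-1]=d_i\Delta[n]\cup\tau_i
\]
depends only on the ear. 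For the inductive step I take $n\geq 4$ and choose a vertex $k$ with $\{i,k\}$ broken; such a $k$ exists since deleting $i$ and its two cyclic neighbors leaves at least two vertices, and brokenness of $\{i,k\}$ is exactly the condition $k\notin\{i-1,i,i+1\}$, which also guarantees $\tau_i\cin d_k\Delta[n]$. I then factor the inclusion as
\[
d_i\Delta[n]\cup\tau_i\;\hookrightarrow\; d_i\Delta[n]\cup d_k\Delta[n]\;\hookrightarrow\;\Delta[n].
\]
The second map is the generalized 2-Segal horn inclusion $\Lambda^{\{i,k\}}[n]\hookrightarrow\Delta[n]$, a composite of pushouts of 2-Segal horns by Proposition~\ref{prop:generalizedhornsarepushouts}. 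The first map attaches $d_k\Delta[n]$ along the intersection $(d_i\Delta[n]\cup\tau_i)\cap d_k\Delta[n]=(d_i\Delta[n]\cap d_k\Delta[n])\cup\tau_i$, so it is a pushout of the inclusion $(d_i\Delta[n]\cap d_k\Delta[n])\cup\tau_i\hookrightarrow d_k\Delta[n]\cong\Delta[n-1]$. Inside $d_k\Delta[n]$ this last map is precisely ``the face opposite $i$ together with the ear at $i$,'' i.e.\ an instance of the second inclusion of the lemma in dimension $n-1$ (take any triangulation of $d_k\Delta[n]$ having $\tau_i$ as its ear at $i$), so by the inductive hypothesis it is a composite of pushouts of 2-Segal horns, and hence so is its pushout.

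For the spine inclusion $\mcT\hookrightarrow\Delta[n]$ I factor it as $\mcT\hookrightarrow\mcT\cup\Delta[n-1]\hookrightarrow\Delta[n]$, again with $\Delta[n-1]=d_i\Delta[n]$ for a chosen extreme vertex $i$ (which exists by Lemma~\ref{lem:twoextreme}). Since $\mcT\cap d_i\Delta[n]=\mcT'$, the square on $\mcT'$, $\mcT$, $d_i\Delta[n]$, $\mcT\cup\Delta[n-1]$ is a pushout, so the first inclusion is a pushout of the spine inclusion $\mcT'\hookrightarrow\Delta[n-1]$; by the inductive hypothesis (for the first statement in dimension $n-1$) this is a composite of pushouts of 2-Segal horns, as then is its pushout. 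The second inclusion is the case just treated, which completes the induction.

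The main obstacle is the inclusion $\mcT\cup\Delta[n-1]\hookrightarrow\Delta[n]$. In the weak-equivalence analogues proved earlier one simply invokes that the spine inclusion is a weak equivalence and concludes by 2-out-of-3, but here the spine inclusion is not yet known to be built from 2-Segal horns, so that argument would be circular. What breaks the circularity is the observation that $\mcT\cup\Delta[n-1]$ depends only on the ear $\tau_i$, combined with the companion-face factorization through the generalized 2-Segal horn $\Lambda^{\{i,k\}}[n]$; I would also check that the boundary cases $i=0$ and $i=n$ go through verbatim once ``ear'' and ``broken'' are interpreted cyclically.
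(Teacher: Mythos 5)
Your proof is correct and follows essentially the same strategy as the paper's: induct on $n$, factor the inclusion $\mcT\cup d_i\Delta[n]\hookrightarrow\Delta[n]$ through a generalized 2-Segal horn $\Lambda^{\{i,k\}}[n]$ on two non-adjacent faces (handled by Proposition~\ref{prop:generalizedhornsarepushouts}), recognize the remaining piece as a pushout of a lower-dimensional instance of the second inclusion, and treat the spine inclusion via a pushout of the lower-dimensional spine inclusion. The only minor difference is that the paper takes $k$ to be a second \emph{extreme} vertex of $\mcT$ itself (supplied by Lemma~\ref{lem:twoextreme}) and restricts $\mcT$ along that face, whereas you exploit the observation that $\mcT\cup d_i\Delta[n]$ depends only on the ear at $i$, which lets you choose any non-adjacent $k$ together with an auxiliary triangulation.
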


\begin{proof}
We induct on $n$, with the base case $n=3$ following from the observation that the 2-Segal spine inclusions $\mcT\hookrightarrow \Delta[3]$ are already 2-Segal horn inclusions, and that if $i$ is an extreme vertex of $\mcT$ in this case then $d_i\Delta[n]$ is contained in $\mcT$, so that the inclusion $\mcT\cup d^i$ is the same as $\mcT\hookrightarrow \Delta[n]$.

Now assume that the hypothesis holds for some $n\geq 3$. Given a triangulation $\mcT$ of the $(n+2)$-gon with extreme vertex $i$, there must also be another extreme vertex $j$ by Lemma \ref{lem:twoextreme}. We factor $\mcT\hookrightarrow \Delta[n+1]$ as
\[
\mcT\longhookrightarrow \mcT \cup \Delta[n]\longhookrightarrow \Lambda^{\{i,j\}}[n+1]\longhookrightarrow \Delta[n+1],
\]
where the inclusion $\mcT\cup \Delta[n]\hookrightarrow \Delta[n+1]$ is $\mcT\cup d^i$. The last inclusion is a generalized 2-Segal horn inclusion (because $i$ and $j$ being extreme implies that they cannot be adjacent on the $(n+2)$-gon), and so is a composite of pushouts of 2-Segal horn inclusions by the previous lemma.

The inclusion $\mcT\cup \Delta[n]\hookrightarrow \Lambda^{\{i,j\}}[n+1]$ is a pushout of the inclusion
\[
\begin{tikzcd}
{\mcT'\cup \Delta[n-1]}\ar[r,"\mcT\cup d^{i'}"] & {\Delta[n],}
\end{tikzcd}
\]
where $\mcT'$ is the restriction of $\mcT$ to the $d_j$ face of $\Delta[n+1]$, and where $i'$ is the extreme vertex of $\mcT'$ corresponding to the vertex $i$ of $\mcT$. By the inductive hypothesis, this inclusion is a composite of pushouts of 2-Segal horn inclusions, and hence so is the composite $\mcT\cup \Delta[n]\hookrightarrow \Delta[n+1]$.

The inclusion $\mcT\hookrightarrow \mcT\cup \Delta[n]$ is a pushout of the inclusion $\mcT'\hookrightarrow \Delta[n]$ and so is also a composite of pushouts of 2-Segal horn inclusions by the inductive hypothesis. Therefore, the entire composite inclusion $\mcT\hookrightarrow \Delta[n+1]$ is such a composite of pushouts as well.
\end{proof}

For clarity, let us restate the main takeaway of the above lemma in the following corollary.

\begin{cor}\label{cor:spines}
Every 2-Segal spine inclusion is a composite of pushouts of 2-Segal horn inclusions.
\end{cor}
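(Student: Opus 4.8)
The plan is to obtain this immediately from the preceding lemma, whose only extra hypothesis over the statement of the corollary is the choice of an extreme vertex. A 2-Segal spine inclusion is by definition an inclusion $\mcT \hookrightarrow \Delta[n]$ for some $n \geq 3$ with $\mcT$ a triangulation of the $(n+1)$-gon, so to drop that hypothesis I only need to verify that such a triangulation always possesses an extreme vertex to feed into the lemma.

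First I would invoke Lemma \ref{lem:twoextreme}: for every $n \geq 2$, every triangulation of the $(n+1)$-gon has at least two extreme vertices (indeed at least one distinct from $0$ and $n$). In particular, for the range $n \geq 3$ relevant to 2-Segal spines, I may fix any extreme vertex $i$ of $\mcT$. With such an $i$ in hand, the preceding lemma applies verbatim and asserts that $\mcT \hookrightarrow \Delta[n]$ is a composite of pushouts of 2-Segal horn inclusions, which is exactly the claim.

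I do not expect any genuine obstacle here. The entire substance---the inductive decomposition of $\mcT \hookrightarrow \Delta[n]$ through the intermediate stages $\mcT \cup \Delta[n-1]$ and $\Lambda^{\{i,j\}}[n]$, together with the reduction of those pieces (via Proposition \ref{prop:generalizedhornsarepushouts}) to 2-Segal horn inclusions---was already carried out in the preceding lemma. The corollary merely records that one can always produce the extreme vertex that the lemma requires, so the only point to confirm is that the hypothesis $n \geq 2$ of Lemma \ref{lem:twoextreme} comfortably covers the range $n \geq 3$ on which 2-Segal spines are defined, which it does.
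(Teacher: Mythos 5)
Your proposal is correct and matches the paper, which gives no separate proof and simply records the corollary as "the main takeaway of the above lemma." Your one addition---explicitly citing Lemma~\ref{lem:twoextreme} to supply the extreme vertex that the lemma's hypothesis requires---is exactly the right (and only) point needing verification, and it holds since $n\geq 3$ certainly satisfies $n\geq 2$.
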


Because quasi-2-Segal sets are defined as simplicial sets with 2-Segal horn extensions, Proposition \ref{prop:generalizedhornsarepushouts} and Corollary \ref{cor:spines} have the following corollary.

\begin{cor}
Every quasi-2-Segal set has fillers of all 2-Segal spine inclusions and generalized 2-Segal horn inclusions.
\end{cor}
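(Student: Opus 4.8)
The plan is to derive this statement purely formally from Proposition~\ref{prop:generalizedhornsarepushouts} and Corollary~\ref{cor:spines}, using the standard interplay between lifting properties and saturated classes recalled in Subsection~\ref{sub:lifts}. Write $\mathcal{H}$ for the set of 2-Segal horn inclusions. By Definition~\ref{def:q2seg}, a simplicial set $X$ is a quasi-2-Segal set precisely when the unique map $X\to \ast$ has the right lifting property with respect to every map in $\mathcal{H}$. So the goal is to upgrade this lifting property from $\mathcal{H}$ to the larger collection of 2-Segal spine inclusions and generalized 2-Segal horn inclusions.

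First I would invoke the fact that a map has the right lifting property with respect to a set $\mathcal{H}$ if and only if it has the right lifting property with respect to its saturated closure $\ol{\mathcal{H}}$. Consequently, it suffices to show that every 2-Segal spine inclusion and every generalized 2-Segal horn inclusion belongs to $\ol{\mathcal{H}}$, and then the right lifting property of $X\to\ast$ against these maps is automatic.

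This containment is exactly what the two preceding results provide. Proposition~\ref{prop:generalizedhornsarepushouts} shows that every generalized 2-Segal horn inclusion is a composite of pushouts of maps in $\mathcal{H}$, and Corollary~\ref{cor:spines} shows the same for every 2-Segal spine inclusion. Since $\ol{\mathcal{H}}$ is saturated, it is closed under pushouts and transfinite (in particular finite) composition, so any composite of pushouts of members of $\mathcal{H}$ again lies in $\ol{\mathcal{H}}$. Hence both families of inclusions are contained in $\ol{\mathcal{H}}$, and therefore $X\to\ast$ has the right lifting property against them, i.e.\ $X$ has the asserted fillers.

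There is no substantial obstacle here: the genuine combinatorial work was carried out in Proposition~\ref{prop:generalizedhornsarepushouts} and Corollary~\ref{cor:spines}, and this statement merely repackages those decompositions through the lifting-property formalism. The only point deserving a moment's care is the verification that ``composite of pushouts of 2-Segal horn inclusions'' indeed places a map in $\ol{\mathcal{H}}$, which follows immediately from the definition of a saturated class as being closed under pushouts and transfinite composition.
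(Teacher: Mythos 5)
Your argument is correct and is exactly the reasoning the paper intends: the corollary is stated as an immediate consequence of Proposition~\ref{prop:generalizedhornsarepushouts} and Corollary~\ref{cor:spines}, using that the right lifting property against a set passes to its saturated closure. Your write-up just makes that one-line deduction explicit.
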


The above results are 2-Segal analogues of those about ordinary spines and (generalized) inner horn inclusions first proved by Joyal \cite[52.6]{Joyal:notes} \cite[Prop.~2.12 \& 2.13]{Joyal:theory}. Putting them together, we get the following proposition.

\begin{prop}\label{prop:spinesandhornsweakequivalences}
Given a Cisinski model structure $\mcM$ on $\sSet$, the following are equivalent:
\begin{enumerate}
    \item Every 2-Segal spine inclusion is a weak equivalence in $\mcM$.
    \item Every 2-Segal horn inclusion is a weak equivalence in $\mcM$.
    \item Every generalized 2-Segal horn inclusion is a weak equivalence in $\mcM$.
\end{enumerate}
\end{prop}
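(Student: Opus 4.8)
The plan is to prove the cycle of implications $(1)\Rightarrow(3)\Rightarrow(2)\Rightarrow(1)$, from which the three-way equivalence follows. Two of the genuine ingredients—the ``spines generate horns'' direction and the ``spines force horns to be weak equivalences'' direction—have already been established as separate results in this section, so the work here is mainly to assemble them, together with the observation that in a Cisinski model structure every inclusion in sight is a monomorphism and hence a cofibration.

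For $(1)\Rightarrow(3)$ I would simply invoke Proposition \ref{prop:generalizedhornsareweakequivs}, which asserts exactly that if every 2-Segal spine extension is a weak equivalence, then so is every generalized 2-Segal horn inclusion. The implication $(3)\Rightarrow(2)$ is then immediate from the definitions: a 2-Segal horn inclusion is by definition the special case of a generalized 2-Segal horn inclusion with $\abs{S}=n-1$, so if all generalized 2-Segal horn inclusions are weak equivalences, the 2-Segal horn inclusions are in particular.

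The remaining implication $(2)\Rightarrow(1)$ is where the only real conceptual point lies. Here I would use Corollary \ref{cor:spines}, which expresses every 2-Segal spine inclusion as a composite of pushouts of 2-Segal horn inclusions. The crux is to upgrade ``composite of pushouts'' to ``weak equivalence,'' and this is precisely where being in a Cisinski model structure is used: since the cofibrations are exactly the monomorphisms, each 2-Segal horn inclusion is a cofibration, and assuming $(2)$ it is also a weak equivalence, hence a trivial cofibration. As the class of trivial cofibrations is saturated (fact (2) of Subsection \ref{sub:modelstructures}), it is closed under pushouts and transfinite composition, so any composite of pushouts of trivial cofibrations is again a trivial cofibration and in particular a weak equivalence. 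Applying this to the decomposition from Corollary \ref{cor:spines} shows every 2-Segal spine inclusion is a weak equivalence, closing the cycle. I expect no serious obstacle, since all the combinatorial labor was discharged in the preceding lemmas; the subtlety to flag is merely that the saturation argument goes through the class of trivial \emph{cofibrations} rather than the class of weak equivalences directly, which requires the Cisinski hypothesis that cofibrations coincide with monomorphisms.
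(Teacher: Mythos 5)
Your proposal is correct and follows essentially the same route as the paper: $(1)\Rightarrow(3)$ via Proposition~\ref{prop:generalizedhornsareweakequivs}, $(3)\Rightarrow(2)$ as an immediate specialization, and $(2)\Rightarrow(1)$ via Corollary~\ref{cor:spines} together with the fact that the monomorphisms in question are cofibrations and the class of trivial cofibrations is saturated. No discrepancies to flag.
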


\begin{proof}
That (3) follows from (1) is the content of Proposition \ref{prop:generalizedhornsareweakequivs}. It is immediate that (2) follows from (3) because every 2-Segal horn is also a generalized 2-Segal horn. That (1) follows from (2) uses Corollary \ref{cor:spines} plus the observation that all of the maps in question are cofibrations (being monomorphisms) and hence are trivial cofibrations if they are weak equivalences. Any composite of pushouts of trivial cofibrations is again a trivial cofibration, hence a weak equivalence.
\end{proof}

\begin{rmk}\label{rmk:2segisquasi2seg}
One can show using arguments similar to this section's that a simplicial set $X$ has unique 2-Segal spine extensions if and only if it has unique (generalized) 2-Segal horn inclusions, and hence that every 2-Segal set is a quasi-2-Segal set.
\end{rmk}

\section{Path space criterion}\label{sec:pathspacecriterion}

One of the fundamental results of 2-Segal theory is that a simplicial space is 2-Segal if and only if its path spaces (also known as its upper and lower d\'ecalage) are Segal spaces, as first proved independently in \cite[Thm.~6.3.2]{DK} and \cite[Thm.~4.10]{GKT:partI}. The goal of this section is to show that the analogous fact is true of quasi-2-Segal sets: a simplicial set is quasi-2-Segal if and only if its path spaces are quasi-categories. We then see that this criterion gives us access to a wide class of examples of quasi-2-Segal sets.

\subsection{Path space criterion}
We begin by recalling the definition of the path spaces of a simplicial set.

\begin{defn}
Given a simplicial set $X$, the \emph{left path space} $P^{\triangleleft}(X)$ is the simplicial set whose $n$-simplices are maps $\Delta[0]\star\Delta[n]\to X$, and the \emph{right path space} $P^{\triangleright}(X)$ has $n$-simplices given by maps $\Delta[n]\star\Delta[0]\to X$.
\end{defn}

\begin{rmk}
In both cases, the $n$-simplices of the path space are $(n+1)$-simplices of $X$ because the join of $\Delta[0]$ and $\Delta[n]$ is $\Delta[n+1]$. However, the face and degeneracy maps are inherited from $X$ differently depending on which side we take the join.
\end{rmk}

Let us unpack what it means for the left path space $P^{\triangleleft}(X)$ to be a quasi-category. Given $n\geq 2$, an $n$-simplex in the path space $\Delta[n]\to P^{\triangleleft}(X)$ is an $(n+1)$-simplex $\Delta[n+1]\to X$, and the $j$th face of the $n$-simplex in the path space corresponds to the $(j+1)$ face of the $(n+1)$-simplex in $X$. Therefore, an inner horn in the path space $\Lambda^i[n]\to P^{\triangleleft}(X)$ corresponds to the 2-Segal horn $\Lambda^{0,i+1}[n+1]\to X$. Notice that $0<i<n$ is precisely what we needed for ${0,i+1}$ to be broken in $\{0,1,\ldots,n+1\}$. We record this observation and the corresponding one for right path spaces in the following lemma.

\begin{lem}
Given a simplicial set $X$, the left path space $P^{\triangleleft}(X)$ is a quasi-category if and only if $X$ has fillers of 2-Segal horns of the form $\Lambda^{0,j}[n]$, and the right path space $P^{\triangleright}(X)$ is a quasi-category if and only if $X$ has fillers of 2-Segal horns of the form $\Lambda^{i,n}[n]$.
\end{lem}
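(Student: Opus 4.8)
The plan is to exploit the bijection between $n$-simplices of a path space and $(n+1)$-simplices of $X$, keeping careful track of how the face maps are reindexed so that inner horns of the path space become 2-Segal horns of $X$. For the left path space the discussion immediately preceding the lemma already records the essential point: an $n$-simplex of $P^{\triangleleft}(X)$ is an $(n+1)$-simplex of $X$, and the $j$th face of the former is the $(j+1)$st face of the latter. First I would upgrade this to a bijection between horns $\Lambda^i[n]\to P^{\triangleleft}(X)$ and horns $\Lambda^{0,i+1}[n+1]\to X$: the path-space horn specifies all faces $d_j$ with $j\ne i$, and under the reindexing these are exactly the faces $d_{j+1}$ of $X$ for $j\ne i$, i.e. all faces of the $(n+1)$-simplex except $d_0$ (the cone point is never part of the path-space data) and $d_{i+1}$. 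Since a filler $\Delta[n]\to P^{\triangleleft}(X)$ is by definition an $(n+1)$-simplex of $X$ restricting correctly, filling the path-space horn is the same as filling the corresponding horn $\Lambda^{0,i+1}[n+1]$ of $X$.

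Next I would check that this assignment matches up the two relevant classes of horns bijectively. An inner horn requires $0<i<n$ (with $n\ge 2$), whereas $\Lambda^{0,i+1}[n+1]$ is a 2-Segal horn precisely when $\{0,i+1\}$ is broken in $\{0,1,\ldots,n+1\}$, i.e. when $0$ and $i+1$ are cyclically non-adjacent, i.e. when $i+1\ne 1$ and $i+1\ne n+1$. Both conditions unwind to exactly $0<i<n$, so the substitution $i\mapsto i+1$, $n\mapsto n+1$ (with inverse $j\mapsto j-1$, $m\mapsto m-1$) is a bijection between inner horns $\Lambda^i[n]$ and 2-Segal horns $\Lambda^{0,j}[m]$. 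Hence $P^{\triangleleft}(X)$ has fillers of all inner horns if and only if $X$ has fillers of all 2-Segal horns of the form $\Lambda^{0,j}[n]$, which is the first claim.

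For the right path space I would run the dual argument. Here $\Delta[n]\star\Delta[0]\cong\Delta[n+1]$ with the cone point as the final vertex, so again an $n$-simplex of $P^{\triangleright}(X)$ is an $(n+1)$-simplex of $X$, but now the $j$th face of the path-space simplex is the $j$th face of $X$ for $j\le n$, while $d_{n+1}$ of $X$ deletes the cone point and carries no path-space data. Thus an inner horn $\Lambda^i[n]\to P^{\triangleright}(X)$ corresponds to the horn of $X$ missing $d_i$ and $d_{n+1}$, namely $\Lambda^{i,n+1}[n+1]$, which is a 2-Segal horn exactly when $\{i,n+1\}$ is broken, i.e. when $i\ne n$ and $i\ne 0$, i.e. again when $0<i<n$. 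After renaming the top dimension these are precisely the 2-Segal horns $\Lambda^{i,n}[n]$, giving the second claim.

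The underlying computations are routine; the only real care is the index bookkeeping, and specifically the observation that the brokenness condition on $\{0,i+1\}$ (respectively $\{i,n+1\}$) collapses to exactly the inner-horn range $0<i<n$. This is what makes each correspondence a genuine bijection rather than a one-sided implication, and it is the step I would verify most carefully.
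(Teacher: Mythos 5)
Your argument is correct and is essentially identical to the paper's: the paper likewise establishes the lemma by the reindexing $d_j \mapsto d_{j+1}$ (resp.\ $d_j\mapsto d_j$) coming from $\Delta[0]\star\Delta[n]\cong\Delta[n+1]$, identifying $\Lambda^i[n]\to P^{\triangleleft}(X)$ with $\Lambda^{0,i+1}[n+1]\to X$ (resp.\ $\Lambda^{i,n+1}[n+1]\to X$) and observing that the inner condition $0<i<n$ translates exactly to brokenness of $\{0,i+1\}$ (resp.\ $\{i,n+1\}$). The extra care you take in checking that the correspondence of horns is a genuine bijection of the two families is exactly the point the paper flags with ``Notice that $0<i<n$ is precisely what we needed.''
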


Another way to phrase this observation is that the set of 2-Segal horn inclusions missing the $d_0$ face
\[
\{\Lambda^{0,j}[n]\hookrightarrow \Delta[n]\}
\]
is isomorphic to $\Delta[0]\star \InnHorn$. Similarly, the set of generalized 2-Segal horn inclusions missing the $d_0$ face is isomorphic to $\Delta[0]\star \GenInnHorn$. Because the generalized inner horn inclusions are in the saturated class generated by the inner horn inclusions, we also have
\[
\Delta[0]\star \GenInnHorn \cin \ol{\Delta[0]\star \InnHorn},
\]
as well as the dual statement taking the join on the other side, so we can strengthen this statement as in the following corollary.

\begin{cor}
Given a simplicial set $X$, the left path space $P^{\triangleleft}(X)$ is a quasi-category if and only if $X$ has fillers of generalized 2-Segal horns of the form $\Lambda^{0,j_1,j_2,\ldots,j_r}[n]$, and the right path space $P^{\triangleright}(X)$ is a quasi-category if and only if $X$ has fillers of 2-Segal horns of the form $\Lambda^{i_1,i_2,\ldots,i_r,n}[n]$.
\end{cor}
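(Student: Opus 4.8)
The plan is to deduce this corollary from the preceding lemma purely formally, by rephrasing both filling conditions as right lifting properties of the map $X\to\ast$ and then applying the saturated-closure facts recalled just before the statement.

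First I would translate both sides of the claimed equivalence into lifting properties. By the previous lemma, $P^{\triangleleft}(X)$ is a quasi-category if and only if $X\to\ast$ has the right lifting property against the set of $2$-Segal horn inclusions missing $d_0$, which was identified with $\Delta[0]\star\InnHorn$. On the other hand, ``$X$ has fillers of all generalized $2$-Segal horns of the form $\Lambda^{0,j_1,\ldots,j_r}[n]$'' says exactly that $X\to\ast$ has the right lifting property against the set of generalized $2$-Segal horn inclusions missing $d_0$, which was identified with $\Delta[0]\star\GenInnHorn$. So the content reduces to showing that a map has the right lifting property against $\Delta[0]\star\InnHorn$ if and only if it has the right lifting property against $\Delta[0]\star\GenInnHorn$.

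Next I would settle this equivalence by a saturation argument. One direction is immediate: since $\InnHorn\cin\GenInnHorn$ we have $\Delta[0]\star\InnHorn\cin\Delta[0]\star\GenInnHorn$, so lifting against the larger set implies lifting against the smaller one, giving that $P^{\triangleleft}(X)$ is a quasi-category as soon as $X$ has all the generalized fillers. For the converse I would invoke the containment $\Delta[0]\star\GenInnHorn\cin\ol{\Delta[0]\star\InnHorn}$ noted in the lead-up to the statement, which rests on $\GenInnHorn\cin\ol{\InnHorn}$ together with the fact that $\Delta[0]\star(-)$ carries the saturation-defining operations (pushouts, transfinite composites, retracts) into the saturated closure. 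Since a map has the right lifting property against a set precisely when it has the right lifting property against that set's saturated closure, a map lifting against $\Delta[0]\star\InnHorn$ automatically lifts against $\ol{\Delta[0]\star\InnHorn}$ and hence against $\Delta[0]\star\GenInnHorn$. This establishes the left path space statement, and the right path space statement follows by the same argument after replacing $\Delta[0]\star(-)$ by $(-)\star\Delta[0]$ and using the dual identification of the generalized $2$-Segal horns missing $d_n$ with $\GenInnHorn\star\Delta[0]$.

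Since the two nontrivial ingredients---the identifications of horn sets with joins and the closure containment---are already recorded just before the corollary, I expect essentially no obstacle; the argument is a direct application of the lifting-property/saturated-closure correspondence. The only point I would take care to spell out is the index bookkeeping underlying the identification $\{\Lambda^{0,j_1,\ldots,j_r}[n]\}\cong\Delta[0]\star\GenInnHorn$: joining a generalized inner horn $\Lambda^{i_1,\ldots,i_r}[n-1]$ on the left with $\Delta[0]$ shifts each missing face $d_{i}$ to $d_{i+1}$ and introduces the new missing face $d_0$, and one checks that prepending this fixed missing face to a missing set satisfying the inner straddle condition yields a missing set that is broken in the sense of Definition \ref{def:broken}, so that the result is exactly a generalized $2$-Segal horn missing $d_0$.
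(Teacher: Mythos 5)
Your argument is correct and is essentially the paper's own: the paper proves this corollary in the paragraph immediately preceding it, via exactly the identifications $\{\Lambda^{0,j_1,\ldots,j_r}[n]\}\cong \Delta[0]\star \GenInnHorn$ and the containment $\Delta[0]\star\GenInnHorn\cin\ol{\Delta[0]\star\InnHorn}$ combined with the lifting-property/saturated-closure correspondence. Your version merely spells out the easy direction and the index bookkeeping more explicitly, which is fine.
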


The path spaces of a simplicial set being quasi-categories is therefore an ostensibly weaker condition than our definition of quasi-2-Segal set, because it does not directly say that there are fillers of 2-Segal horns such as $\Lambda^{1,3}[4]$. However, we see in the following proposition that the remaining 2-Segal horns can be built from generalized 2-Segal horns of the form $\Lambda^{0,j_1,j_2,\ldots,j_r}[n]$ and $\Lambda^{i_1,i_2,\ldots,i_r,n}[n]$, so that the desired path space criterion does in fact hold.

\begin{prop}
Given $0<i<j-1<n-1$, the 2-Segal horn inclusion $\Lambda^{i,j}[n]\hookrightarrow \Delta[n]$ is a retract of a composite of pushouts of generalized 2-Segal horn inclusions which are each missing an outer face.
\end{prop}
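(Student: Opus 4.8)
The plan is to obtain the retract by passing to a higher-dimensional simplex via décalage, exploiting the fact established just above this proposition that the generalized $2$-Segal horns missing an outer face are exactly the décalages of generalized inner horns: those missing $d_0$ form $\Delta[0]\star\GenInnHorn$, and those missing the top face form $\GenInnHorn\star\Delta[0]$. Since $\Lambda^{i,j}[n]$ is missing two \emph{inner} faces, a single décalage cannot suffice, so I would cone on both sides and work inside $\Delta[n+2]=\Delta[0]\star\Delta[n]\star\Delta[0]$, whose two new extreme vertices supply the outer faces $d_0$ and $d_{n+2}$ that the building blocks are allowed to omit. Realize the base $\Delta[n]$ as the full subcomplex on the middle vertices $\{1,\dots,n+1\}$, let $\phi\colon\Delta[n]\hookrightarrow\Delta[n+2]$ be this inclusion, and let $\psi\colon\Delta[n+2]\to\Delta[n]$ be induced by the surjection collapsing each apex onto its neighbouring base vertex, so that $\psi\phi=\id$. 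Under this picture the double cone of $\Lambda^{i,j}[n]$ is the single outer-missing generalized $2$-Segal horn $\Lambda^{0,i+1,j+1,n+2}[n+2]$, whose set of omitted faces is broken precisely because the hypothesis $i<j-1<n-1$ makes all the gaps among $0,i+1,j+1,n+2$ at least two.

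The retraction $\psi$ does not on its own carry this double cone into $\Lambda^{i,j}[n]$, because an apex can be used to ``cover'' a collapsed base vertex, sending some horn simplices onto the two forbidden faces $d_i,d_j$ or the top cell of $\Delta[n]$. I would therefore take the witness map to be $C\hookrightarrow\Delta[n+2]$, where $C$ is the full subcomplex on the simplices $\tau$ with $\psi(\tau)\in\Lambda^{i,j}[n]$; equivalently, $C$ omits exactly those $\tau$ that contain every interior base vertex other than $i+1,j+1$ and meet both apex-neighbourhoods $\{0,1\}$ and $\{n+1,n+2\}$. A direct check then gives $\phi(\Lambda^{i,j}[n])\subseteq C$ and $\psi(C)\subseteq\Lambda^{i,j}[n]$, so that $\phi,\psi$ exhibit $\Lambda^{i,j}[n]\hookrightarrow\Delta[n]$ as a retract of $C\hookrightarrow\Delta[n+2]$ in the arrow category. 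It remains to show that $C\hookrightarrow\Delta[n+2]$ is a composite of pushouts of generalized $2$-Segal horn inclusions each missing an outer face.

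This last step is the main obstacle. The complement $\Delta[n+2]\smallsetminus C$ is an explicit finite collection of cofaces of $\{1,\dots,n+1\}\smallsetminus\{i+1,j+1\}$, and one must attach them by a filtration in which every attaching map is a broken horn omitting $d_0$ or $d_{n+2}$. The difficulty is genuine: if one cones on only one side (inside $\Delta[n+1]$), the analogous complement contains cells whose entire boundary is already present, which would force non-$2$-Segal single-face (i.e.\ inner) fillers — a phenomenon one can already exhibit for $\Lambda^{1,3}[4]$. Raising the dimension with a second apex removes this obstruction by boosting the dimension of every such cell, leaving room to fill along horns of the forms $\Lambda^{0,\dots}$ and $\Lambda^{\dots,n+2}$; for $\Lambda^{1,3}[4]$, for instance, the top cell of $\Delta[6]$ can be attached along the broken, doubly-outer horn $\Lambda^{0,2,4,6}[6]$. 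Carrying out this filtration in general — fixing the order of attachment, bundling lower cells as omitted faces of higher-dimensional horns, and verifying at each stage that the omitted-face set is broken (which is exactly where $0<i<j-1<n-1$ is used) and contains an outer face — is the technical heart of the argument. Once this is done, the retract above yields the statement, which together with the earlier identifications $\{\Lambda^{0,\dots}\}\cong\Delta[0]\star\GenInnHorn$ and its dual completes the reduction underlying the path space criterion.
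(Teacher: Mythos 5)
Your proposal does not constitute a proof: the step you yourself identify as ``the technical heart of the argument''---exhibiting $C\hookrightarrow\Delta[n+2]$ as a composite of pushouts of outer-missing generalized 2-Segal horn inclusions---is never carried out. You describe the shape of the needed filtration, give a single low-dimensional data point ($\Lambda^{1,3}[4]$ and the horn $\Lambda^{0,2,4,6}[6]$), and assert that adding a second apex ``leaves room'' to fill everything along broken horns containing an outer face, but there is no general construction, no specified order of attachment, and no verification that each attaching map is of the required form. Since the entire content of the proposition is precisely this decomposition, the argument as written is incomplete in an essential way. A secondary issue: the retract itself is only sketched (``a direct check then gives\ldots''), and one should confirm that $\psi(C)\subseteq\Lambda^{i,j}[n]$ holds for the full subcomplex $C$ you define, not just for the double cone.

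Your structural claim that a one-sided cone cannot work is also not correct as stated. The paper's proof uses exactly one extra vertex, retracting through $\Delta[n+1]$ via $d^{n+1}$ and $s^{n}$. The phenomenon you observe---that the \emph{full preimage} $(s^n)^{-1}(\Lambda^{i,j}[n])$ has complementary cells (e.g.\ the simplex on $\{0,\ldots,n\}\smallsetminus\{i,j\}$) whose entire boundary is already present---is real, but the fix is not to raise the dimension again; it is to choose a \emph{smaller} middle subcomplex. The paper takes $\Lambda^{0,i,j,n,n+1}[n+1]\cup(d_0d_{n+1}\Delta[n+1])\cup(d_nd_{n+1}\Delta[n+1])$, for which the inclusion into $\Delta[n+1]$ factors as just three pushouts of outer-missing generalized 2-Segal horns: $\Lambda^{0,i,j}[n]\hookrightarrow\Delta[n]$ (glued along $d^n$), $\Lambda^{i-1,j-1,n}[n]\hookrightarrow\Delta[n]$ (glued along $d^0$), and finally $\Lambda^{i,j,n+1}[n+1]\hookrightarrow\Delta[n+1]$ itself. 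The problematic cell above is then absorbed as one of the missing faces of that last horn rather than needing its own attachment. If you want to salvage your two-apex approach, you would need to either produce the filtration of $\Delta[n+2]\smallsetminus C$ explicitly or, better, shrink $C$ to a subcomplex admitting a short, explicit factorization as in the paper.
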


\begin{proof}
We form the following retract diagram




\[
\begin{tikzcd}[column sep=small]
	{\Lambda^{i,j}[n]} && {\Lambda^{0,i,j,n,n+1}[n+1]\cup (d_0d_{n+1}\Delta[n+1]) \cup (d_nd_{n+1}\Delta[n+1])} && {\Lambda^{i,j}[n]} \\
	{\Delta[n]} && {\Delta[n+1]} && {\Delta[n]\nospace{.}}
	\arrow["{d^{n+1}}"', from=2-1, to=2-3]
	\arrow["{s^{n}}"', from=2-3, to=2-5]
	\arrow[hook, from=1-5, to=2-5]
	\arrow[hook, from=1-3, to=2-3]
	\arrow[hook, from=1-1, to=2-1]
	\arrow[from=1-1, to=1-3]
	\arrow[from=1-3, to=1-5]
\end{tikzcd}
\]
To see that the left-hand square is valid, observe that the $d_0$ and $d_n$ faces of $\Delta[n]$ correspond respectively to the $d_0 d_{n+1}$ face and $d_n d_{n+1}$ face of $\Delta[n+1]$, and for $0<k<n$ with $k\neq i,j$, the $d_k$ face of $\Delta[n]$ is sent to the $d_{n}$ face of the $d_k$ face of $\Delta[n+1]$, and so is in $d_k\Delta[n+1]\cin\Lambda^{0,i,j,n,n+1}[n+1]$.

For the right-hand square, the $d_0 d_{n+1}$ face and $d_n d_{n+1}$ face are respectively mapped back to the $d_0$ and $d_n$ faces of $\Delta[n]$, and for $0<k<n$ with $k\neq i,j$, the $d_k$ face of $\Delta[n+1]$ is collapsed onto the $d_k$ face of $\Delta[n]$, and so is in $\Lambda^{i,j}[n]$.

This middle map can be built out of generalized 2-Segal horns which are each missing an outer face, as shown in the following diagram:




\[
\adjustbox{scale=0.75}{
\begin{tikzcd}[column sep=small]
	{\Lambda^{0,i,j}[n]} && {\Delta[n]} \\
	{\Lambda^{0,i,j,n,n+1}[n+1]\cup (d_0d_{n+1}) \cup (d_nd_{n+1})} && {\Lambda^{0,i,j,n+1}[n+1]\cup(d_0d_{n+1})} && {\Lambda^{i,j,n+1}[n+1]} && {\Delta[n+1]} \\
	&& {\Lambda^{i-1,j-1,n}[n]} && {\Delta[n]\nospace{.}}
	\arrow[hook, from=2-5, to=2-7]
	\arrow[hook, from=2-3, to=2-5]
	\arrow[hook, from=2-1, to=2-3]
	\arrow[from=1-1, to=2-1]
	\arrow[hook, from=1-1, to=1-3]
	\arrow["{d^n}", from=1-3, to=2-3]
	\arrow["{d^0}"', from=3-5, to=2-5]
	\arrow[from=3-3, to=2-3]
	\arrow[from=3-3, to=3-5]
\end{tikzcd}
}
\]

\end{proof}

In other words, we have shown that the sets of inclusions $\Delta[0]\star \InnHorn$ and $\InnHorn\star \Delta[0]$ generate the class of 2-Segal anodyne maps, yielding the following theorem.

\begin{thm}[Path Space Criterion]\label{thm:pathspacecriterion}
A simplicial set $X$ is a quasi-2-Segal set if and only if its path spaces are quasi-categories.
\end{thm}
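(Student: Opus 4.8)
The plan is to deduce Theorem \ref{thm:pathspacecriterion} directly from the results already assembled in this section, so that almost all the work has been front-loaded into the lemmas. First I would unwind the definitions: by Definition \ref{def:q2seg}, $X$ is quasi-2-Segal precisely when $X\to\ast$ lifts against the set of all 2-Segal horn inclusions, equivalently against the 2-Segal anodyne class $\ol{\{\Lambda^{i,j}[n]\hookrightarrow\Delta[n]\}}$. On the other side, the two preceding lemmas and corollary identify ``$P^{\triangleleft}(X)$ and $P^{\triangleright}(X)$ are quasi-categories'' with $X$ having the right lifting property against exactly the 2-Segal horns missing an outer face, i.e.\ those of the form $\Lambda^{0,j}[n]$ together with those of the form $\Lambda^{i,n}[n]$, which are the images of $\InnHorn$ under the joins $\Delta[0]\star(-)$ and $(-)\star\Delta[0]$.

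The substance of the argument is then purely a comparison of saturated classes. The forward direction is immediate: since every outer-face 2-Segal horn is in particular a 2-Segal horn, a quasi-2-Segal set automatically has the required fillers, so its path spaces are quasi-categories. The reverse direction is where I would invoke the just-proved Proposition. Let $\mcS$ denote the set $(\Delta[0]\star\InnHorn)\cup(\InnHorn\star\Delta[0])$, whose saturated closure, by the corollary, is exactly the class against which $X$ lifts when its path spaces are quasi-categories. I would argue that every 2-Segal horn inclusion $\Lambda^{i,j}[n]\hookrightarrow\Delta[n]$ lies in $\ol{\mcS}$. For horns already missing an outer face ($i=0$ or $j=n$) this is automatic. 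For a genuinely inner 2-Segal horn, with $0<i<j-1<n-1$, the Proposition exhibits $\Lambda^{i,j}[n]\hookrightarrow\Delta[n]$ as a retract of a composite of pushouts of generalized 2-Segal horn inclusions each missing an outer face; each such generator lies in $\Delta[0]\star\GenInnHorn$ or $\GenInnHorn\star\Delta[0]$, which sit inside $\ol{\mcS}$ by the containment $\Delta[0]\star\GenInnHorn\subseteq\ol{\Delta[0]\star\InnHorn}$ noted above. Since $\ol{\mcS}$ is closed under pushouts, composites, and retracts, it follows that $\Lambda^{i,j}[n]\hookrightarrow\Delta[n]\in\ol{\mcS}$.

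Combining the two containments, the set of 2-Segal horn inclusions and the set $\mcS$ generate the same saturated class, so having the right lifting property against one is equivalent to having it against the other. Therefore $X$ lifts against all 2-Segal horns if and only if it lifts against $\mcS$, which by the corollary says precisely that both path spaces are quasi-categories. This completes the equivalence.

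I expect the only real obstacle to be bookkeeping rather than conceptual: one must be careful that the generalized 2-Segal horns appearing as the generators in the Proposition are genuinely of outer-face type (so that they are captured by $\Delta[0]\star\GenInnHorn$ or $\GenInnHorn\star\Delta[0]$ and not merely by some larger class), and that the retract and pushout-closure hypotheses of a saturated class are all legitimately available. Since saturation is closed under exactly the operations used (retracts, pushouts, transfinite composites) and the Proposition's construction uses only these, no new technical input is needed—the theorem is essentially a formal consequence of the lemmas, and the proof can be stated in a few lines.
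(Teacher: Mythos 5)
Your proposal is correct and follows essentially the same route as the paper: the forward direction is immediate, and the reverse direction reduces, via the retract/pushout proposition for $\Lambda^{i,j}[n]$ with $0<i<j-1<n-1$ and the containment $\Delta[0]\star\GenInnHorn\subseteq\ol{\Delta[0]\star\InnHorn}$, to the observation that $(\Delta[0]\star\InnHorn)\cup(\InnHorn\star\Delta[0])$ generates the 2-Segal anodyne class. This is exactly how the paper assembles the theorem from the preceding lemmas.
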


\subsection{Examples}

As a corollary of Theorem \ref{thm:pathspacecriterion}, we get a wide class of examples of quasi-2-Segal sets coming from 2-Segal spaces. Recall that we view a simplicial space $W$ as a grid whose $n$th column is the simplicial set $W_n$, which is a Kan complex if $W$ is vertical Reedy fibrant.

\begin{cor}\label{cor:rowsof2segalspaces}
Given a vertical Reedy fibrant 2-Segal space $W$, each row of $W$ is a quasi-2-Segal set.
\end{cor}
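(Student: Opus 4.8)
The plan is to apply the Path Space Criterion (Theorem~\ref{thm:pathspacecriterion}), after reducing the statement about $W$ to the corresponding statement for Segal spaces. Fix $m\geq 0$ and write $R$ for the $m$th row of $W$, i.e.\ the simplicial set with $R_n=(W_n)_m$ and structure maps induced from those of $W$. The technical backbone is the natural isomorphism
\[
\Hom_{\sSet}(K,R)\cong (\Map(p_1^{\ast}K,W))_m,
\]
valid for every simplicial set $K$ (and with $W$ replaced by any simplicial space), which one proves by writing $K$ as a colimit of its simplices: both sides then become the limit of $(W_a)_m$ over the simplices $\Delta[a]\to K$, since $p_1^{\ast}$ preserves colimits and $\Map(-,W)$ turns them into limits. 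With $K=\Delta[n]$ this recovers $R_n=(W_n)_m$, and in general it translates lifting problems against inclusions of subcomplexes of $\Delta[n]$ into $R$ into surjectivity statements for the induced maps on $m$-simplices of mapping spaces.

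By Theorem~\ref{thm:pathspacecriterion} it suffices to show that $P^{\triangleleft}(R)$ and $P^{\triangleright}(R)$ are quasi-categories. First I would observe that taking path spaces commutes with taking rows: since $(P^{\triangleleft}(R))_n=R_{n+1}=(W_{n+1})_m$ with faces and degeneracies shifted exactly as in forming $P^{\triangleleft}(W)$, there is a natural isomorphism identifying $P^{\triangleleft}(R)$ with the $m$th row of the simplicial space $P^{\triangleleft}(W)$, and dually for $P^{\triangleright}$. Now the (already established) path space criterion for $2$-Segal spaces says that, because $W$ is $2$-Segal, both $P^{\triangleleft}(W)$ and $P^{\triangleright}(W)$ are Segal spaces, in particular vertical Reedy fibrant. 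The corollary therefore follows from the sub-claim that every row of a vertical Reedy fibrant Segal space is a quasi-category.

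To prove the sub-claim for a Segal space $V$, I would run the same mapping-space argument one dimension lower. For an inner horn inclusion $\Lambda^i[n]\hookrightarrow \Delta[n]$, vertical Reedy fibrancy of $V$ makes $\Map(p_1^{\ast}\Delta[n],V)\to \Map(p_1^{\ast}\Lambda^i[n],V)$ a Kan fibration (the class of monomorphisms for which this holds is saturated and contains the boundary inclusions defining Reedy fibrancy). The Segal condition makes the analogous map for the spine inclusion $Sp[n]\hookrightarrow \Delta[n]$ a trivial fibration. Passing from spines to inner horns is the one genuinely nontrivial step: it is the $1$-Segal analogue of Proposition~\ref{prop:spinesandhornsweakequivalences}, due to Joyal, applied to the class $\mcW$ of maps $f\colon A\to B$ for which $\Map(p_1^{\ast}B,V)\to \Map(p_1^{\ast}A,V)$ is a weak equivalence. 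Because $\Map(p_1^{\ast}(-),V)$ is a functor into $\sSet$ and $V$ is Reedy fibrant, $\mcW$ satisfies $2$-out-of-$3$ and is closed under pushouts of monomorphisms (such a pushout becomes a pullback of a trivial fibration); since $\mcW$ contains the spine inclusions, Joyal's argument shows it contains every inner horn inclusion. Hence $\Map(p_1^{\ast}\Delta[n],V)\to \Map(p_1^{\ast}\Lambda^i[n],V)$ is both a Kan fibration and a weak equivalence, so it is surjective on $m$-simplices; by the identification of the first paragraph (now for $V$), every inner horn in the $m$th row of $V$ has a filler, so that row is a quasi-category. Applying this to $V=P^{\triangleleft}(W)$ and $V=P^{\triangleright}(W)$ completes the proof.

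I expect the main obstacle to be exactly this spine-to-inner-horn passage in the sub-claim, i.e.\ verifying that $\mcW$ has enough closure to let Joyal's argument run; everything else (the colimit identification, the commutation of path spaces with rows, and the invocation of the two path space criteria) is formal. One could alternatively bypass the reduction to Segal spaces and argue directly, showing that $\Map(p_1^{\ast}\Delta[n],W)\to\Map(p_1^{\ast}\Lambda^{i,j}[n],W)$ is a trivial fibration for every $2$-Segal horn by applying the very results of this section to the class $\mcW$ for $W$; but the route through Theorem~\ref{thm:pathspacecriterion} and the $2$-Segal space path space criterion is the more economical one.
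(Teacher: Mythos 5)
Your proof is correct and follows essentially the same route as the paper: identify the path spaces of a row with the rows of the path spaces, invoke the path space criterion for 2-Segal spaces to see those are Segal spaces, and conclude via Theorem~\ref{thm:pathspacecriterion}. The only difference is that your sub-claim (each row of a vertical Reedy fibrant Segal space is a quasi-category) is exactly \cite[Cor.~3.6]{JT}, which the paper cites directly rather than reproving; your mapping-space argument for it is the standard one and is sound.
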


\begin{proof}
The left (resp.~right) path space of a row of $W$ is precisely the corresponding row of the left (resp.~right) path space of $W$, which is a Segal space by the path space criterion for 2-Segal spaces \cite[Thm.~6.3.2]{DK}. By \cite[Cor.~3.6]{JT}, each row of a Segal space is a quasi-category. Since each of the path spaces of a given row of $W$ are quasi-categories, each row of $W$ is a quasi-2-Segal set by Theorem \ref{thm:pathspacecriterion}.
\end{proof}

This corollary implies that for many constructions which output 2-Segal spaces, the corresponding construction where one simply takes a set of $n$-simplices instead of a Kan complex outputs quasi-2-Segal sets. In particular, our motivating example, the discrete $S_{\bullet}$ construction, indeed outputs quasi-2-Segal sets.

\begin{ex}(Discrete $S_{\bullet}$)
Recall that every quasi-category $X$ contains a maximal Kan complex $\core X\cin X$. Dyckerhoff-Kapranov \cite[Def.~7.3.1]{DK} define a simplicial space $S_{\bullet}(\mcC)$ for $\mcC$ an exact quasi-category by taking a full subcomplex
\[
S_{n}(\mcC) \cin \core (\Map(N(T_n),\mcC))
\]
for each $n\geq 0$, where $T$ is a certain cosimplicial object in $\Cat$ they define in \cite[\S 2.4]{DK}. This full subcomplex is taken to span the set of diagrams $N(T_n)\to \mcC$ satisfying axioms they call (WS1), (WS2), and (WS3). To get a discrete version of this construction, we can define
\[
S^{\operatorname{set}}_{n}(\mcC)\cin \Hom(N(T_n),\mcC)
\]
as the subset of diagrams satisfying those three axioms. This discrete construction $S^{\operatorname{set}}_{\bullet}(\mcC)$ is then the $0$th row of the original construction $S_{\bullet}(\mcC)$, and so is a quasi-2-Segal set by Corollary \ref{cor:rowsof2segalspaces} because $S_{\bullet}(\mcC)$ is a 2-Segal space \cite[Thm.~7.3.3]{DK}.
\end{ex}

\section{Pushout-products and pushout-joins of 2-Segal anodyne maps}\label{sec:pushoutprod}

A compelling feature of quasi-categories is their characterization in terms of ``having a contractible space of composites for every composable pair of morphisms.'' More accurately, a simplicial set $X$ is a quasi-category if and only if the map $\Map(\Delta[2],X)\to \Map(\Lambda^1[2],X)$ is a trivial fibration. This fact follows from the fact that the set $(\Lambda^1[2]\hookrightarrow \Delta[2])\boxprod \Bdry$ generates the class of inner anodyne morphisms together with the adjoint correspondence




\[
\begin{tikzcd}
	{(A\times \Delta[n])\cup (B\times \partial\Delta[n])} & X &&& {\partial\Delta[n]} & {\Map(B,X)} \\
	{B\times \Delta[n]} & \ast & {} & {} & {\Delta[n]} & {\Map(A,X)\nospace{,}}
	\arrow[from=2-1, to=2-2]
	\arrow[from=1-2, to=2-2]
	\arrow[from=1-1, to=1-2]
	\arrow[from=1-1, to=2-1]
	\arrow[dotted, from=2-1, to=1-2]
	\arrow[shift left=5, squiggly, tail reversed, from=2-3, to=2-4]
	\arrow[from=1-6, to=2-6]
	\arrow[from=1-5, to=1-6]
	\arrow[from=2-5, to=2-6]
	\arrow[from=1-5, to=2-5]
	\arrow[dotted, from=2-5, to=1-6]
\end{tikzcd}
\]
in the case $A\hookrightarrow B$ is the inner horn inclusion $\Lambda^1[2]\hookrightarrow \Delta[2]$. The goal of this section is to show that the sets $(\Lambda^{0,2}[3]\hookrightarrow \Delta[3])\boxprod \Bdry$ and $(\Lambda^{1,3}[3]\hookrightarrow \Delta[3])\boxprod \Bdry$ together generate the same saturated class as the 2-Segal horn inclusions. With that fact, we can apply the correspondence above to conclude that a simplicial set $X$ is quasi-2-Segal if and only if the maps $\Map(\Delta[3],X)\to \Map(\Lambda^{0,2}[3],X)$ and $\Map(\Delta[3],X)\to \Map(\Lambda^{1,3}[3],X)$ are trivial fibrations. 

To show that $(\Lambda^{0,2}[3]\hookrightarrow \Delta[3])\boxprod \Bdry\cup(\Lambda^{1,3}[3]\hookrightarrow \Delta[3])\boxprod \Bdry$ generates the class of 2-Segal anodyne maps, we use the following diagram




\[\begin{tikzcd}[column sep=small]
	\TwoSegHorn && {\left((\Lambda^{0,2}[3]\hookrightarrow \Delta[3])\boxprod \Bdry\right)\cup \left((\Lambda^{1,3}[3]\hookrightarrow \Delta[3])\boxprod \Bdry\right)} \\
	{\Delta[0]\star\InnHorn\cup\InnHorn\star\Delta[0]} && {\left((\Lambda^{0,2}[3]\hookrightarrow \Delta[3])\boxprod \Mono\right)\cup \left((\Lambda^{1,3}[3]\hookrightarrow \Delta[3])\boxprod \Mono\right)}
	\arrow["{(a)}"', from=1-1, to=2-1]
	\arrow["{(b)}"', dotted, from=2-1, to=2-3]
	\arrow["{(c)}"', from=2-3, to=1-3]
	\arrow["{(d)}"', dotted, from=1-3, to=1-1]
\end{tikzcd}\]
where an arrow $S\to T$ indicates that $S\cin \ol{T}$. We may conclude that all four sets generate the same saturated class once we prove the existence of each of these arrows. We note that (a) is the observation from Section \ref{sec:pathspacecriterion} that the set $\Delta[0]\star \InnHorn$ is the set of 2-Segal horn inclusions of the form $\Lambda^{0,j}[n]\hookrightarrow \Delta[n]$ and the corresponding statement for $\InnHorn\star\Delta[0]$, while (c) follows from the fact that $\ol{S}\boxprod \ol{T}\cin \ol{S\boxprod T}$ for any sets of maps $S$ and $T$ together with the fact that $\ol{\Bdry}=\Mono$. Our task for the remainder of this section is therefore to prove (b) and (d). We begin by proving (b).

\begin{prop}
Every 2-Segal horn inclusion of the form $\Lambda^{0,j}[n]\hookrightarrow \Delta[n]$ is a retract of the inclusion $(\Lambda^{0,2}[3]\hookrightarrow \Delta[3])\boxprod (\Lambda^{0,j}[n]\hookrightarrow \Delta[n])$, and every 2-Segal horn inclusion of the form $\Lambda^{i,n}[n]\hookrightarrow \Delta[n]$ is a retract of the inclusion $(\Lambda^{1,3}[3]\hookrightarrow \Delta[3])\boxprod (\Lambda^{i,n}[n]\hookrightarrow \Delta[n])$.
\end{prop}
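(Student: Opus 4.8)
The plan is to realize each horn inclusion as an explicit retract of the named pushout-product, via a retract diagram of inclusions
\[
\begin{tikzcd}[column sep=small]
\Lambda^{0,j}[n] \ar[r,"u"] \ar[d] & (\Delta[n]\times\Lambda^{0,2}[3])\cup(\Lambda^{0,j}[n]\times\Delta[3]) \ar[r,"u'"] \ar[d] & \Lambda^{0,j}[n] \ar[d] \\
\Delta[n] \ar[r,"s"'] & \Delta[n]\times\Delta[3] \ar[r,"r"'] & \Delta[n]
\end{tikzcd}
\]
in which both horizontal composites are identities; the bottom row is a section $s$ and a retraction $r$, and the top maps $u,u'$ are their restrictions. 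The two asserted statements are interchanged by the order-reversing automorphism $k\mapsto n-k$ of $[n]$, which carries $\Lambda^{0,j}[n]$ to $\Lambda^{n-j,n}[n]$ and $\Lambda^{0,2}[3]$ to $\Lambda^{1,3}[3]$; applying it to both factors transports the retract, so it suffices to treat the first case. Recall $2\le j\le n-1$, since $\{0,j\}$ must be broken. This is the 2-Segal analogue of the classical fact that $(\Lambda^1[2]\hookrightarrow\Delta[2])\boxprod\Bdry$ generates the inner anodyne maps.

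\textbf{The section.} For $s$ I would take the graph $s=(\id_{\Delta[n]},\beta)$ of the monotone map $\beta\colon[n]\to[3]$ sending $0\mapsto 0$, sending $1,\dots,j-1\mapsto 1$, sending $j\mapsto 2$, and sending $j+1,\dots,n\mapsto 3$ (aligned with the decomposition $\Lambda^{0,j}[n]=\Delta[0]\star\Lambda^{j-1}[n-1]$). Because the $\Delta[n]$-coordinate of $s$ is the identity, every simplex of $\Lambda^{0,j}[n]$ has $\Delta[n]$-projection in $\Lambda^{0,j}[n]$ and hence lands in $\Lambda^{0,j}[n]\times\Delta[3]$; so $s$ restricts to $u$ and the left square commutes for free. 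The bounds $2\le j\le n-1$ ensure $\beta$ hits both $1$ and $3$, so the top simplex of the graph projects to a simplex of $\Delta[3]$ containing the missing edge $\{1,3\}$. Thus $s(\Delta[n])$ must \emph{escape} $\Delta[n]\times\Lambda^{0,2}[3]$ — which is necessary, since every vertex of $\Delta[3]$ already lies in $\Lambda^{0,2}[3]$ and so a constant section $\id\times\{v\}$ would be trapped inside the source and force $\Delta[n]\subseteq\Lambda^{0,j}[n]$.

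\textbf{The retraction.} The map $r$ is the nerve of a monotone map $[n]\times[3]\to[n]$ collapsing the $\Delta[3]$-direction, subject to $r(x,\beta(x))=x$ (so $r\circ s=\id$) and to $r(\text{source})\subseteq\Lambda^{0,j}[n]$ (which yields $u'$ and the right square). The containment is the entire content: a face of $\Delta[n]$ lies in $\Lambda^{0,j}[n]$ exactly when its vertex set omits some $\ell\notin\{0,j\}$, equivalently when it is not one of the four faces containing the hub $[n]\setminus\{0,j\}$. So $r$ must be arranged so that the preimages of the two forbidden faces $d_0$ and $d_j$ are separated inside the source: no chain lying in $\Delta[n]\times\Lambda^{0,2}[3]$ or in $\Lambda^{0,j}[n]\times\Delta[3]$ may hit the entire hub together with $0$ or with $j$. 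In the base case $n=3$, $j=2$ one can take $\beta=\id$ and read off $r$ from a $4\times 4$ table, checking that a value-set containing both $1$ and $3$ only occurs for a chain through the two corners $(1,1),(3,3)$, which is not in the source; the general case adapts this to the block structure of $\beta$.

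\textbf{Main obstacle.} The crux is exactly the construction and verification of $r$: producing a single monotone collapse $[n]\times[3]\to[n]$ whose image of every source simplex avoids $d_0$ and $d_j$. This is a finite combinatorial check, but delicate precisely because the section is forced out of the slab $\Delta[n]\times\Lambda^{0,2}[3]$, so the separation of the two forbidden regions inside the source cannot be read off a projection and must be engineered by hand. Once $r$ is fixed, the commutativities and the identities $r\circ s=\id$, $u'\circ u=\id$ are immediate, and the statement for $\Lambda^{i,n}[n]$ follows by the order-reversal symmetry.
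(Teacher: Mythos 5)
Your setup is the same as the paper's: realize $\Lambda^{0,j}[n]\hookrightarrow\Delta[n]$ as a retract of the pushout-product via a section $\Delta[n]\to\Delta[3]\times\Delta[n]$ given by the graph of the ``block'' map $\beta$ (your $\beta$ is exactly the paper's $f$, up to the order of the factors), followed by a monotone collapse back to $\Delta[n]$; the symmetry reduction to the $\Lambda^{0,j}[n]$ case is also how the paper proceeds. However, there is a genuine gap: you never construct the retraction. You list the constraints it must satisfy ($r\circ s=\id$ and $r$ sending the source of the pushout-product into $\Lambda^{0,j}[n]$), verify a $4\times 4$ table in the single case $n=3$, $j=2$, and assert that ``the general case adapts this to the block structure of $\beta$.'' But the existence of such an $r$ for all $n$ and $j$ is the entire content of the proposition --- you correctly identify it as ``the crux'' and ``the main obstacle'' and then do not resolve it. The paper writes the map down explicitly: $p(a,b)=0$ if $a=0$; $p(a,b)=b$ if $a=1$ and $b\leq j$; $p(a,b)=j$ if $a=1$ and $b>j$, or $a=2$, or $a=3$ and $b\leq j$; and $p(a,b)=b$ if $a=3$ and $b>j$. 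The substantive verification --- that $p(\Lambda^{0,2}[3]\times\Delta[n])\cin\Lambda^{0,j}[n]$ and $p(\Delta[3]\times\Lambda^{0,j}[n])\cin\Lambda^{0,j}[n]$, using $j<n$ for the $d_3$ part, $j>1$ for the $d_1$ part, and the observation that the unique preimage of each $k\neq 0,j$ lies in the relevant missing face --- is precisely what is absent from your argument, and it is not a routine adaptation of the $n=3$ table since the check depends on $j$ in two different ways.

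A secondary imprecision: you state that the forbidden event is a source simplex whose image ``hit[s] the entire hub together with $0$ or with $j$.'' The correct criterion is that the image's vertex set contains $[n]\smallsetminus\{0,j\}$ at all --- the face spanned by the hub alone is already \emph{not} in $\Lambda^{0,j}[n]$ (it is contained only in $d_0\Delta[n]$ and $d_j\Delta[n]$, the two missing faces). You do state the criterion correctly one sentence earlier (``omits some $\ell\notin\{0,j\}$''), so this is a slip rather than a conceptual error, but any verification of $r$ built on the weaker condition would be checking the wrong thing.
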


\begin{proof}
By symmetry, it suffices to prove the first statement. Our retract diagram is




\[\begin{tikzcd}
	{\Lambda^{0,j}[n]} && {(\Lambda^{0,2}[3]\times\Delta[n])\cup(\Delta[3]\times\Lambda^{0,j}[n])} && {\Lambda^{0,j}[n]} \\
	{\Delta[n]} && {\Delta[3]\times\Delta[n]} && {\Delta[n]}
	\arrow[hook, from=1-1, to=2-1]
	\arrow[hook, from=1-3, to=2-3]
	\arrow[hook, from=1-5, to=2-5]
	\arrow[hook, from=1-1, to=1-3]
	\arrow["f"', hook, from=2-1, to=2-3]
	\arrow["p"', from=2-3, to=2-5]
	\arrow[from=1-3, to=1-5]
\end{tikzcd}\]
where we define $f$ and $p$ as the following poset maps:
\begin{equation*}
f(\ell)=\begin{cases}
          (0,\ell) \quad &\text{if } \, \ell=0\\
          (1,\ell) &\text{if } \, 0<\ell<j\\
          (2,\ell) \quad &\text{if } \, \ell=j\\
          (3,\ell) \quad &\text{if } \, \ell>j
     \end{cases}
\end{equation*}
and
\begin{equation*}
p(a,b)=\begin{cases}
          0 \quad &\text{if } \, a=0\\
          b \quad &\text{if } \, a=1 \text{ and }0\leq b\leq j\\
          j \quad &\text{if } \, a=1 \text{ and }b> j\\
          j \quad &\text{if } \, a=2\\
          j \quad &\text{if } \, a=3 \text{ and }b\leq j\\
          b \quad &\text{if } \, a=3 \text{ and }b> j\nospace{.}
     \end{cases}
\end{equation*}
A case-by-case check shows that $pf=\id_{\Delta[n]}$. We also have $f(\Lambda^{0,j}[n])\cin \Delta[3]\times\Lambda^{0,j}[n]$ since $f$ is the identity on the second component, justifying the lefthand square. It now remains to verify the righthand square by checking that the restriction of $p$ to $\Lambda^{0,2}[3]\times \Delta[n]$ and to $\Delta[3]\times\Lambda^{0,j}[n]$ lands in $\Lambda^{0,j}[n]\cin \Delta[n]$.

To see that $\Lambda^{0,2}[3]\times\Delta[n]$ lands in $\Lambda^{0,j}[n]$, we observe that
\[
p(d_3 \Delta[3]\times\Delta[n])\cin d_{\{0,1,\ldots,j\}} \Delta[n] \cin \Lambda^{0,j}[n]
\]
because $j<n$ (since $\{0,j\}\cin \{0,1,\ldots,n\}$ is broken). We also have
\[
p(d_1 \Delta[3]\times \Delta[n])\cin d_{\{0,j,j+1,\ldots,n\}} \Delta[n] \cin \Lambda^{0,j}[n]
\]
because $j>1$ (so $d_{\{0,j,j+1,\ldots,n\}} \Delta[n]\cin d_1 \Delta[n]$).

To see that $\Delta[3]\times \Lambda^{0,j}[n]$ lands in $\Lambda^{0,j}[n]$, first take $0<k<j$. We then have
\[
p(\Delta[3]\times d_k \Delta[n])\cin d_k \Delta[n] \cin \Lambda^{0,j}[n]
\]
because the only vertex of $\Delta[3]\times \Delta[n]$ which maps to $k$ is $(1,k)$ which is not in $\Delta[3]\times d_k \Delta[n]$. Now if we take $j<k\leq n$, we similarly have
\[
p(\Delta[3]\times d_k \Delta[n])\cin d_k \Delta[n] \cin \Lambda^{0,j}[n]
\]
because the only vertex of $\Delta[3]\times \Delta[n]$ which maps to $k$ is $(3,k)$ which is not in $\Delta[3]\times d_k \Delta[n]$.
\end{proof}

We now turn to proving (d), for which we first need a lemma about pushout-joins.

\begin{lem}\label{lem:pushoutjoin}
Given an anodyne inclusion $f$ and an inner anodyne inclusion $g$, their pushout-joins $f\star g$ and $g\star f$ are 2-Segal anodyne.
\end{lem}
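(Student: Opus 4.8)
The plan is to reduce the statement to the generating horns and then observe that the pushout-join of two horns is, on the nose, a 2-Segal horn inclusion. Write $\Horn$ for the set of all horn inclusions and $\InnHorn$ for the inner ones, so that the anodyne maps are $\ol{\Horn}$ and the inner anodyne maps are $\ol{\InnHorn}$; I want to prove $\ol{\Horn}\star\ol{\InnHorn}\cin\ol{\TwoSegHorn}$ and $\ol{\InnHorn}\star\ol{\Horn}\cin\ol{\TwoSegHorn}$. Since pushout-join with a fixed cofibration is cocontinuous in each variable separately, for a fixed map $h$ the class $\{f: f\star h\in\ol{\TwoSegHorn}\}$ (and likewise $\{h: f\star h\in\ol{\TwoSegHorn}\}$) is saturated; this is the join-analogue of the containment $\ol{\mcS}\boxprod\ol{\mcT}\cin\ol{\mcS\boxprod\mcT}$ invoked for pushout-products earlier in Section \ref{sec:pushoutprod}. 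Saturating one variable at a time therefore reduces the lemma to the single claim that $f\star g$ is 2-Segal anodyne whenever $f$ is a horn inclusion and $g$ is an inner horn inclusion (and symmetrically).

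For the generator computation I would use the identification $\Delta[p]\star\Delta[q]\cong\Delta[p+q+1]$, under which the vertices of the first factor become the bottom ones and those of the second are shifted up. Taking $f=(\Lambda^i[p]\hookrightarrow\Delta[p])$ and $g=(\Lambda^k[q]\hookrightarrow\Delta[q])$ and writing $N=p+q+1$, the point is that each of $\Delta[q]\star\Lambda^i[p]$ and $\Lambda^k[q]\star\Delta[p]$ is a union of codimension-one faces of $\Delta[N]$: omitting a vertex in one join factor yields a face of $\Delta[N]$, and a horn omits exactly one face of its factor. A direct index count then shows that the source $(\Delta[q]\star\Lambda^i[p])\cup(\Lambda^k[q]\star\Delta[p])$ of $f\star g$ is precisely the generalized horn of $\Delta[N]$ missing the two faces $d_k$ and $d_{q+1+i}$; that is, $f\star g=\Lambda^{k,\,q+1+i}[N]$, and symmetrically $g\star f=\Lambda^{i,\,p+1+k}[N]$.

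It then remains to check that these two-face horns are genuine 2-Segal horns, i.e.\ that the two omitted indices are broken (non-adjacent mod $N+1$) in the sense of Definition \ref{def:broken}. For $\Lambda^{k,q+1+i}[N]$ the separation is $(q+1+i)-k\ge 2$ since $k\le q-1$ and $i\ge 0$, while the wrap-around pair $\{0,N\}$ cannot occur because the smaller index $k$ is positive; the analogous bookkeeping handles $\Lambda^{i,p+1+k}[N]$ using $k\ge 1$ and $k\le q-1$. In both cases the decisive input is that $g$ is an \emph{inner} horn, so its index satisfies $0<k<q$: it is exactly this that forbids the two missing faces from becoming adjacent or wrapping around to $\{0,N\}$, whereas $f$ is allowed to be an arbitrary (possibly outer) horn. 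This is the source of the asymmetry in the hypotheses, and a useful sanity check is that allowing $g$ to be an outer horn---say $k=q$ paired with $i=0$---produces the non-broken pair $\{0,N\}$, which is not a 2-Segal horn.

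I expect the only real friction to be the index bookkeeping in the second paragraph, compounded by the paper's pushout-join convention in which $(A\hookrightarrow B)\star(C\hookrightarrow D)$ has target $D\star B$, so the two factors appear in the opposite order from the naming. Once the faces $d_k$ and $d_{q+1+i}$ are correctly pinned down, the brokenness check is a short inequality argument, and the reduction in the first paragraph is formal. No new ideas beyond the join-of-horns calculation are needed: the pushout-join of an arbitrary horn with an inner horn is literally a single 2-Segal horn inclusion, which is slightly stronger than the stated conclusion.
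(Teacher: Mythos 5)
Your proposal is correct and follows essentially the same route as the paper: reduce by saturation to the case where both maps are generating horns, then identify the pushout-join of a horn with an inner horn as a single 2-Segal horn inclusion, with the innerness of $g$ supplying exactly the brokenness (separation $\geq 2$ and no wrap-around to $\{0,N\}$) of the two omitted indices. The only cosmetic difference is that you follow the order of factors in the paper's displayed definition of pushout-join, whereas the paper's own proof computes with the source $(\Lambda^i[n]\star\Delta[k])\cup(\Delta[n]\star\Lambda^j[k])\hookrightarrow\Delta[n]\star\Delta[k]$ and lands on $\Lambda^{i,\,n+j+1}[n+k+1]$; the two computations differ only by a relabeling and both are right.
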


\begin{proof}
By symmetry, it suffices to show that $f\star g$ is 2-Segal anodyne. Furthermore, it suffices to consider the special case where $f$ is a horn inclusion $\Lambda^i[n]\to \Delta[n]$ and $g$ is an inner horn inclusion $\Lambda^j[k]\hookrightarrow \Delta[k]$. In this case, the pushout-join
\[
(\Lambda^i[n]\star \Delta[k])\cup(\Delta[n]\star\Lambda^j[k])\longhookrightarrow \Delta[n]\star \Delta[k]
\]
turns out to be isomorphic to the 2-Segal horn inclusion
\[
\Lambda^{i,n+j+1}[n+k+1]\hookrightarrow \Delta[n+k+1].
\]
To see why, we start with the fact that $\Delta[n]\star\Delta[k]\cong \Delta[n+k+1]$. We then observe for $0\leq \ell\leq n$ that $d_{\ell}\Delta[n]\star \Delta[k]$ corresponds to the $d_{\ell}$ face of $\Delta[n+k+1]$, and similarly for $0\leq \ell\leq k$ that $\Delta[n]\star d_{\ell}\Delta[k]$ corresponds to the $d_{n+\ell+1}$ face of $\Delta[n+k+1]$. We therefore have that $\Lambda^i[n]\star \Delta[k]$ is the union of the $d_0$ to $d_n$ faces of $\Delta[n+k+1]$ except for $d_i$ and that $\Delta[n]\star\Lambda^j[k]$ is the union of the $d_{n+1}$ to $d_{n+k+1}$ faces of $\Delta[n+k+1]$ except for $d_{n+j+1}$, hence they together form $\Lambda^{i,n+j+1}[n+k+1]$, which is 2-Segal because $0<j<k$.
\end{proof}

\begin{rmk}
The argument in the previous lemma also works to show that if $f$ and $g$ are both left anodyne or both right anodyne then their pushout-join is 2-Segal anodyne, since $i$ and $n+j+1$ are not adjacent modulo $n+k+1$ in those cases as well.
\end{rmk}

We are now ready to prove (d). In fact, we prove a stronger statement, and (d) is the special case when $A\hookrightarrow B$ is $\Lambda^1[2]\hookrightarrow \Delta[2]$.

\begin{prop}
Given an inner anodyne map $A\hookrightarrow B$, the pushout-products
\[
(\partial\Delta[n]\hookrightarrow \Delta[n])\boxprod (\Delta[0]\star A \hookrightarrow \Delta[0]\star B)
\]
and
\[
(\partial\Delta[n]\hookrightarrow \Delta[n])\boxprod (A\star \Delta[0] \hookrightarrow B\star \Delta[0])
\]
are 2-Segal anodyne.
\end{prop}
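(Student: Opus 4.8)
The plan is to reduce to the generating inner horns, recognize the relevant joined maps as honest 2-Segal horns, and then fill the product by an explicit filtration. By the evident symmetry (reversing the order of each $[m]$) it is enough to treat the left-join statement, so fix $n$ and set
\[
\mcC=\{h\in\Mono : (\partial\Delta[n]\hookrightarrow\Delta[n])\boxprod h \text{ is 2-Segal anodyne}\}.
\]
Because pushout-product with a fixed map corresponds under adjunction to a right-lifting condition and the 2-Segal anodyne maps form a saturated class, $\mcC$ is saturated. Moreover $\Delta[0]\star(-)$ preserves monomorphisms, retracts, and connected colimits: from the join formula one has $(\Delta[0]\star Y)_m\cong \ast\sqcup\coprod_{j=0}^{m}Y_j$, which as a functor of $Y$ is a constant summand glued to a finite coproduct of evaluation functors, hence preserves pushouts and transfinite compositions. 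It follows that the class of $f$ with $\Delta[0]\star f\in\mcC$ is itself saturated, and since the inner anodyne maps are generated by the inner horn inclusions, it suffices to show $\Delta[0]\star f\in\mcC$ for a single inner horn inclusion $f=(\Lambda^j[k]\hookrightarrow\Delta[k])$ with $0<j<k$.

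For such an $f$ the face computation for joins from Section~\ref{sec:pathspacecriterion} gives
\[
\Delta[0]\star(\Lambda^j[k]\hookrightarrow\Delta[k])=(\Lambda^{0,j+1}[k+1]\hookrightarrow\Delta[k+1]),
\]
and dually $(\Lambda^j[k]\hookrightarrow\Delta[k])\star\Delta[0]=(\Lambda^{j,k+1}[k+1]\hookrightarrow\Delta[k+1])$; the pairs $\{0,j+1\}$ and $\{j,k+1\}$ are broken in $\{0,\dots,k+1\}$ precisely because $0<j<k$, so both are genuine 2-Segal horn inclusions. The whole proposition is thus reduced to showing that $(\partial\Delta[n]\hookrightarrow\Delta[n])\boxprod(\Lambda^{0,j+1}[k+1]\hookrightarrow\Delta[k+1])$ is 2-Segal anodyne.

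This last point is the heart of the argument, and I would carry it out by the method of anodyne extensions applied to the product $\Delta[n]\times\Delta[k+1]$ over the pushout-product domain $D=(\partial\Delta[n]\times\Delta[k+1])\cup(\Delta[n]\times\Lambda^{0,j+1}[k+1])$. The non-degenerate simplices lying outside $D$ are exactly the chains $\sigma$ in the poset $[n]\times[k+1]$ whose first projection is all of $[n]$ and whose second projection contains $\{0,\dots,k+1\}\smallsetminus\{0,j+1\}$. I would order these by dimension and then by a secondary statistic (as in Joyal's proof that inner horns generate under pushout-product with boundaries), attaching them one at a time so that each attachment is a pushout of a horn whose two omitted faces are the face in the $\partial\Delta[n]$-direction (coming from the missing vertex of $[n]$) and a face in a $\Lambda^{0,j+1}$-direction (coming from position $0$ or $j+1$ of $[k+1]$).

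The hard part will be the combinatorial bookkeeping: choosing the ordering so that at each stage exactly those two faces are missing, and then verifying that their indices are non-adjacent modulo the ambient simplex size, i.e. that the attaching horn is broken and hence 2-Segal. This non-adjacency is exactly what the hypothesis $0<j<k$ buys, propagated through the shuffle combinatorics, with the low-dimensional and boundary cases (where an attaching horn would degenerate, or where $\sigma$ happens to meet $D$ in additional faces) checked separately. Once every attachment is identified as a 2-Segal horn inclusion, the inclusion $D\hookrightarrow\Delta[n]\times\Delta[k+1]$ is exhibited as a composite of pushouts of 2-Segal horns, hence 2-Segal anodyne, completing the reduction and the proof.
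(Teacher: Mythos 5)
Your opening reduction is sound: the class of monomorphisms $h$ for which $(\partial\Delta[n]\hookrightarrow\Delta[n])\boxprod h$ is 2-Segal anodyne is saturated, $\Delta[0]\star(-)$ preserves the saturation operations, and the identifications $\Delta[0]\star(\Lambda^j[k]\hookrightarrow\Delta[k])\cong(\Lambda^{0,j+1}[k+1]\hookrightarrow\Delta[k+1])$ and $(\Lambda^j[k]\hookrightarrow\Delta[k])\star\Delta[0]\cong(\Lambda^{j,k+1}[k+1]\hookrightarrow\Delta[k+1])$ are correct, with brokenness exactly reflecting $0<j<k$. But at that point what remains --- that $(\partial\Delta[n]\hookrightarrow\Delta[n])\boxprod(\Lambda^{0,j+1}[k+1]\hookrightarrow\Delta[k+1])$ is 2-Segal anodyne --- is essentially the entire content of the proposition, and your treatment of it is a program rather than a proof. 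You describe a cell-by-cell filtration of $\Delta[n]\times\Delta[k+1]$ and then explicitly defer the decisive step (``the hard part will be the combinatorial bookkeeping''). That bookkeeping is not routine: attaching a single $m$-simplex $\sigma$ along a 2-Segal horn $\Lambda^{a,b}[m]$ adds \emph{four} new non-degenerate simplices ($\sigma$, the facets $d_a\sigma$ and $d_b\sigma$, and their common codimension-2 face, which lies in neither of the retained facets), so your ordering must partition every non-degenerate simplex outside the pushout-product domain into such quadruples, arrange that at each stage exactly those cells are absent, and verify non-adjacency of the two omitted indices every time. None of this is exhibited, and it is not evident that a filtration of this exact shape exists; membership in the saturated class only guarantees a retract of such a composite in general.

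For comparison, the paper's proof avoids the shuffle combinatorics altogether. It decomposes $\Delta[n]\times(\Delta[0]\star B)$ into pieces $Z_i\cong\Delta[i]\star(\Delta[n-i]\times B)$ glued along $W_{i+1,i}\cong\Delta[i]\star(\Delta[n-i-1]\times B)$, and identifies each gluing step as a pushout-join
\[
(\Lambda^i[i]\hookrightarrow\Delta[i])\star\bigl((\partial\Delta[n-i]\hookrightarrow\Delta[n-i])\boxprod(A\hookrightarrow B)\bigr),
\]
which is 2-Segal anodyne by Lemma~\ref{lem:pushoutjoin} because the right-hand factor is inner anodyne by the classical quasi-category fact that $\Bdry\boxprod\InnHorn$ generates inner anodynes. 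In other words, the paper imports the hard product combinatorics from quasi-category theory via the join, whereas your route would require redoing (and nontrivially modifying) it for two-face horns. To repair your argument, either carry out and verify the full filtration in your last step, or replace it with a join-based decomposition of the kind above.
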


\begin{proof}
By symmetry, it suffices to prove the first claim, so we would like to show that the inclusion
\[
(\partial \Delta[n]\times (\Delta[0]\star B))\cup (\Delta[n]\times (\Delta[0]\star A))\longhookrightarrow (\Delta[n]\times (\Delta[0]\star B))
\]
is 2-Segal anodyne. Let us denote these simplicial sets by $Y=(\partial \Delta[n]\times (\Delta[0]\star B))\cup (\Delta[n]\times (\Delta[0]\star A))$ and $Z=\Delta[n]\times (\Delta[0]\star B)$.

We begin by observing that $Z$ decomposes into a set of pieces $Z_0,Z_1,\ldots,Z_n$, where $Z_i$ is the full subcomplex spanning (the set of vertices of)
\[
(d_{\{0,1,\ldots,i\}}\Delta[n]\times \ast)\cup (d_{\{i,i+1,\ldots,n\}}\Delta[n]\times B).
\]

Here is a schematic of what these pieces are for $n=2$:




\[
\adjustbox{scale=0.7}{
\begin{tikzcd}
	&& {Z_2} &&&& {Z_1} &&&&& {Z_0} \\
	0 &&&& 0 &&&&& 0 &&& B \\
	& 1 & {} &&& 1 &&& B &&&&& B \\
	2 &&& B &&&& B &&&&& B & \ .
	\arrow[from=2-1, to=3-2]
	\arrow[from=3-2, to=4-1]
	\arrow[from=2-1, to=4-1]
	\arrow[from=3-2, to=4-4]
	\arrow[from=4-1, to=4-4]
	\arrow[from=2-1, to=4-4]
	\arrow[from=2-5, to=3-6]
	\arrow[from=3-9, to=4-8]
	\arrow[from=2-5, to=4-8]
	\arrow[from=2-5, to=3-9]
	\arrow[from=3-6, to=4-8]
	\arrow[from=2-10, to=2-13]
	\arrow[from=2-13, to=3-14]
	\arrow[from=3-14, to=4-13]
	\arrow[from=2-13, to=4-13]
	\arrow[from=2-10, to=4-13]
	\arrow[crossing over, from=3-6, to=3-9]
	\arrow[crossing over, from=2-10, to=3-14]
\end{tikzcd}
}
\]

For each $0\leq i<n$, denote by $W_{i+1,i}$ the intersection of $Z_i$ and $Z_{i+1}$, which is the full subcomplex spanning (the set of vertices of)
\[
(d_{\{0,1,\ldots,i\}}\Delta[n]\times \ast)\cup (d_{\{i+1,i+2,\ldots,n\}}\Delta[n]\times B).
\]
Note that for any other $0\leq i <j\leq n$, the intersection $Z_j\cap Z_i$ is contained in $W_{\ell+1,\ell}$ for each $i\leq \ell<j$, so in the following poset of subcomplexes of $\Delta[n]\times(\Delta[0]\star B)$



\[\begin{tikzcd}[column sep=tiny]
	{Z_n} && {Z_{n-1}} && {Z_{i+1}} && {Z_i} && {Z_1} && {Z_0} \\
	& {W_{n,n-1}} && \cdots && {W_{i+1,i}} && \cdots && {W_{1,0}}
	\arrow[from=2-2, to=1-1]
	\arrow[from=2-2, to=1-3]
	\arrow[from=2-4, to=1-3]
	\arrow[from=2-4, to=1-5]
	\arrow[from=2-6, to=1-5]
	\arrow[from=2-6, to=1-7]
	\arrow[from=2-8, to=1-7]
	\arrow[from=2-8, to=1-9]
	\arrow[from=2-10, to=1-9]
	\arrow[from=2-10, to=1-11]
\end{tikzcd}\]
each intersection $(Z_n\cup Z_{n-1}\cup \ldots Z_{i+1})\cap Z_i$ is still $W_{i+1,i}$.

We now observe that each $Z_i$ is isomorphic to $\Delta[i]\star (\Delta[n-i]\times B)$ while $W_{i+1,i}$ is isomorphic to $\Delta[i]\star (\Delta[n-i-1]\times B)$. We can more suggestively write $W_{i+1,i} \rightarrow Z_i\leftarrow W_{i,i-1}$ as



\[
\adjustbox{scale=0.73}{\begin{tikzcd}[column sep=tiny]
	&&& {(d_{\{0,\ldots,i\}}\Delta[n])\star(d_{\{i,\ldots,n\}}\Delta[n]\times B)} \\
	{} && {(d_{\{0,\ldots,i\}}\Delta[n])\star(d_{\{i+1,\ldots,n\}}\Delta[n]\times B)} && {(d_{\{0,\ldots,i-1\}}\Delta[n])\star(d_{\{i,\ldots,n\}}\Delta[n]\times B)\nospace{.}}
	\arrow[from=2-3, to=1-4]
	\arrow[from=2-5, to=1-4]
\end{tikzcd}}\]

We now use the breakdown above to describe how to build $Z$ from $Y=(\partial \Delta[n]\times (\Delta[0]\star B))\cup (\Delta[n]\times (\Delta[0]\star A))$. We do so by gluing in the missing part of each $Z_i$ one at a time, starting with $Z_n$ and working down to $Z_0$.

Let $Y_n$ denote the intersection of $Y$ with $Z_n$. We would like to see that the inclusion $Y_n\hookrightarrow Z_n$ is 2-Segal anodyne. In the case $i=n$, the isomorphism noted above reduces to $Z_n\cong \Delta[n]\star B$. The pieces of $Y_i$ from $\Delta[n]\times (d_{\{n\}}\Delta[n]\star A)$ correspond to $\Delta[n]\star A\cin \Delta[n]\star B$ and the pieces of $Y_i$ from $\partial \Delta[n]\times (d_{\{n\}}\Delta[n]\star B)$ correspond to $(d_n\Delta[n]\star B)\cup(\Lambda^n[n]\star B)$. To see the latter claim, take $0\leq \ell<n$, then $(d_{\ell}\Delta[n]\times B)\cap Z_n$ corresponds to $d_{\ell} \Delta[n]\star B$ in $\Delta[n]\star B$. Meanwhile, the intersection of $d_n\Delta[n]\times B$ with $Z_n$ is only $d_n\Delta[n]\cin \Delta[n]\star B$. Altogether, we see that $Y_n\hookrightarrow Z_n$ is isomorphic to the pushout-join $(\Lambda^n[n]\hookrightarrow \Delta[n])\star(A\hookrightarrow B)$, which we know is 2-Segal anodyne by Lemma \ref{lem:pushoutjoin} because $A\hookrightarrow B$ is inner anodyne.

Now, assume we have glued in $Z_n,Z_{n-1},\ldots, Z_{i+1}$ for some $0<i<n$. Let $Y_i$ denote the intersection of $Z_i$ with $Y\cup W_{i,i+1}$, which is precisely the part of $Z_i$ which has not been glued in yet. We claim that $Y_i\hookrightarrow Z_i$ is isomorphic to the inclusion of
\[
\left(\Lambda^i[i]\star (\Delta[n-i]\times B)\right)\cup\left(\Delta[i]\star (\partial\Delta[n-i]\times B)\right)\cup\left(\Delta[i]\star (\Delta[n-i]\times A)\right)
\]
into $\Delta[i]\star (\Delta[n-i]\times B)$, which is precisely the inclusion
\[
(\Lambda^i[i]\hookrightarrow \Delta[i])\star\left((\partial\Delta[n-i]\hookrightarrow\Delta[n-i])\boxprod (A\hookrightarrow B)\right),
\]
which is 2-Segal anodyne by Lemma \ref{lem:pushoutjoin} because $(\partial\Delta[n-i]\hookrightarrow\Delta[n-i])\boxprod (A\hookrightarrow B)$ is inner anodyne. Let us justify our claim. First, the part of $Z_i$ in $\Delta[n]\times(\Delta[0]\star A)$ corresponds to $\Delta[i]\star (\Delta[n-i]\times A)$. For $0\leq \ell <i$, the intersection of $Z_i$ with $d_{\ell} \Delta[n]\times (\Delta[0]\star B)$ corresponds to $d_{\ell}\Delta[i]\star (\Delta[n-i]\times B)$, which together form $\Lambda^i[i]\star (\Delta[n-i]\times B)$. For $i<\ell\leq n$, the intersection of $Z_i$ with $d_{\ell} \Delta[n]\times (\Delta[0]\star B)$ corresponds to $\Delta[i]\star (d_{\ell-i}\Delta[n-i]\times B)$, which together form $\Delta[i]\star (\Lambda^0 [n-i]\times B)$. The remaining piece is $\Delta[i]\star (d_{0}\Delta[n-i]\times B)$, which corresponds to $W_{i+1,i}$.

Suppose now that we have glued in each $Z_i$ except for $Z_0$, and let $Y_0$ denote the intersection of $Z_0$ with $Y\cup W_{0,1}$, which is precisely the part of $Z_0$ which has not been glued in yet. A similar check as for $i>0$ shows that $Y_0\hookrightarrow Z_0$ is isomorphic to
\[
\Delta[0]\star\left((\partial\Delta[n]\hookrightarrow \Delta[n])\boxprod (A\hookrightarrow B)\right).
\]
\end{proof}

We have now proved our desired proposition.

\begin{prop}\label{prop:genbypushoutprodoftriangs}
The class of 2-Segal anodyne maps is generated by the set
\[
\left((\Lambda^{0,2}[3]\hookrightarrow \Delta[3])\boxprod \Bdry\right)\cup \left((\Lambda^{1,3}[3]\hookrightarrow \Delta[3])\boxprod \Bdry\right),
\]
as well as by the class
\[
\left((\Lambda^{0,2}[3]\hookrightarrow \Delta[3])\boxprod \Mono\right)\cup \left((\Lambda^{1,3}[3]\hookrightarrow \Delta[3])\boxprod \Mono\right).
\]
\end{prop}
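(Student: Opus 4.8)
The plan is to read off the statement from the four containments (a)--(d) drawn in the square above, using the transitivity principle recorded in Subsection~\ref{sub:lifts}: whenever $\mcS\cin\ol{\mcT}$ we have $\ol{\mcS}\cin\ol{\mcT}$. Abbreviate the bottom-left set by $\mcP=\Delta[0]\star\InnHorn\cup\InnHorn\star\Delta[0]$, the top-right set by $\mcB=(\Lambda^{0,2}[3]\boxprod\Bdry)\cup(\Lambda^{1,3}[3]\boxprod\Bdry)$, and the bottom-right class by $\mcM=(\Lambda^{0,2}[3]\boxprod\Mono)\cup(\Lambda^{1,3}[3]\boxprod\Mono)$. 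With this notation the four arrows assert $\TwoSegHorn\cin\ol{\mcP}$ (arrow (a)), $\mcP\cin\ol{\mcM}$ (arrow (b)), $\mcM\cin\ol{\mcB}$ (arrow (c)), and $\mcB\cin\ol{\TwoSegHorn}$ (arrow (d)). Applying the transitivity principle to each arrow turns this cycle into the chain of inclusions $\ol{\TwoSegHorn}\cin\ol{\mcP}\cin\ol{\mcM}\cin\ol{\mcB}\cin\ol{\TwoSegHorn}$, which forces all four saturated closures to coincide. In particular $\ol{\mcB}=\ol{\TwoSegHorn}$ and $\ol{\mcM}=\ol{\TwoSegHorn}$, and since $\ol{\TwoSegHorn}$ is by definition the class of 2-Segal anodyne maps, this is exactly the claim that both $\mcB$ and $\mcM$ generate it.

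It therefore remains only to invoke the four arrows, three of which are already established. Arrow (a) is the packaging from Section~\ref{sec:pathspacecriterion}: the horns $\Lambda^{0,j}[n]\hookrightarrow\Delta[n]$ are precisely $\Delta[0]\star\InnHorn$ and those of the form $\Lambda^{i,n}[n]\hookrightarrow\Delta[n]$ are precisely $\InnHorn\star\Delta[0]$, and the proposition preceding Theorem~\ref{thm:pathspacecriterion} exhibits every 2-Segal horn as a retract of a composite of pushouts of these, so $\TwoSegHorn\cin\ol{\mcP}$. Arrow (c) is formal: using $\ol{\Bdry}=\Mono$ together with $\ol{\mcS}\boxprod\ol{\mcT}\cin\ol{\mcS\boxprod\mcT}$ gives $\Lambda^{0,2}[3]\boxprod\Mono\cin\ol{\Lambda^{0,2}[3]\boxprod\Bdry}$ and likewise for $\Lambda^{1,3}[3]$, hence $\mcM\cin\ol{\mcB}$. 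Arrow (b) is the first proposition of this section: each $\Lambda^{0,j}[n]\hookrightarrow\Delta[n]$ (resp.\ $\Lambda^{i,n}[n]\hookrightarrow\Delta[n]$) is a monomorphism and so furnishes an element of $\Lambda^{0,2}[3]\boxprod\Mono$ (resp.\ $\Lambda^{1,3}[3]\boxprod\Mono$) of which it is a retract, whence $\mcP\cin\ol{\mcM}$.

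The single substantial input is arrow (d), supplied by the last proposition of this section specialized to $A\hookrightarrow B$ equal to $\Lambda^1[2]\hookrightarrow\Delta[2]$. Here one uses the identifications $\Delta[0]\star\Lambda^1[2]\cong\Lambda^{0,2}[3]$ and $\Lambda^1[2]\star\Delta[0]\cong\Lambda^{1,3}[3]$ (each missing exactly the two non-adjacent faces predicted by the join) together with the symmetry of $\boxprod$; ranging over all $n\geq 0$, the two conclusions of that proposition become exactly that $\Lambda^{0,2}[3]\boxprod\Bdry$ and $\Lambda^{1,3}[3]\boxprod\Bdry$ are 2-Segal anodyne, i.e.\ $\mcB\cin\ol{\TwoSegHorn}$. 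I expect this arrow to be the real obstacle of the whole development, since it relies on filtering $\Delta[n]\times(\Delta[0]\star B)$ into the pieces $Z_i$ and recognizing each attaching map as a pushout-join of a horn inclusion with an inner anodyne map via Lemma~\ref{lem:pushoutjoin}; but once that proposition is granted, the present statement is the purely formal assembly described in the first paragraph, the only care being the two join identifications and the symmetry of the pushout-product.
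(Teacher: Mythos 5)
Your proposal is correct and follows the paper's own argument essentially verbatim: the paper likewise assembles the proposition from the cycle of containments (a)--(d), with (a) coming from Section~\ref{sec:pathspacecriterion}, (c) being the formal closure argument, (b) being the retract proposition at the start of Section~\ref{sec:pushoutprod}, and (d) being the pushout-join/filtration proposition specialized to $\Lambda^1[2]\hookrightarrow\Delta[2]$ via the identifications $\Delta[0]\star\Lambda^1[2]\cong\Lambda^{0,2}[3]$ and $\Lambda^1[2]\star\Delta[0]\cong\Lambda^{1,3}[3]$. You have also correctly identified arrow (d) as the only substantive input.
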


Using the adjoint correspondence discussed in the beginning of the section, we have the following corollary.

\begin{cor}\label{cor:contractiblespace}
A simplicial set $X$ is quasi-2-Segal if and only if the maps
\[
\Map(\Delta[3],X)\to \Map(\Lambda^{0,2}[3],X)\quad\text{ and }\quad \Map(\Delta[3],X)\to \Map(\Lambda^{1,3}[3],X)
\]
are trivial fibrations.
\end{cor}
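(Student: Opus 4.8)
The plan is to unwind the definition of quasi-2-Segal set into a right lifting condition and then transport it across the pushout-product adjunction, using the generating set supplied by Proposition~\ref{prop:genbypushoutprodoftriangs}. First I would recall that $X$ is quasi-2-Segal precisely when the map $X\to \ast$ has the right lifting property against every 2-Segal horn inclusion, and that (Subsection~\ref{sub:lifts}) a map has the right lifting property against a set if and only if it does against the saturated class that set generates. Hence $X$ is quasi-2-Segal if and only if $X\to\ast$ lifts against every 2-Segal anodyne map. By Proposition~\ref{prop:genbypushoutprodoftriangs}, the 2-Segal anodyne maps are generated by
\[
\left((\Lambda^{0,2}[3]\hookrightarrow \Delta[3])\boxprod \Bdry\right)\cup \left((\Lambda^{1,3}[3]\hookrightarrow \Delta[3])\boxprod \Bdry\right),
\]
so this is in turn equivalent to requiring that $X\to\ast$ lift against each pushout-product $(\Lambda^{0,2}[3]\hookrightarrow\Delta[3])\boxprod(\partial\Delta[n]\hookrightarrow\Delta[n])$ and $(\Lambda^{1,3}[3]\hookrightarrow\Delta[3])\boxprod(\partial\Delta[n]\hookrightarrow\Delta[n])$ for all $n\geq 0$.

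Next I would apply the adjoint correspondence of lifting problems displayed at the start of this section, taking the target map to be $X\to\ast$, the pair $A\hookrightarrow B$ to be $\Lambda^{0,2}[3]\hookrightarrow\Delta[3]$ (and symmetrically $\Lambda^{1,3}[3]\hookrightarrow\Delta[3]$), and $C\hookrightarrow D$ to be $\partial\Delta[n]\hookrightarrow\Delta[n]$. Since $\ast$ is terminal, $\Map(A,\ast)$ and $\Map(B,\ast)$ are both a point, so the fiber product $\Map(A,X)\times_{\Map(A,\ast)}\Map(B,\ast)$ collapses to $\Map(A,X)$, and the associated map is simply $\Map(\Delta[3],X)\to\Map(\Lambda^{0,2}[3],X)$. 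Thus $X\to\ast$ lifts against $(\Lambda^{0,2}[3]\hookrightarrow\Delta[3])\boxprod(\partial\Delta[n]\hookrightarrow\Delta[n])$ for every $n$ if and only if $\Map(\Delta[3],X)\to\Map(\Lambda^{0,2}[3],X)$ has the right lifting property against every boundary inclusion, i.e.\ against $\Bdry$; the same holds verbatim for $\Lambda^{1,3}[3]$. Finally, a map of simplicial sets has the right lifting property against $\Bdry$ if and only if it has it against $\ol{\Bdry}=\Mono$, which is exactly the statement that it is a trivial fibration. Combining the two families yields the corollary.

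The bulk of the work is already carried by Proposition~\ref{prop:genbypushoutprodoftriangs}, so no genuinely hard step remains here; the points requiring the most care are bookkeeping rather than conceptual. Specifically, one must correctly simplify the fiber-product target of the adjoint correspondence in the case of a terminal codomain, and one must keep straight which pushout-product family corresponds to which mapping-space restriction (the $\Lambda^{0,2}[3]$ family governing $\Map(\Delta[3],X)\to\Map(\Lambda^{0,2}[3],X)$ and the $\Lambda^{1,3}[3]$ family the other). I would also make explicit the identification of ``right lifting property against $\Bdry$'' with ``trivial fibration,'' noting that this notion is unambiguous because the cofibrations in every Cisinski structure on $\sSet$ are the monomorphisms, so trivial fibrations are exactly the maps with the right lifting property against $\Mono$.
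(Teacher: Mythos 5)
Your proposal is correct and follows the same route as the paper: the paper derives this corollary directly from Proposition~\ref{prop:genbypushoutprodoftriangs} via the adjoint correspondence of lifting problems displayed at the start of Section~\ref{sec:pushoutprod}, exactly as you do. Your additional bookkeeping (collapsing the fiber product when the codomain is terminal, and identifying RLP against $\Bdry$ with being a trivial fibration via $\ol{\Bdry}=\Mono$) just makes explicit what the paper leaves implicit.
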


The following corollary is one of our ingredients for producing a model structure in the next section.

\begin{cor}\label{cor:2seganodyneclosedunderpushoutprod}
The pushout-product of a 2-Segal anodyne map with a monomorphism is again 2-Segal anodyne, i.e.,
\[
\ol{\TwoSegHorn}\boxprod \Mono \cin \ol{\TwoSegHorn}.
\]
\end{cor}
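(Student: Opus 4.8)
The plan is to reduce the statement to Proposition \ref{prop:genbypushoutprodoftriangs} using the two standard facts about pushout-products recalled in Section \ref{sec:pushoutprod}: that $\ol{\mcS}\boxprod\ol{\mcT}\cin\ol{\mcS\boxprod\mcT}$ for any sets of maps $\mcS,\mcT$ (the fact behind arrow (c) earlier), and that the pushout-product is associative up to isomorphism, i.e.\ $(f\boxprod g)\boxprod h\cong f\boxprod(g\boxprod h)$, this being the monoidal structure on the arrow category of $(\sSet,\times)$.

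First I would record that, by the $\Bdry$-generated version of Proposition \ref{prop:genbypushoutprodoftriangs}, we have $\ol{\TwoSegHorn}=\ol{\mcG_0}$, where
\[
\mcG_0=\left((\Lambda^{0,2}[3]\hookrightarrow\Delta[3])\boxprod\Bdry\right)\cup\left((\Lambda^{1,3}[3]\hookrightarrow\Delta[3])\boxprod\Bdry\right),
\]
and that $\Mono=\ol{\Bdry}$. Applying the first fact with $\mcS=\mcG_0$ and $\mcT=\Bdry$ gives
\[
\ol{\TwoSegHorn}\boxprod\Mono=\ol{\mcG_0}\boxprod\ol{\Bdry}\cin\ol{\mcG_0\boxprod\Bdry},
\]
so it suffices to show $\mcG_0\boxprod\Bdry\cin\ol{\TwoSegHorn}$.

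Next I would analyze a typical element of $\mcG_0\boxprod\Bdry$. Such a map has the form $f\boxprod g$ where $g\in\Bdry$ and $f=(\Lambda^{\alpha}[3]\hookrightarrow\Delta[3])\boxprod h$ for some $h\in\Bdry$ and $\alpha\in\{(0,2),(1,3)\}$. By associativity of the pushout-product,
\[
f\boxprod g\cong (\Lambda^{\alpha}[3]\hookrightarrow\Delta[3])\boxprod(h\boxprod g).
\]
Since the pushout-product of two monomorphisms is again a monomorphism, $h\boxprod g$ lies in $\Mono$, and hence $(\Lambda^{\alpha}[3]\hookrightarrow\Delta[3])\boxprod(h\boxprod g)$ lies in $\left((\Lambda^{0,2}[3]\hookrightarrow\Delta[3])\boxprod\Mono\right)\cup\left((\Lambda^{1,3}[3]\hookrightarrow\Delta[3])\boxprod\Mono\right)\cin\ol{\TwoSegHorn}$, this time by the $\Mono$-generated version of Proposition \ref{prop:genbypushoutprodoftriangs}. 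This shows $\mcG_0\boxprod\Bdry\cin\ol{\TwoSegHorn}$, and since the saturated closure of a subclass of a saturated class is contained in that class, $\ol{\mcG_0\boxprod\Bdry}\cin\ol{\TwoSegHorn}$, completing the argument.

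The main point to get right is the bookkeeping: one must invoke both forms of Proposition \ref{prop:genbypushoutprodoftriangs}---the $\Bdry$-version to describe $\ol{\TwoSegHorn}$ as $\ol{\mcG_0}$, and the $\Mono$-version to reabsorb $(\Lambda^{\alpha}[3]\hookrightarrow\Delta[3])\boxprod(h\boxprod g)$---and correctly apply associativity to relocate the two boundary inclusions into a single monomorphism. Everything else is formal manipulation of saturated classes, so I expect no genuine combinatorial obstacle beyond confirming the associativity isomorphism for pushout-products, which is standard.
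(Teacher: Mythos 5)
Your proof is correct and follows essentially the same route as the paper's: both arguments rewrite $\ol{\TwoSegHorn}$ via the $\Bdry$-version of Proposition \ref{prop:genbypushoutprodoftriangs}, use $\ol{\mcS}\boxprod\ol{\mcT}\cin\ol{\mcS\boxprod\mcT}$ together with associativity of the pushout-product and the fact that pushout-products preserve monomorphisms, and then reabsorb the result using the $\Mono$-version of the same proposition. Your write-up simply makes explicit the associativity step that the paper leaves implicit.
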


\begin{proof}
Using Proposition \ref{prop:genbypushoutprodoftriangs}, we have
\[
\ol{\TwoSegHorn}\boxprod \Mono = \ol{\left((\Lambda^{0,2}[3]\hookrightarrow \Delta[3])\cup (\Lambda^{0,2}[3]\hookrightarrow \Delta[3])\right)\boxprod \Bdry}\boxprod \Mono,
\]
which is contained in
\[
\ol{\left((\Lambda^{0,2}[3]\hookrightarrow \Delta[3])\cup (\Lambda^{0,2}[3]\hookrightarrow \Delta[3])\right)\boxprod \Bdry\boxprod \Mono}.
\]
Since the pushout-product preserves monomorphisms, this class is contained in
\[
\ol{\left((\Lambda^{0,2}[3]\hookrightarrow \Delta[3])\cup (\Lambda^{0,2}[3]\hookrightarrow \Delta[3])\right)\boxprod \Mono},
\]
which is precisely $\ol{\TwoSegHorn}$ by Proposition \ref{prop:genbypushoutprodoftriangs}.
\end{proof}

As a further corollary, we see that mapping spaces of quasi-2-Segal sets are themselves quasi-2-Segal sets, generalizing another fundamental property of quasi-categories.

\begin{cor}
Given a simplicial set $X$ and a quasi-2-Segal set $Y$, the simplicial set $\Map(X,Y)$ is quasi-2-Segal.
\end{cor}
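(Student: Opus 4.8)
The plan is to verify directly that $\Map(X,Y)$ has fillers of every 2-Segal horn inclusion, which amounts to showing that the unique map $\Map(X,Y)\to \ast$ has the right lifting property with respect to each $\Lambda^{i,j}[n]\hookrightarrow \Delta[n]$. First I would transfer this lifting problem across the product--mapping-space adjunction, using the correspondence of lifting problems for pushout-products recorded earlier. Taking the ``$A\hookrightarrow B$'' there to be $\emptyset \hookrightarrow X$, the ``$C\hookrightarrow D$'' to be the 2-Segal horn $\Lambda^{i,j}[n]\hookrightarrow \Delta[n]$, and the target map ``$X\to Y$'' to be $Y\to \ast$, the relevant pushout-product is $(\emptyset\hookrightarrow X)\boxprod (\Lambda^{i,j}[n]\hookrightarrow \Delta[n])$, which is simply $X\times \Lambda^{i,j}[n]\hookrightarrow X\times\Delta[n]$. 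Since $\Map(\emptyset,-)$ and $\Map(-,\ast)$ are terminal, the fiber product on the mapping-space side of the correspondence collapses to $\ast$, so the correspondence says precisely that $\Map(X,Y)\to \ast$ has the right lifting property with respect to $\Lambda^{i,j}[n]\hookrightarrow \Delta[n]$ if and only if $Y\to \ast$ has the right lifting property with respect to $X\times\Lambda^{i,j}[n]\hookrightarrow X\times\Delta[n]$.

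Next I would invoke the structural input from Corollary~\ref{cor:2seganodyneclosedunderpushoutprod}. The inclusion $\emptyset\hookrightarrow X$ is a monomorphism and $\Lambda^{i,j}[n]\hookrightarrow \Delta[n]$ is 2-Segal anodyne, so their pushout-product $X\times\Lambda^{i,j}[n]\hookrightarrow X\times\Delta[n]$ lies in $\ol{\TwoSegHorn}$. Because $Y$ is a quasi-2-Segal set, $Y\to \ast$ has the right lifting property with respect to $\TwoSegHorn$, and hence with respect to the entire saturated class $\ol{\TwoSegHorn}$ that it generates. In particular $Y\to \ast$ lifts against $X\times\Lambda^{i,j}[n]\hookrightarrow X\times\Delta[n]$, so by the correspondence of the previous paragraph $\Map(X,Y)\to\ast$ lifts against $\Lambda^{i,j}[n]\hookrightarrow\Delta[n]$. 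As this holds for every 2-Segal horn, $\Map(X,Y)$ is quasi-2-Segal.

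The only real content of the argument is already packaged in Corollary~\ref{cor:2seganodyneclosedunderpushoutprod}; the remaining work is purely formal adjunction bookkeeping. The one point worth checking carefully is that the mapping-space side of the correspondence genuinely degenerates to $\ast$, i.e.\ that the fiber product $\Map(\emptyset,Y)\times_{\Map(\emptyset,\ast)}\Map(X,\ast)$ is terminal, which is immediate since $\emptyset$ is initial and $\ast$ is terminal. No induction or combinatorial horn analysis is needed here, in contrast to the proofs earlier in the section, so I expect this to be a short formal corollary rather than an argument with a genuine obstacle.
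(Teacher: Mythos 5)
Your argument is correct and is essentially identical to the paper's: both apply Corollary~\ref{cor:2seganodyneclosedunderpushoutprod} to the pushout-product of $\varnothing\hookrightarrow X$ with a 2-Segal anodyne map to conclude that $A\times X\hookrightarrow B\times X$ is 2-Segal anodyne, and then transfer the lifting problem across the product--mapping-space adjunction. Your extra check that the fiber product on the mapping-space side collapses to $\ast$ is a fine (if routine) point of care that the paper leaves implicit.
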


\begin{proof}
Given a 2-Segal anodyne map $A\hookrightarrow B$, the inclusion $A\times X\hookrightarrow B\times X$ is also 2-Segal anodyne by applying Corollary \ref{cor:2seganodyneclosedunderpushoutprod} with the inclusion $\varnothing \hookrightarrow X$. Therefore, given a lifting problem as on the right below,



\[
\begin{tikzcd}
	{A\times X} & Y &&& A & {\Map(X,Y)} \\
	{B\times X} & \ast & {} & {} & B & \ast\nospace{,}
	\arrow[from=2-1, to=2-2]
	\arrow[from=1-2, to=2-2]
	\arrow[from=1-1, to=1-2]
	\arrow[from=1-1, to=2-1]
	\arrow[dotted, from=2-1, to=1-2]
	\arrow[shift left=5, squiggly, tail reversed, from=2-3, to=2-4]
	\arrow[from=1-6, to=2-6]
	\arrow[from=1-5, to=1-6]
	\arrow[from=2-5, to=2-6]
	\arrow[from=1-5, to=2-5]
	\arrow[dotted, from=2-5, to=1-6]
\end{tikzcd}
\]
there is a lift because we can solve the adjoint lifting problem on the left.
\end{proof}

\section{The model structure for quasi-2-Segal sets}\label{sec:modelstructure}

In this section, we show that quasi-2-Segal sets have fillers of $J$-augmented horn inclusions, which is analogous to the special outer horn lifting property of quasi-categories. By combining this result with our previous results, we get a model structure for quasi-2-Segal sets. We then use our model structure to prove a quasi-2-Segal version of the edgewise subdivision criterion from \cite{BOORS:edgewise}.

\subsection{Augmented horn lifting}

Recall that $J$ is the nerve of the free-living isomorphism (which has two objects and exactly one morphism in every hom set), and that a $J$-augmented horn inclusion $\Lambda^i[n]^J_{j\to j+1}\hookrightarrow \Delta[n]^J_{j\to j+1}$ is an ordinary horn inclusion $\Lambda^i[n]\hookrightarrow \Delta[n]$ for $n\geq 2$ and $0\leq i\leq n$ with a copy of $J$ glued in along the $j\to j+1$ edge, where $j=i-1$ or $j=i$. We think of $\Delta[n]^J_{j\to j+1}$ as a homotopy from the $d_{j+1}$ face to the $d_j$ face, and that the $J$-augmented horn is a homotopy of the boundaries of those faces.

We begin by showing that in a quasi-2-Segal set we can ``invert'' a given homotopy $\Delta[n]^J_{j\to j+1}$, and in fact that we may (coherently) specify the inverse of any subset of its faces $d_k \Delta[n]^J_{j\to j+1}$ for $k\neq j,j+1$ (which are each isomorphic to $\Delta[n-1]^J_{j'\to j'+1}$).

\begin{lem}\label{lem:invertinghomotopy}
Let $X$ be a quasi-2-Segal set. Given $n\geq 2$ and an $n$-simplex $h$ of $X$ whose $j\to j+1$ edge is the $0\to 1$ edge of some $a\colon J\to X$, there exists an $(n+2)$-simplex $H$ where $d_{\{j,j+1,j+2,j+3\}}H$ is the 3-simplex $0\to 1\to 0\to 1$ of $a\colon J\to X$, and where $d_{j+1}H$ is $s_j h$ and $d_{j+2}H=s_{j+1}h$. Furthermore, we are free to specify any number of the faces $d_k H$ for $0\leq k<j$ and $j+3<k\leq n$ as long as they agree with the given data and with each other.
\end{lem}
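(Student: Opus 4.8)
The plan is to produce $H$ as a filler of a single generalized 2-Segal horn inclusion. First I would picture the intended $H$ concretely: label the vertices of $\Delta[n+2]$ by $0,\dots,n+2$ and think of $H$ as the simplex obtained from $h$ by inserting, just after the invertible edge, two new vertices $j+2,j+3$ carrying the two objects of $a$, so that the four vertices $j,j+1,j+2,j+3$ receive the pattern $0,1,0,1$ of $\mathbb{I}$. With this picture the target faces are forced: deleting $j+1$ collapses the first inserted vertex, giving $s_jh$; deleting $j+2$ gives $s_{j+1}h$; and the face on $\{j,j+1,j+2,j+3\}$ is exactly the $3$-simplex $0\to1\to0\to1$ of $a$. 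I would then build $H$ by filling a generalized $2$-Segal horn $\Lambda^S[n+2]$ whose missing faces include $d_j$ and $d_{j+3}$ but not $d_{j+1}$ or $d_{j+2}$.

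On this horn I would prescribe $d_{j+1}H=s_jh$ and $d_{j+2}H=s_{j+1}h$ together with at least one further \emph{carrier} face $d_mH$ with $m\notin\{j,j+1,j+2,j+3\}$, chosen to realize the $a$-structure on the four special vertices; the carrier is what pins down the inverse edge $1\to0$ of $a$ (the edge $j+1\to j+2$ of $H$), which is not seen by either degenerate face. The argument would proceed by induction on $n$. In the base case $n=2$ the carrier is a codimension-one face with exactly the four vertices $j,\dots,j+3$, so it is literally the $3$-simplex of $a$ and no further input is needed. For $n\ge 3$ I would take $m=j+4$ (or $m=j-1$ when $j=n-1$) and define $d_mH$ by applying the lemma inductively to the $(n-1)$-simplex $d_{j+2}h$ (resp.\ $d_{j-1}h$), which still carries the invertible edge; the resulting $(n+1)$-simplex has exactly the degenerate faces $s_j(d_{j+2}h)$ and $s_{j+1}(d_{j+2}h)$ and the $a$-face demanded of a carrier.

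The heart of the argument is checking that these prescribed faces agree on overlaps, a short computation with the simplicial identities: from $d_{j+1}s_{j+1}=\id=d_{j+1}s_j$ one sees that $s_jh$ and $s_{j+1}h$ glue along their common face $h$, and from $d_{j+3}s_j=s_jd_{j+2}$ and $d_{j+3}s_{j+1}=s_{j+1}d_{j+2}$ one sees that the carrier $d_{j+4}H$ glues to both degenerate faces. The missing-face set $S$ is automatically broken: it contains $j$ and $j+3$ but omits both $j+1$ and $j+2$ and the carrier index $m$, so it is not a cyclic arc and Definition~\ref{def:broken} applies. Hence $\Lambda^S[n+2]\hookrightarrow\Delta[n+2]$ is a generalized $2$-Segal horn inclusion, and since $X$ is quasi-$2$-Segal it has a filler $H$; the required identities for $d_{j+1}H$, $d_{j+2}H$ and the face on $\{j,j+1,j+2,j+3\}$ hold because each is either a prescribed face or a face of the prescribed carrier. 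The ``furthermore'' clause is handled in the same breath: any faces $d_kH$ with $k<j$ or $k>j+3$ that one wishes to prescribe are simply added to the horn data (shrinking $S$, which stays broken), and the inductive freedom in building the carrier keeps everything compatible.

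The hard part will be twofold. First, one must ensure the inverse edge of $a$ is genuinely realized; this is precisely where invertibility (that $a$ factors through $J$) is used, and it forces the presence of a carrier face containing all four special vertices rather than relying on the two degenerate faces alone. Second, threading the ``free to specify'' flexibility through the induction — so that user-prescribed faces, the carrier, and the degenerate faces remain mutually compatible — is the bookkeeping that makes the statement more than a one-line horn fill; getting the indexing of the simplicial identities right, and handling the symmetric case $j=n-1$, is where I would be most careful.
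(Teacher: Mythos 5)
Your proposal is correct and follows essentially the same route as the paper: induction on $n$, with the base case a single $2$-Segal horn $\Lambda^{j,j+3}[4]$ whose remaining outer face is forced to be the $3$-simplex of $a$, and the inductive step a generalized $2$-Segal horn fill on the faces $d_{j+1}=s_jh$, $d_{j+2}=s_{j+1}h$ plus one carrier face supplied by the inductive hypothesis, with compatibility checked by the simplicial identities and the ``furthermore'' clause absorbed by enlarging the horn. Your version is, if anything, slightly more careful about which face of $h$ the carrier is built from (the paper applies the inductive hypothesis to $d_kh$ for an arbitrary admissible $k$, while you pin down $d_{j+2}h$ at position $j+4$, resp.\ $d_{j-1}h$ when $j=n-1$), but the argument is the same.
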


\begin{proof}
We proceed by induction on $n$. For the base case $n=2$, we form a 2-Segal horn $\Lambda^{j, j+3}[4]\to X$ as follows: for $k\not\in \{j,j+1,j+2,j+3\}$ (so $k=0$ if $j=1$ and $k=4$ if $j=0$) we let $d_k\Lambda^{j, j+3}[4]$ be the $0\to 1\to 0\to 1$ face of $a\colon J\to X$, while we let $d_{j+1}\Lambda^{j, j+3}[4]$ be $s_j h$ and let $d_{j+2}\Lambda^{j, j+3}[4]=s_{j+1}h$. Because $X$ is quasi-2-Segal, this 2-Segal horn has a filler $H\colon \Delta[4]\to X$ which satisfies the desired conditions by construction because $d_k H$ is precisely $d_{\{j,j+1,j+2,j+3\}}H$ in this case, which also shows that the additional claim does not add anything for this case.

Now assume the hypothesis holds for some $n\geq 2$. Given an $(n+1)$-simplex $h$ of $X$ whose $j\to j+1$ edge is the $0\to 1$ edge of some $a\colon J\to X$, take any $0\leq k<j$ or $j+3<k\leq n$ and apply the inductive hypothesis to $d_k h$, yielding an $(n+2)$-simplex $H'$. We then form the generalized 2-Segal horn $\Lambda^{\{k,j+1,j+2\}}[n+3]\to X$ where the $d_k$ face is $H'$ and we let the $d_{j+1}$ face be $s_j h$ and let the $d_{j+2}$ face be $s_{j+1}h$. The filler $H\colon \Delta[n+3]\to X$ then satisfies the desired conditions by construction. To specify a certain subset of the faces of $H$ corresponding to a set of indices $S\cin \{0,1,\ldots,j-1,j+4,j+5,\ldots,n\}$, rather than apply the inductive hypothesis we instead form the generalized 2-Segal horn $\Lambda^{S\cup\{j+1,j+2\}}[n+3]\to X$ where the $d_{j+1}$ and $d_{j+2}$ are as before, but the $d_k$ faces for $k\in S$ are those we are specifying.
\end{proof}

\begin{prop}[$J$-augmented horn lifting]\label{prop:augmentedhornlifting}
Every quasi-2-Segal set has fillers of $J$-augmented horn inclusions.
\end{prop}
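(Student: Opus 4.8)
The plan is to mimic the proof that quasi-categories satisfy special outer horn lifting, using the inverting homotopies supplied by Lemma~\ref{lem:invertinghomotopy} in place of the explicit composition-and-inversion arguments available in quasi-categories. First I would cut down the bookkeeping using the order-reversing self-duality of $\Delta$, which carries $\Lambda^i[n]^J_{j\to j+1}$ to $\Lambda^{n-i}[n]^J_{(n-j-1)\to(n-j)}$ and preserves brokenness (hence 2-Segal anodyneness), so that it suffices to treat a single normal form for the relation between $i$ and $j$, say $j=i$; the case $j=i-1$ then follows by duality. The governing idea is that the $J$-augmentation along the edge $j\to j+1$, which is adjacent to the omitted face $d_i$, should let us trade the single missing face of an ordinary horn for a genuine generalized 2-Segal horn one dimension up: although $\Lambda^i[n]$ by itself is never a 2-Segal horn --- it omits only one face, and when $0<i<n$ its two neighbours are adjacent --- the invertibility witnessed by $a\colon J\to X$ fills in exactly the extra face that would otherwise make the configuration illegal.

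Concretely, I would proceed by induction on $n$, paralleling the structure of the proof of Lemma~\ref{lem:invertinghomotopy}. Given the $J$-augmented horn $\Lambda^i[n]^J_{i\to i+1}\to X$, I would assemble an auxiliary $(n+1)$-simplex as the filler of a generalized 2-Segal horn $\Lambda^{S}[n+1]\to X$ whose faces are built from three kinds of data: the given horn faces $d_k$ for $k\neq i$; the witness $a$, supplying the inverse edge, the degenerate faces it forces, and the alternating $2$- and $3$-simplices of $a$ recording invertibility along $i\to i+1$; and an application of Lemma~\ref{lem:invertinghomotopy} to the appropriate lower-dimensional face carrying the invertible edge, used to pin down the remaining faces coherently (this is where the degeneracies $s_j h$, $s_{j+1}h$ enter). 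The two faces omitted from $\Lambda^{S}[n+1]$ would be chosen non-adjacent, with one of them the face whose restriction is the sought-after $d_i$, so that filling this 2-Segal horn and then restricting its filler along the appropriate coface $d^{\ell}$ yields the $n$-simplex extending the original $J$-augmented horn. In the inductive step, the faces of $\Lambda^{S}[n+1]$ lying away from the invertible edge would be supplied by the inductive hypothesis applied to the corresponding faces of the horn, exactly as in Lemma~\ref{lem:invertinghomotopy}.

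The main obstacle, and the part demanding real care, is the combinatorial verification that this auxiliary horn is legitimate and coherent: I must check that the two omitted faces of $\Lambda^{S}[n+1]$ form a broken pair (so that it is genuinely 2-Segal and hence fillable in $X$ by the results of Section~\ref{sec:2segalhorns}), that the prescribed faces agree on all of their common sub-faces, and that the $J$-data fit together into a single consistent map --- precisely the point at which the freedom to prescribe faces granted by Lemma~\ref{lem:invertinghomotopy} is used. The delicate issue is that the invertible edge sits adjacent to $d_i$, so the degenerate faces and the alternating $J$-simplices of $a$ must be slotted in so as to \emph{separate} the two genuinely missing faces; confirming that this separation always produces a broken index set, uniformly in $i$ and $n$, is the crux of the argument.
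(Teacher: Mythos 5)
Your proposal follows essentially the same route as the paper: after reducing to $j=i$, one builds a 2-Segal horn $\Lambda^{i,i+2}[n+1]\to X$ whose $d_{i+1}$ face is the degeneracy $s_i d_{i+1}\lambda$ and whose remaining faces are the inverting homotopies supplied by Lemma~\ref{lem:invertinghomotopy} applied to the faces of $\lambda$ (made mutually compatible using that lemma's freedom to prescribe faces), and then restricts the filler along $d^{i+2}$ to obtain the desired $n$-simplex. The combinatorial point you flag resolves immediately --- the omitted pair $\{i,i+2\}$ is broken in $\{0,\dots,n+1\}$ for every $n\geq 2$ --- and no separate induction on $n$ is needed at the level of the proposition itself, since Lemma~\ref{lem:invertinghomotopy} already carries the induction.
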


\begin{proof}
Let $X$ be a quasi-2-Segal set, and take a horn $\lambda\colon\Lambda^i[n]\to X$ for some $n\geq 2$, $0\leq i\leq n$, and whose $j\to j+1$ edge is the $0\to 1$ edge of some $a\colon J\to X$ for either $j=i-1$ or $j=i$. Without loss of generality assume $j=i$.

If $n=2$, again without loss of generality assume $j=0$. Then we can form the 2-Segal horn $\Lambda^{0,2}[3]\to X$ where the $d_3$ face is the 2-simplex $0\to 1\to 0$ of $J$ and the $d_1$ face is $s_1 d_1 \lambda$. This 2-Segal horn has a filler $\tau$ where $d_2 \tau$ is a filler for the original horn $\lambda$.

Now assume $n\geq 3$. Our goal is to construct a 2-Segal horn $\lambda'\colon\Lambda^{i,i+2}[n+1]\to X$ which restricts to $\Lambda^i[n]$ along $d^{i+2}$. First, we let the $d_{i+1}$ face be $s_i d_{i+1}\lambda$. Then, for each $k<i$, we apply the previous lemma to $d_k \lambda$ to get an $(n+1)$-simplex $H_k$ and let $d_k\lambda'=H_k$, and similarly for $k>i+1$ we apply the lemma to $d_k\lambda$ to get $H_k$ and let $d_{k+1}\lambda'=H_k$. We may ensure that these faces agree on their intersection by building each $H_k$ one at a time and specifying its faces, as allowed by the lemma.
\end{proof}

\begin{rmk}
The special outer horn lifting property is usually stated in terms of the edge $0\to 1$ or the edge $(n-1)\to n$ being an \emph{equivalence} in our quasi-category $X$, meaning it becomes an isomorphism in the homotopy category of $X$, rather than extending to a copy of $J$. However, these two conditions are the same in quasi-categories. Given that fact, our approach here gives a more direct proof of the special outer horn lifting property of quasi-categories than in previous sources, such as the original \cite{Joyal:published}, whose arguments involve more abstract methods, transferring the special horn lifting problem to an adjoint lifting problem.
\end{rmk}

\begin{rmk}
A missing ingredient for making this result a true generalization of special outer horn lifting is a proof that an edge in a quasi-2-Segal set $X$ extends to some $J\to X$ if and only if it becomes an ``isomorphism'' in the ``homotopy 2-Segal set''. While something along these lines might be turn out to be true, we see in Appendix \ref{sub:rmkhtpy} that the idea of the ``homotopy 2-Segal set of a quasi-2-Segal set'' does not work as nicely as that of the homotopy category of a quasi-category does.
\end{rmk}

\subsection{The model structure}

Recall from \cite{Feller:generalizing} the notion of a \emph{homotopically-behaved model structure}, which is a Cisinski model structure on $\sSet$ whose fibrant objects have fillers of all $J$-augmented horn inclusions. One of our main results from that paper is a relatively simple criterion for a set of inclusions to yield such a model structure.

\begin{cor}\label{cor:otherpaper}\cite{Feller:generalizing}
Given a set of monomorphisms $S$ of simplicial sets such that the set of pushout-products $S\boxprod (\partial\Delta[1]\hookrightarrow \Delta[1])$
is contained in $\ol{S}$, there exists a homotopically-behaved model structure on $\sSet$ whose fibrant objects are those with lifts against $S$ and all $J$-augmented horn inclusions.
\end{cor}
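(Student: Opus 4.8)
The plan is to produce the model structure via Cisinski's theory of model structures on presheaf categories: one fixes an exact cylinder on $\sSet$ together with a set of generating anodyne maps, forms the saturated class they generate, and---provided that class satisfies the axioms of a class of anodyne extensions---invokes Cisinski's existence theorem to obtain a cofibrantly generated model structure in which the cofibrations are exactly the monomorphisms and the fibrant objects are exactly the objects with the right lifting property against the generating anodyne maps. Here I would take the generating anodyne maps to be $S$ together with the $J$-augmented horn inclusions, so that the candidate class of trivial cofibrations is $\overline{S\cup\mcH}$, where $\mcH$ denotes the set of $J$-augmented horn inclusions. The $J$-augmented horns play the role of the elementary anodyne maps attached to the interval $J$, so that once the existence theorem applies, the resulting model structure is automatically homotopically behaved: its fibrant objects, being those with the right lifting property against $\overline{S\cup\mcH}$, in particular have fillers of all $J$-augmented horn inclusions, and admit exactly the lifts against $S$ demanded in the statement.

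The one substantive axiom to verify is that $\overline{S\cup\mcH}$ is stable under pushout-product with monomorphisms, i.e.\ $\overline{S\cup\mcH}\boxprod\Mono\subseteq\overline{S\cup\mcH}$; the remaining axioms (saturation, and containing the interval's endpoint maps) hold by construction. Because $-\boxprod m$ preserves pushouts, transfinite composites, and retracts for each fixed monomorphism $m$, it suffices to check this on the generators, that is, to show $(S\cup\mcH)\boxprod\Mono\subseteq\overline{S\cup\mcH}$. For the piece coming from $\mcH$, this is the assertion that the $J$-augmented horn inclusions are stable under pushout-product with monomorphisms, which is intrinsic to the interval $J$ and is exactly the content that the framework of \cite{Feller:generalizing} is built to supply. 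The piece coming from $S$ is where the hypothesis enters: I would use $\Mono=\overline{\Bdry}$ together with the compatibility $\overline{A}\boxprod\overline{B}\subseteq\overline{A\boxprod B}$ recalled in Section~\ref{sec:pushoutprod} to reduce $S\boxprod\Mono$ to $S\boxprod\Bdry$, i.e.\ to controlling $s\boxprod(\partial\Delta[n]\hookrightarrow\Delta[n])$ for every $s\in S$ and every $n$, and then leverage the single hypothesis $S\boxprod(\partial\Delta[1]\hookrightarrow\Delta[1])\subseteq\overline{S}$ to bootstrap from $n=1$ to all $n$.

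The main obstacle is precisely this bootstrapping. One cannot simply realize $\partial\Delta[n]\hookrightarrow\Delta[n]$ as a retract of the $n$-fold pushout-power of $\partial\Delta[1]\hookrightarrow\Delta[1]$ inside $\overline{S}$ alone: that pushout-power lives over the cube $\Delta[1]^{\times n}$, and the evident section $\Delta[n]\hookrightarrow\Delta[1]^{\times n}$ carries boundary faces such as $d_{\{0,n\}}$ onto diagonal cells absent from the cube's boundary, so the naive retract fails already at $n=2$. These extra cells must be absorbed, and the mechanism for absorbing them is exactly the elementary anodyne maps of the interval---here the $J$-augmented horns---so that the correct containment is $s\boxprod(\partial\Delta[n]\hookrightarrow\Delta[n])\in\overline{S\cup\mcH}$ rather than in $\overline{S}$. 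Making this precise amounts to Cisinski's cylinder argument, which shows that the Frobenius-type pushout-product condition for a saturated class generated by $S$ can be checked against the single map $\partial\Delta[1]\hookrightarrow\Delta[1]$ once the interval's anodyne maps are adjoined; carrying this out in the present setting is the heart of \cite{Feller:generalizing}. Once the pushout-product condition is established, Cisinski's existence theorem delivers the model structure, and the identification of its fibrant objects with the simplicial sets admitting lifts against $S$ and against all $J$-augmented horn inclusions is then immediate.
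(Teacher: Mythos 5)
The paper does not prove this statement: it is imported wholesale from \cite{Feller:generalizing}, so there is no internal proof to compare against. Judged on its own terms, your outline is the right one --- the cited result is obtained from Cisinski's machinery for the cylinder $J\times(-)$, with candidate class of anodyne extensions $\ol{S\cup\mcH}$ where $\mcH$ denotes the $J$-augmented horn inclusions --- and your diagnosis of why the naive retract of the pushout-power $(\partial\Delta[1]\hookrightarrow\Delta[1])^{\boxprod n}$ fails (the diagonal cells of the cube, e.g.\ the edge $(0,0)\to(1,1)$ for $n=2$, do not lie in the cube's boundary) is correct and is exactly the reason the $J$-augmented horns must be adjoined to absorb those cells.

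As a proof, however, the proposal has genuine gaps, and they sit precisely where the content of the corollary lives. First, your claim that the axiom of ``containing the interval's endpoint maps'' holds ``by construction'' is not right: Cisinski's axiom requires $(\{e\}\hookrightarrow J)\boxprod\Mono\cin\ol{S\cup\mcH}$, and since $J$-augmented horns are only defined for $n\geq 2$, not even the map $\{0\}\hookrightarrow J$ (the case of $\varnothing\hookrightarrow\Delta[0]$) is visibly in this class; establishing such memberships is substantive and is what the iso-horn analysis of \cite{Feller:minimal} is for (compare Definition \ref{def:isohorns} and Lemma \ref{lem:isohornpushoutsof2seg} in the present paper, which carry out an argument of exactly this flavor for 2-Segal horns). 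Second, the two steps you correctly isolate as essential --- stability of $\mcH$ under pushout-product with $\Bdry$, and the bootstrap from $S\boxprod(\partial\Delta[1]\hookrightarrow\Delta[1])\cin\ol{S}$ to $S\boxprod\Bdry\cin\ol{S\cup\mcH}$ --- are asserted and then deferred to \cite{Feller:generalizing} rather than argued. Deferring to that reference is what the paper itself does, so as a citation your write-up is defensible; but as a proof it does not go beyond recasting the statement in Cisinski's language and correctly locating the difficulty.
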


We showed that the set $S=\TwoSegHorn$ satisfies the hypothesis of Corollary \ref{cor:otherpaper} in Corollary \ref{cor:2seganodyneclosedunderpushoutprod}, which means that we have a homotopically-behaved model structure on $\sSet$ where the fibrant objects are precisely the quasi-2-Segal sets with fillers of $J$-augmented horn inclusions. However, we saw in Proposition \ref{prop:augmentedhornlifting} that all quasi-2-Segal sets have fillers of $J$-augmented horn inclusions, so the fibrant objects of this model structure are in fact precisely the quasi-2-Segal sets. Furthermore, Proposition \ref{prop:spinesandhornsweakequivalences} implies that this model structure is the localization of the minimal model structure at the set of 2-Segal spine inclusions. We have therefore proved the following theorem.

\begin{thm}\label{thm:modelstructure}
There exists a Cisinski model structure on $\sSet$ whose fibrant objects are precisely the quasi-2-Segal sets. This model structure is the localization of the minimal model structure at the set of 2-Segal spine inclusions, and it is homotopically-behaved in the sense of \cite{Feller:generalizing}.
\end{thm}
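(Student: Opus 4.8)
The plan is to assemble the theorem from the machinery built in the previous sections: apply the transfer criterion of Corollary~\ref{cor:otherpaper} to the set $S=\TwoSegHorn$ of 2-Segal horn inclusions, strip away the auxiliary $J$-augmented horn condition, and finally identify the resulting structure as a localization. Since the genuinely technical work has already been done, the theorem is largely a bookkeeping step.

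First I would verify the hypothesis of Corollary~\ref{cor:otherpaper}, which requires $S\boxprod(\partial\Delta[1]\hookrightarrow\Delta[1])\subseteq\overline{S}$. As $\partial\Delta[1]\hookrightarrow\Delta[1]$ is a monomorphism and $\TwoSegHorn\subseteq\overline{\TwoSegHorn}$, this containment is immediate from Corollary~\ref{cor:2seganodyneclosedunderpushoutprod}, which gives $\overline{\TwoSegHorn}\boxprod\Mono\subseteq\overline{\TwoSegHorn}$. Corollary~\ref{cor:otherpaper} then produces a homotopically-behaved Cisinski model structure on $\sSet$ whose fibrant objects are exactly the simplicial sets admitting lifts against both $\TwoSegHorn$ and all $J$-augmented horn inclusions. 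Next I would remove the $J$-augmented condition: by definition a simplicial set lifts against $\TwoSegHorn$ precisely when it is a quasi-2-Segal set, and Proposition~\ref{prop:augmentedhornlifting} shows every quasi-2-Segal set automatically lifts against all $J$-augmented horn inclusions, so the two conditions coincide. This already yields the first and last sentences of the theorem.

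Finally, to identify this model structure $\mathcal{M}$ with the localization $L$ of the minimal model structure at the 2-Segal spine inclusions, I would invoke the fact that a Cisinski model structure is determined by its fibrant objects, reducing the task to matching fibrant objects. In $\mathcal{M}$ the 2-Segal horn inclusions are trivial cofibrations, so by Proposition~\ref{prop:spinesandhornsweakequivalences} the 2-Segal spine inclusions are weak equivalences in $\mathcal{M}$; since the weak equivalences of any Cisinski structure contain those of the minimal one, $\mathcal{M}$ is a localization of the minimal structure at the spines and $W_L\subseteq W_{\mathcal{M}}$. Conversely, in $L$ the spine inclusions are weak equivalences, so again by Proposition~\ref{prop:spinesandhornsweakequivalences} the 2-Segal horn inclusions are trivial cofibrations in $L$; hence every $L$-fibrant object is quasi-2-Segal, while every quasi-2-Segal set, being $\mathcal{M}$-fibrant, lifts against all $\mathcal{M}$-trivial cofibrations and in particular against the minimal trivial cofibrations and the spines that generate the trivial cofibrations of $L$, so it is $L$-fibrant. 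The two classes of fibrant objects coincide, whence $\mathcal{M}=L$.

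The main subtlety I anticipate lies precisely in this bidirectional comparison of fibrant objects, where one must keep track of which maps are trivial cofibrations in which structure and use Proposition~\ref{prop:spinesandhornsweakequivalences} in both directions. Everything else reduces to a direct citation of the already-established closure of 2-Segal anodynes under pushout-product and of the $J$-augmented horn lifting property, so I expect no new difficulty beyond careful assembly.
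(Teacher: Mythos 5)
Your proof is correct and follows the paper's argument exactly: apply Corollary~\ref{cor:otherpaper} to $S=\TwoSegHorn$ (the hypothesis being supplied by Corollary~\ref{cor:2seganodyneclosedunderpushoutprod}), use Proposition~\ref{prop:augmentedhornlifting} to drop the $J$-augmented horn condition, and invoke Proposition~\ref{prop:spinesandhornsweakequivalences} to identify the result as the localization of the minimal model structure at the 2-Segal spine inclusions, with the last step merely spelled out in more detail than the paper bothers to. One small imprecision: the spines do not literally ``generate the trivial cofibrations of $L$'' (localizations are defined via weak equivalences, not by saturating a generating set), but your earlier observation that $W_L\subseteq W_{\mathcal{M}}$ with identical cofibrations already shows every $L$-trivial cofibration is an $\mathcal{M}$-trivial cofibration, which is all that direction needs.
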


By \cite[Prop.~E.1.10]{Joyal:theory}, a model structure is uniquely determined by its cofibrations and fibrant objects, so the model structure in Theorem \ref{thm:modelstructure} is necessarily unique. A more complete description of this model structure, including a characterization of the weak equivalences, comes from Cisinski's general theory; see \cite[Def.~2.4.18]{Cisinski:Cambridge}.

\subsection{Edgewise subdivision}

We may use our model structure to prove a quasi-2-Segal set analogue of the edgewise subdivision criterion from \cite{BOORS:edgewise}, which says that a simplicial object is 2-Segal if and only if its edgewise subdivision is Segal. Recall that there is a functor $\varepsilon\colon \Delta\to\Delta$ which sends $[n]$ to $[n]^{\op}\star [n]\cong [2n+1]$, which induces a functor $\esd\colon\sSet\to \sSet$ sending a simplicial set $X$ to its \emph{edgewise subdivision} $\esd X=X\circ \varepsilon$. The goal of this subsection is to show that a simplicial set $X$ is a quasi-2-Segal set if and only if its edgewise subdivision is a quasi-category.

We begin by unpacking what it means for the edgewise subdivision to be a quasi-category. An $n$-simplex of the edgewise subdivision $\esd X$ corresponds to a $(2n+1)$-simplex of $X$, with the $d_{\ell}$ face corresponding to the $d_{n-\ell} d_{n+1+\ell}$ face of that $(2n+1)$-simplex. A horn $\Lambda^i[n]$ in the edgewise subdivision therefore corresponds to a map $A^i[2n+1]\to X$ where $A^i[2n+1]\hookrightarrow \Delta[2n+1]$ is the union of each $d_{n-\ell} d_{n+1+\ell}$ face for $0\leq \ell <i$ and $i<\ell\leq n$. We therefore see that the edgewise subdivision being a quasi-category is equivalent to the simplicial set $X$ having lifts of each $A^i[2n+1]\to X$ for every $n\geq 2$ and $0<i<n$. We also observe that the $j\to j+1$ edge of an $n$-simplex in the edgewise subdivision corresponds to the 3-simplex $(n-j-1) \to (n-j)\to (n+j+1)\to (n+j+2)$ of $\Delta[2n+1]$, so the spine of the $n$-simplex corresponds to the union $I[2n+1]$ of those 3-simplices for $0\leq j<n$. With these preliminary observations, we are ready to prove the edgewise subdivision criterion.

\begin{prop}[Edgewise subdivision criterion]\label{prop:edgewise}
A simplicial set $X$ is a quasi-2-Segal set if and only if its edgewise subdivision is a quasi-category.
\end{prop}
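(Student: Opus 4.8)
The plan is to turn the lifting condition for $\esd X$ into a lifting condition on $X$ itself and then to compare the two resulting classes of maps. The edgewise subdivision $\esd=\varepsilon^{\ast}$ is a right adjoint, with left adjoint $\varepsilon_!$ sending $\Delta[n]$ to $\Delta[2n+1]$ and, as the computation preceding the statement records, sending the inner horn inclusion $\Lambda^i[n]\hookrightarrow\Delta[n]$ to $A^i[2n+1]\hookrightarrow\Delta[2n+1]$. By adjunction $\esd X$ is a quasi-category exactly when $X$ has the right lifting property against the set $\mcA=\{A^i[2n+1]\hookrightarrow\Delta[2n+1] : n\geq 2,\ 0<i<n\}$. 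Since $X$ is quasi-$2$-Segal exactly when it lifts against $\TwoSegHorn$, and since right lifting against a set depends only on its saturated closure, the entire statement reduces to the single claim $\ol{\mcA}=\ol{\TwoSegHorn}$.

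For the inclusion $\mcA\cin\ol{\TwoSegHorn}$ I would show each $A^i[2n+1]\hookrightarrow\Delta[2n+1]$ is $2$-Segal anodyne. The starting point is the explicit description of the subcomplex: a simplex $T\cin\{0,1,\ldots,2n+1\}$ lies in $A^i[2n+1]$ if and only if it avoids both vertices of at least one antipodal pair $\{n-\ell,\,n+1+\ell\}$ with $\ell\neq i$. One then attaches the missing simplices of $\Delta[2n+1]$ to $A^i[2n+1]$ in a suitable order, one at a time, realizing each attachment as a pushout of a generalized $2$-Segal horn inclusion, which is itself $2$-Segal anodyne by Proposition \ref{prop:generalizedhornsarepushouts}. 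The real content is to check that at each stage the faces of the attached simplex that are already present form a broken subset in the sense of Definition \ref{def:broken}; this is exactly where it matters that the excised faces are indexed by \emph{antipodal} pairs rather than arbitrary pairs of vertices, since their symmetric placement about the central pair $\{n,n+1\}$ is what produces the required gap.

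For the reverse inclusion $\ol{\TwoSegHorn}\cin\ol{\mcA}$ I would show that every $2$-Segal horn inclusion lies in $\ol{\mcA}$. Here it is convenient to invoke the generation from Section \ref{sec:pathspacecriterion}, where $\ol{\TwoSegHorn}$ is generated by $\Delta[0]\star\InnHorn\cup\InnHorn\star\Delta[0]$, so that it suffices to treat the $2$-Segal horns missing an outer face, namely those of the form $\Lambda^{0,j}[m]\hookrightarrow\Delta[m]$ and $\Lambda^{i,m}[m]\hookrightarrow\Delta[m]$. Each of these I would exhibit as a retract of a composite of pushouts of maps in $\mcA$, following the retract technique used in Sections \ref{sec:pathspacecriterion} and \ref{sec:pushoutprod}: one constructs an order-preserving section $\Delta[m]\to\Delta[2m'+1]$ carrying the outer-face horn into $A^{i'}[2m'+1]$, together with a retraction collapsing the auxiliary vertices, so that any filler upstairs restricts to the desired filler downstairs.

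The main obstacle, shared by both directions, is the combinatorics of $A^i[2n+1]$: keeping precise track of which faces are present and verifying the broken condition throughout. In the forward direction this is the global choice of attaching order for the many missing simplices of $\Delta[2n+1]$, while in the reverse direction it is the construction of an explicit section embedding an outer-face $2$-Segal horn inside the antipodal subcomplex $A^{i'}$. Once $\ol{\mcA}=\ol{\TwoSegHorn}$ is established, the biconditional of the proposition follows immediately.
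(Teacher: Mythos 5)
Your reverse direction is essentially the paper's argument: reduce via the generators $\Delta[0]\star\InnHorn\cup\InnHorn\star\Delta[0]$ of $\ol{\TwoSegHorn}$ to the outer-face $2$-Segal horns, and exhibit each $\Lambda^{0,j}[k]\hookrightarrow\Delta[k]$ as a retract of $A^{j-1}[2k-1]\hookrightarrow\Delta[2k-1]$; the paper writes down the section $f$ and retraction $p$ explicitly, and your sketch is a faithful outline of that. The problem is the forward direction. You reduce the whole proposition to the equality $\ol{\mcA}=\ol{\TwoSegHorn}$, but this is an over-reduction: two sets of monomorphisms can cut out the same class of objects with the right lifting property without having equal saturated closures, so the equality is sufficient for the proposition but not necessary, and you have replaced the statement by a strictly harder one. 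Concretely, the containment $\mcA\cin\ol{\TwoSegHorn}$ that your forward direction rests on is exactly the claim the paper flags (in the remark immediately following the proposition) as something it \emph{expects} but does not prove. Your sketch defers precisely the hard part --- ``check that at each stage the faces of the attached simplex that are already present form a broken subset'' --- without specifying the attaching order or verifying brokenness for a single nontrivial case, so as written there is no evidence the proposed filtration by generalized $2$-Segal horn pushouts exists.

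The paper gets around this by proving something weaker that still suffices: each $A^i[2n+1]\hookrightarrow\Delta[2n+1]$ is a \emph{weak equivalence} in the quasi-$2$-Segal model structure of Theorem~\ref{thm:modelstructure}, hence a trivial cofibration, hence lifts against the fibrant objects, which are exactly the quasi-$2$-Segal sets. The weak equivalence is established not by a cell-attachment argument on $A^i[2n+1]$ but by identifying the image $I[2n+1]$ of the spine under edgewise subdivision, running the argument of \cite[Lem.~3.5]{JT} to reduce the $A^i$'s to the $I$'s, observing that $I[2n+1]$ contains a triangulation $\mcT$ of the $(2n+2)$-gon (restrict each nondegenerate $3$-simplex along $\Lambda^{0,2}[3]\hookrightarrow\Delta[3]$), and applying $2$-out-of-$3$ to $\mcT\hookrightarrow I[2n+1]\hookrightarrow\Delta[2n+1]$. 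If you want to keep your purely combinatorial route, you must actually produce the filtration of $\Delta[2n+1]$ relative to $A^i[2n+1]$ and verify the broken condition at every stage; otherwise you should switch to the model-structure argument, which only needs ``weak equivalence'' rather than ``$2$-Segal anodyne.''
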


\begin{proof}
We begin by showing the forward implication. Because the quasi-2-Segal sets are the fibrant objects in our model structure, they have lifts of all inclusions which are weak equivalences. Therefore, to show that the edgewise subdivision of every quasi-2-Segal set is a quasi-category, it suffices to show that each $A^i[2n+1]\hookrightarrow \Delta[2n+1]$ is a weak equivalence in our model structure. However, the arguments in \cite[Lem.~3.5]{JT} apply just as readily to show that every $A^i[2n+1]\hookrightarrow \Delta[2n+1]$ (corresponding to an inner horn inclusion) is a weak equivalence if every $I[2n+1]\hookrightarrow \Delta[2n+1]$ is a weak equivalence. But restricting $I[2n+1]$ along $\Lambda^{0,2}[3]\hookrightarrow \Delta[3]$ for each non-degenerate 3-simplex of $I[2n+1]$ yields a triangulation $\mcT$ of the $(2n+2)$-gon, and therefore a weak equivalence $\mcT\hookrightarrow I[2n+1]$. Because the inclusion $\mcT\hookrightarrow \Delta[2n+1]$ is a weak equivalence, so is $I[2n+1]\hookrightarrow \Delta[2n+1]$ by the 2-out-of-3 property. We have therefore shown that the edgewise subdivision of a quasi-2-Segal set is necessarily a quasi-category.

We now show the reverse implication. As discussed above, the edgewise subdivision of $X$ being a quasi-category is equivalent to $X$ having fillers of the inclusions $A^i[2n+1]\hookrightarrow \Delta[2n+1]$. To show that this condition implies $X$ is a quasi-2-Segal set, we show that every 2-Segal horn inclusion of the form $\Lambda^{0,j}[k]\hookrightarrow \Delta[k]$ is a retract of $A^{j-1}[2k-1]\hookrightarrow \Delta[2k-1]$. This fact, together with its dual, implies that $X$ is a quasi-2-Segal set by Theorem \ref{thm:pathspacecriterion}.

Given $k\geq 3$ and $2\leq j\leq k$, let us show that $\Lambda^{0,j}[k]\hookrightarrow \Delta[k]$ is a retract of $A^{j-1}[2k-1]\hookrightarrow \Delta[2k-1]$. First, we define the retract
\[\begin{tikzcd}
	{\Delta[k]} & {\Delta[2k-1]} & {\Delta[k]\nospace{.}}
	\arrow["f"', hook, from=1-1, to=1-2]
	\arrow["p"', from=1-2, to=1-3]
\end{tikzcd}\]
where $f$ maps $0\mapsto k-j$ and $\ell\mapsto k-1+\ell$ for $\ell>0$, and where $p$ maps $\ell\mapsto 0$ for $\ell<k$ and $\ell\mapsto \ell-k+1$ for $\ell\geq k$. We then have
\[
f(d_{\ell}\Delta[k])\cin d_{k-\ell}d_{k+\ell-1}\cin A^{j-1}[2k-1]
\]
if $0<\ell <j$ or $j<\ell\leq k$, as well as
\[
p(d_{k-1+\ell}\Delta[2k-1])\cin d_{\ell} \Delta[k]
\]
for each $1<\ell\leq k$, which altogether says we have our desired retract diagram



\[\begin{tikzcd}
	{\Lambda^{0,j}[k]} & {A^{j-1}[2k-1]} & {\Lambda^{0,j}[k]} \\
	{\Delta[k]} & {\Delta[2k-1]} & {\Delta[k]\nospace{.}}
	\arrow["f"', hook, from=2-1, to=2-2]
	\arrow["p"', from=2-2, to=2-3]
	\arrow[hook, from=1-1, to=2-1]
	\arrow[hook, from=1-3, to=2-3]
	\arrow[hook, from=1-2, to=2-2]
	\arrow[hook, from=1-1, to=1-2]
	\arrow[from=1-2, to=1-3]
\end{tikzcd}\]
We have therefore shown that a simplicial set is a quasi-2-Segal set if its edgewise subdivision is a quasi-category.
\end{proof}

\begin{rmk}
We expect a stronger statement to hold, that the set of inclusions $\{A^i[2n+1]\hookrightarrow \Delta[2n+1]\}$ actually generates the class of 2-Segal anodyne inclusions, which would yield a direct proof of the edgewise subdivision criterion without appealing to a model structure. Our proof of Proposition \ref{prop:edgewise} shows one of the necessary containments of this stronger statement, but we do not show that each $A^i[2n+1]\hookrightarrow \Delta[2n+1]$ is 2-Segal anodyne, only that each is a weak equivalence in the quasi-2-Segal model structure.
\end{rmk}

\section{Quasi-2-Segal sets vs 2-Segal spaces}\label{sec:vs}

A vital aspect of $(\infty,1)$-category theory is the pair of adjunctions between simplicial sets and simplicial spaces providing Quillen equivalences between the Joyal model structure and Rezk's model structure for complete Segal spaces, first proved in \cite{JT}. (The idea of a Quillen equivalence is that it provides the appropriate notion of equivalence for model categories; see \cite[\S 1.3]{Hovey} for an explicit definition.) Since each model of $(\infty,1)$-category has its benefits in different situations, it can be extremely helpful to be able to move freely between them. Our goal in this section is to prove that the same adjunctions provide Quillen equivalences between our quasi-2-Segal set model structure and a model structure for complete 2-Segal spaces. This result follows with relatively little extra work from very general results of Ara \cite{Ara}, building on Cisinski's work in \cite{Cisinski:Asterisque}, which gives an explicit description of a model structure on $\ssSet$ that is Quillen equivalent to our quasi-2-Segal set model structure. What we show here is that the model structure from Ara's framework is precisely the same as the model structure one gets by naively generalizing the model structure for complete Segal spaces due to Rezk \cite{Rezk:CSS}.

Recall the adjunctions



\[\begin{tikzcd}
	\sSet && \ssSet && \sSet
	\arrow[""{name=0, anchor=center, inner sep=0}, "{p_1^{\ast}}", shift left=2, from=1-1, to=1-3]
	\arrow[""{name=1, anchor=center, inner sep=0}, "{i_1^{\ast}}", shift left=2, from=1-3, to=1-1]
	\arrow[""{name=2, anchor=center, inner sep=0}, "{t_!}", shift left=2, from=1-3, to=1-5]
	\arrow[""{name=3, anchor=center, inner sep=0}, "{t^!}", shift left=2, from=1-5, to=1-3]
	\arrow["\dashv"{anchor=center, rotate=-90}, draw=none, from=2, to=3]
	\arrow["\dashv"{anchor=center, rotate=-90}, draw=none, from=0, to=1]
\end{tikzcd}\]
from \cite{JT}, where $p_1^{\ast}$ is the functor sending a simplicial set $X$ its corresponding discrete simplicial space, and where the functors $t^!$ and $t_!$ are as described in the paragraph before \cite[Lem.~2.11]{JT}. Viewing a simplicial space $W$ as a grid of sets where the $n$th column is the simplicial set $W_n$, the functor $p_1^{\ast}$ gives a ``horizontal embedding'' of $\sSet$ into $\ssSet$.

One of the characterizations of completeness for Segal spaces is being local with respect to the inclusion $p_1^{\ast}(\{0\}\hookrightarrow J)$ \cite[Prop.~6.4]{Rezk:CSS}. We take this as our definition for completeness of 2-Segal spaces.

\begin{defn}\label{def:completeness}
We say that a vertical Reedy fibrant 2-Segal space $W$ is \emph{complete} if the map
\[
\Map(p_1^{\ast}(J),W)\to \Map(p_1^{\ast}\Delta[0],W) \cong W_0
\]
induced by $p_1^{\ast}(\{0\}\hookrightarrow J)$ is a Kan-Quillen weak equivalence.
\end{defn}

We therefore get a Cisinski model structure on $\ssSet$ whose fibrant objects are complete 2-Segal spaces by localizing the 2-Segal model structure from \cite{DK} at the map $p_1^{\ast}(\{0\}\hookrightarrow J)$. We can now state the main theorem of this section.

\begin{thm}\label{thm:quillenequiv}
The adjunctions $p_1^{\ast}\dashv i_1^{\ast}$ and $t_!\dashv t^!$ are Quillen equivalences between the model structure for quasi-2-Segal sets and the model structure for complete 2-Segal spaces.
\end{thm}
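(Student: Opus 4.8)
The plan is to reduce the theorem to an application of Ara's general machinery \cite{Ara} together with an identification of fibrant objects, so that the genuinely new content is isolated in a single comparison step. First I would check that the quasi-2-Segal set model structure satisfies the hypotheses needed to feed it into Ara's framework. By Theorem \ref{thm:modelstructure} it is a Cisinski model structure on $\sSet$ realized as the localization of the minimal model structure at the set of 2-Segal spine inclusions, and by Corollary \ref{cor:2seganodyneclosedunderpushoutprod} its class of trivial cofibrations is closed under pushout-product with monomorphisms. This closure is exactly the compatibility with the cartesian structure that Ara's construction requires, so the framework applies.

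Second, invoking Ara's result (building on \cite{Cisinski:Asterisque}), the adjunctions $p_1^{\ast}\dashv i_1^{\ast}$ and $t_!\dashv t^!$ are Quillen equivalences between the quasi-2-Segal model structure and a certain model structure $\mcN$ on $\ssSet$, for which Ara supplies an explicit description. From that description I would extract that $\mcN$ is the localization of the vertical Reedy model structure at the horizontally embedded generators of the $\sSet$-side localization, namely the maps $p_1^{\ast}(\mcT\hookrightarrow\Delta[n])$ for 2-Segal spine inclusions $\mcT\hookrightarrow\Delta[n]$, together with the completeness map $p_1^{\ast}(\{0\}\hookrightarrow J)$. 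At this stage the hard work of proving the Quillen equivalence has already been absorbed into Ara's theorem.

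Third, and this is the crux, I would identify $\mcN$ with the complete 2-Segal space model structure. Both are localizations of the vertical Reedy model structure, hence they share its class of cofibrations; by the fact that a model structure is determined by its cofibrations together with its fibrant objects \cite[Prop.~E.1.10]{Joyal:theory}, it suffices to match fibrant objects. The fibrant objects of $\mcN$ are the vertical Reedy fibrant simplicial spaces $W$ that are local with respect to the embedded 2-Segal spine inclusions and with respect to $p_1^{\ast}(\{0\}\hookrightarrow J)$. The first family of locality conditions is precisely the 2-Segal condition of \cite{DK}, while the second is precisely completeness in the sense of Definition \ref{def:completeness}. Hence the fibrant objects of $\mcN$ are exactly the complete 2-Segal spaces, the two model structures coincide, and the theorem follows from the Quillen equivalence established by Ara.

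The main obstacle I expect is this third step. Ara's intrinsic description of the localizing data is phrased in the language of his general framework, and one must verify that, after unwinding, the maps at which his model structure localizes generate the \emph{same} saturated class of trivial cofibrations as $p_1^{\ast}$ of the 2-Segal spine inclusions together with $p_1^{\ast}(\{0\}\hookrightarrow J)$ -- an equality of localizers, not merely a containment in one direction. The earlier results of the paper help here: the characterization of 2-Segal anodyne maps and the stability of the class under pushout-product ensure that the horizontally embedded generators behave well under the operations used to compare the two localizations. Once this matching of localizing data is secured, the remaining verifications are formal.
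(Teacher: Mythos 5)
Your overall architecture matches the paper's: feed the quasi-2-Segal model structure into Ara's machinery, obtain a Quillen-equivalent model structure on $\ssSet$, and then identify that model structure with the complete 2-Segal space model structure by comparing localizers. The gap is in your second and third steps, and it is exactly the point where the paper has to do real work. Ara's explicit description of the localizing data (as recorded in Proposition~\ref{prop:ara}) is not $p_1^{\ast}(S)\cup\{p_1^{\ast}(\{0\}\hookrightarrow J)\}$ but rather
\[
p_1^{\ast}(S)\cup \{\,p_1^{\ast}\bigl((\{0\}\times Y)\hookrightarrow (J\times Y)\bigr)\mid Y\in\sSet\,\},
\]
i.e.\ it includes the completeness map smashed against \emph{every} simplicial set $Y$, not just $Y=\Delta[0]$. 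The complete 2-Segal space model structure of Definition~\ref{def:completeness} localizes only at $p_1^{\ast}(\{0\}\hookrightarrow J)$, so to identify the two model structures one must prove that each $p_1^{\ast}((\{0\}\times Y)\hookrightarrow(J\times Y))$ is already a weak equivalence there. This is not formal: a left Bousfield localization at a single map need not invert the products of that map with arbitrary objects.

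Your proposed remedy --- invoking the characterization of 2-Segal anodyne maps and Corollary~\ref{cor:2seganodyneclosedunderpushoutprod} --- does not close this gap. That corollary gives $\ol{\TwoSegHorn}\boxprod\Mono\cin\ol{\TwoSegHorn}$, whereas what is needed is $\Mono\boxprod\{\{0\}\hookrightarrow J\}\cin\ol{\TwoSegHorn\cup\{\{0\}\hookrightarrow J\}}$; since $\{0\}\hookrightarrow J$ is not 2-Segal anodyne, the corollary says nothing about it. The paper's proof supplies the missing argument in two steps: Proposition~\ref{prop:pushoutsinisohornsaturation} (imported from the iso-horn results of the minimal-model-structure paper) shows each $(\{0\}\times Y)\hookrightarrow(J\times Y)$ lies in $\ol{\IsoHorn}$, and the genuinely new Lemma~\ref{lem:isohornpushoutsof2seg} shows each iso-horn inclusion $\bV_i[n]\hookrightarrow\tri_i[n]$ with $n\geq 2$ is a countable composite of pushouts of 2-Segal horn inclusions (via the filtration by the simplices $B_\ell$ winding back and forth across the invertible edge), with the case $n=1$ being $\{0\}\hookrightarrow J$ itself. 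Without an argument of this kind your ``equality of localizers'' remains an assertion, and it is precisely the crux rather than a formality.
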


\begin{warn}\label{warn:completeness}
Other notions of completeness for 2-Segal spaces have been defined in the literature---Dyckerhoff-Kapranov offer a definition in \cite[Rmk.~9.3.13]{DK}, and G\'alvez-Kock-Tonks also have a different definition in \cite[Def.~2.1]{GKT:partII}---but neither of these is a generalization of completeness for Segal spaces in the spirit of our definition.

However, G\'alvez-Kock-Tonks also refer to a condition they call Rezk completeness \cite[5.13]{GKT:partII}, which is in the same spirit as our definition of completeness, but is a direct generalization of the definition in \cite[\S 6]{Rezk:CSS}. This definition of Rezk completeness for 2-Segal spaces is also used in \cite{HK:culf} and \cite{HK:free}. (Recall that these sources refer to 2-Segal spaces as decomposition spaces.) For a general 2-Segal space their definition of Rezk completeness is stronger than Definition \ref{def:completeness}, as we discuss below in Subsection \ref{sub:equivalences}.
\end{warn}

The goal of the remainder of this section is to prove Theorem \ref{thm:quillenequiv}. We begin by explaining the model structure on simplicial spaces we get from Ara's theory, and then showing this model structure is precisely the complete 2-Segal space model structure.

Let us state Ara's theorem in our particular setting.

\begin{prop}\label{prop:ara}
The adjunctions $p_1^{\ast}\dashv i_1^{\ast}$ and $t_!\dashv t^!$ are Quillen equivalences between the model structure for quasi-2-Segal sets and the localization of the vertical Reedy model structure on $\ssSet$ at the class of maps
\[
p_1^{\ast}(S)\cup \{p_1^{\ast}((\{0\}\times Y)\hookrightarrow (J \times Y))\mid Y\in \sSet\},
\]
where $S$ is the set of 2-Segal spine inclusions, or, equivalently, where $S=\TwoSegHorn$.
\end{prop}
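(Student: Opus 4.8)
The plan is to obtain this proposition as a direct specialization of Ara's comparison theorem \cite{Ara} (resting on Cisinski's machinery \cite{Cisinski:Asterisque}), so that essentially all of the work lies in checking that the quasi-2-Segal set model structure is an admissible input and then reading off the resulting localization on $\ssSet$. First I would recall the relevant form of Ara's theorem: given a Cisinski model structure on $\sSet$ presented as the localization of the minimal model structure at a set $S$ of monomorphisms, the Joyal--Tierney adjunctions $p_1^{\ast}\dashv i_1^{\ast}$ and $t_!\dashv t^!$ are Quillen equivalences between it and the localization of the vertical Reedy model structure on $\ssSet$ at the horizontal image $p_1^{\ast}(S)$ together with the completeness family $\{p_1^{\ast}((\{0\}\times Y)\hookrightarrow (J\times Y))\}_{Y\in\sSet}$. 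The completeness family is built into the construction independently of $S$; it is precisely what forces the $t_!\dashv t^!$ adjunction to become an equivalence, just as completeness is what pins down the equivalence between quasi-categories and complete Segal spaces in \cite{JT}.

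The hypotheses to verify are supplied entirely by our earlier results. By Theorem \ref{thm:modelstructure}, the quasi-2-Segal set model structure is a Cisinski model structure and is exactly the localization of the minimal model structure at the set of 2-Segal spine inclusions, so we may take $S$ to be that set. By Proposition \ref{prop:spinesandhornsweakequivalences}, localizing at the 2-Segal spine inclusions, at $\TwoSegHorn$, or at the generalized 2-Segal horn inclusions all yield the same class of weak equivalences, which justifies the clause permitting $S=\TwoSegHorn$. Since these are sets of monomorphisms between simplicial sets, they satisfy Ara's standing requirements, and the existence of the localized model structure on $\ssSet$ poses no difficulty.

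Applying Ara's theorem with this choice of $S$ then produces the stated Quillen equivalences and identifies the localizing set on $\ssSet$ as $p_1^{\ast}(S)\cup\{p_1^{\ast}((\{0\}\times Y)\hookrightarrow (J\times Y))\mid Y\in\sSet\}$, completing the proof. The step I expect to demand the most care is matching conventions: Ara's framework is stated at a high level of generality and with its own orientation choices, so I would need to confirm that the functors named $p_1^{\ast}\dashv i_1^{\ast}$ and $t_!\dashv t^!$ here agree with those in Ara's setup under our \emph{vertical} convention for simplicial spaces, and that the completeness maps appear in exactly the displayed form rather than, say, as pushout-products of $\{0\}\hookrightarrow J$ with boundary inclusions. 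This is bookkeeping rather than new mathematics, which is why the result follows with relatively little extra work once the translation is fixed.
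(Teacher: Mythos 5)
Your proposal is correct and follows essentially the same route as the paper: both invoke Ara's comparison theorem (Thm.~4.11 of \cite{Ara}) applied to the quasi-2-Segal set model structure, using the fact from Theorem~\ref{thm:modelstructure} and Proposition~\ref{prop:spinesandhornsweakequivalences} that this model structure is the localization of the minimal model structure at either the 2-Segal spine inclusions or at $\TwoSegHorn$, with the localizing maps on $\ssSet$ read off from \cite[4.1]{Ara}. Your added attention to verifying the admissibility hypotheses and matching conventions is sensible bookkeeping but does not change the argument.
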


\begin{proof}
Apply \cite[Thm.~4.11]{Ara} to the case where $A=\Delta$ and where the model structure is our model structure for quasi-2-Segal sets. The description of the localizing maps is \cite[4.1]{Ara}. Either choice of the set $S$ works because the model structure for quasi-2-Segal sets is the localization of the minimal model structure either at $\TwoSegHorn$ or at the set of 2-Segal spine inclusions by Proposition \ref{prop:spinesandhornsweakequivalences}.
\end{proof}

In other words, the model structure from Ara's theory is the localization of the model structure for complete 2-Segal spaces at the maps
\[
\{p_1^{\ast}((\{0\}\times Y)\hookrightarrow (J \times Y))\mid Y\in \sSet\}.
\]
Our goal is therefore to show that these model structures are the same by showing that these maps must necessarily be weak equivalences in the model structure for complete 2-Segal spaces. Because each of the maps in
\[
p_1^{\ast}(\TwoSegHorn)\cup \{p_1^{\ast}(\{0\}\hookrightarrow J)\}
\]
is a monomorphism and hence a trivial cofibration, it suffices to show that each $p_1^{\ast}((\{0\}\times Y)\hookrightarrow (J \times Y))$ is in the saturated class generated by $p_1^{\ast}(\TwoSegHorn)\cup \{p_1^{\ast}(\{0\}\hookrightarrow J)\}$. However, because $p_1^{\ast}$ is a left adjoint and so preserves colimits, it suffices to show that each $(\{0\}\times Y)\hookrightarrow (J \times Y)$ is in the saturated class generated by $\TwoSegHorn\cup\{\{0\}\hookrightarrow J)\}$ in $\sSet$. But each $(\{0\}\times Y)\hookrightarrow (J \times Y)$ is the pushout-product $(\varnothing\hookrightarrow Y)\boxprod (\{0\}\hookrightarrow J)$, and we showed in \cite{Feller:minimal} that these pushout-products are in the saturated class generated by a set of certain inclusions we call iso-horn inclusions. It therefore suffices to show that these iso-horn inclusions are each in $\ol{\TwoSegHorn\cup \{\{0\}\hookrightarrow J\}}$. Let us give a definition of iso-horn inclusions.

\begin{defn}\label{def:isohorns}
Fix $n\geq 1$ and $0\leq i\leq n-1$, denote by $\tri_i[n]$ the nerve of the category $[n]_i$



\[\begin{tikzcd}[column sep=small]
	0 & \ldots & {i-1} & i & {i+1} & {i+2} & \ldots & n\nospace{.}
	\arrow[from=1-1, to=1-2]
	\arrow[from=1-2, to=1-3]
	\arrow[from=1-3, to=1-4]
	\arrow[shift right=1, from=1-4, to=1-5]
	\arrow[shift right=1, from=1-5, to=1-4]
	\arrow[from=1-5, to=1-6]
	\arrow[from=1-6, to=1-7]
	\arrow[from=1-7, to=1-8]
\end{tikzcd}\]
We call $\tri_i[n]$ an \emph{isoplex}. For $0\leq j\leq n$, we let the $j$th face of the isoplex $\tri_i[n]$, denoted $d_j \tri_i[n]$, be the full subcomplex on all but the $j$th vertex. Let $\bV_i[n]$ be the union of all of the faces $d_j \tri_i[n]$ except for $j=i$. We call $\bV_i[n]$ an \emph{iso-horn}, and the inclusion $\bV_i[n]\hookrightarrow \tri_i[n]$ an \emph{iso-horn inclusion}. Denote by $\IsoHorn$ the set of all iso-horn inclusions.
\end{defn}

Observe that when $j=i$ or $i+1$, the $j$th face of $\tri_i[n]$ is an $(n-1)$-simplex $\Delta[n-1]$, and otherwise it is an $(n-1)$-isoplex. We can view an isoplex as an ``isomorphism  of $(n-1)$-simplices'' from the $i+1$ face to the $i$th face. Similarly, we can view an iso-horn $\bV_i[n]$ as an $(n-1)$-simplex extended by an isomorphism along its boundary.

We get the following proposition as a consequence of our results in \cite{Feller:minimal}.

\begin{prop}\label{prop:pushoutsinisohornsaturation}
For every simplicial set $Y$, the inclusion $(\{0\}\times Y)\hookrightarrow (J \times Y)$ is in $\ol{\IsoHorn}$.
\end{prop}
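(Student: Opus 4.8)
The plan is to rewrite the inclusion as a pushout-product and then reduce to a single generating family, whereupon the statement becomes the main combinatorial result of \cite{Feller:minimal}. First I would note that $(\{0\}\times Y)\hookrightarrow(J\times Y)$ is literally the pushout-product $(\varnothing\hookrightarrow Y)\boxprod(\{0\}\hookrightarrow J)$: with the convention of the excerpt, the domain of that pushout-product is $(J\times\varnothing)\cup(\{0\}\times Y)=\{0\}\times Y$ and its codomain is $J\times Y$, matching the map in the statement.

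Next I would reduce to the boundary inclusions. Since $\varnothing\hookrightarrow Y$ is a monomorphism it lies in $\Mono=\ol{\Bdry}$, so applying the containment $\ol{\mcS}\boxprod\ol{\mcT}\cin\ol{\mcS\boxprod\mcT}$ (recorded earlier in the excerpt) with $\mcS=\Bdry$ and $\mcT=\{\{0\}\hookrightarrow J\}$ gives
\[
(\varnothing\hookrightarrow Y)\boxprod(\{0\}\hookrightarrow J)\in\ol{\Bdry\boxprod\{\{0\}\hookrightarrow J\}}.
\]
Hence it is enough to treat each single pushout-product $(\partial\Delta[m]\hookrightarrow\Delta[m])\boxprod(\{0\}\hookrightarrow J)$, whose target is $J\times\Delta[m]$ and whose source is $(\{0\}\times\Delta[m])\cup(J\times\partial\Delta[m])$, and to show it lies in $\ol{\IsoHorn}$.

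This last containment is exactly what is established in \cite{Feller:minimal}. The argument there filters $J\times\Delta[m]$ by attaching its nondegenerate simplices over the source one at a time; because $J$ has precisely two nondegenerate simplices in each positive dimension (the two alternating strings of $0$'s and $1$'s), these cells come in a controlled pattern of shuffles, and one orders the attachments so that each is realized as a pushout of an iso-horn inclusion $\bV_i[n]\hookrightarrow\tri_i[n]$ rather than of an ordinary horn inclusion. As $\ol{\IsoHorn}$ is saturated and the reduction above is built purely from pushouts and transfinite compositions, the proposition then follows for every $Y$. The main obstacle is precisely this combinatorial bookkeeping—producing an ordering of the nondegenerate simplices of $J\times\Delta[m]$ under which every cell is attached along an iso-horn and not merely an ordinary horn—which is the content of the cited results, so that nothing beyond invoking \cite{Feller:minimal} is required here.
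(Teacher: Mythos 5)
Your proposal is correct and follows essentially the same route as the paper: both identify the map as the pushout-product $(\varnothing\hookrightarrow Y)\boxprod(\{0\}\hookrightarrow J)$ and then invoke the result of \cite{Feller:minimal} that such pushout-products lie in (indeed generate) the saturated class $\ol{\IsoHorn}$. Your extra reduction to boundary inclusions via $\ol{\mcS}\boxprod\ol{\mcT}\cin\ol{\mcS\boxprod\mcT}$ is a harmless refinement; the paper simply cites the statement for arbitrary monomorphisms $X\hookrightarrow Y$ directly.
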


\begin{proof}
Since the inclusion $(\{0\}\times Y)\hookrightarrow (J \times Y)$ is the pushout-product $(\varnothing\hookrightarrow Y)\boxprod (\{0\}\hookrightarrow J)$, we may use the fact that the class of pushout-products $(X\hookrightarrow Y)\boxprod (\{0\}\hookrightarrow J)$ for all inclusions $X\hookrightarrow Y$ generates the same saturated class as the set of iso-horn inclusions, as we showed in \cite{Feller:minimal}.
\end{proof}

By this proposition, to show that each inclusion $(\{0\}\times Y)\hookrightarrow (J \times Y)$ is in the saturated class generated by $\TwoSegHorn\cup \{\{0\}\hookrightarrow J\}$, it is enough to prove that the set $\IsoHorn$ is contained in $\ol{\TwoSegHorn\cup \{\{0\}\hookrightarrow J\}}$. Furthermore, because $\{0\}\hookrightarrow J$ is precisely the iso-horn $\bV_0[1]\hookrightarrow \tri_0[1]$, it suffices to prove the following lemma.

\begin{lem}\label{lem:isohornpushoutsof2seg}
For each $n\geq 2$ and $0\leq i<n$, the iso-horn inclusion $\bV_i[n]\hookrightarrow \tri_i[n]$ is in $\ol{\TwoSegHorn}$.
\end{lem}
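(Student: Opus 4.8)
The plan is to recognize the isoplex as an iterated join, identify the iso-horn inclusion as an iterated pushout-join, reduce the general case to a one-sided base case, and then attack that base case by hand. I would begin with the structural observation that the localized category $[n]_i$ is itself a join, $[n]_i\cong[i-1]\star\mathbb{I}\star[n-i-2]$ (the two outer blocks being linear orders and the middle being the free-living isomorphism), so that applying the nerve gives $\tri_i[n]\cong\Delta[i-1]\star J\star\Delta[n-i-2]$, with $\Delta[i-1]$ on $\{0,\dots,i-1\}$, $J$ on $\{i,i+1\}$, and $\Delta[n-i-2]$ on $\{i+2,\dots,n\}$ (under the conventions $\Delta[-1]=\varnothing$, $X\star\varnothing=X$). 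Writing $A=\Delta[i-1]$ and $C=\Delta[n-i-2]$ and tracking which faces survive, the face $d_{i+1}$ restricts $J$ to its vertex $0$ while $d_i$ restricts it to its vertex $1$; hence the iso-horn inclusion is exactly the iterated pushout-join
\[
\bV_i[n]\hookrightarrow\tri_i[n]\ \cong\ (\partial A\hookrightarrow A)\star(\{0\}\hookrightarrow J)\star(\partial C\hookrightarrow C),
\]
whose domain $(\partial A\star J\star C)\cup(A\star\{0\}\star C)\cup(A\star J\star\partial C)$ is precisely the union of all faces of $\tri_i[n]$ except $d_i$.

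The middle factor $\{0\}\hookrightarrow J$ is $\bV_0[1]\hookrightarrow\tri_0[1]$ and is \emph{not} 2-Segal anodyne, so I cannot invoke Lemma~\ref{lem:pushoutjoin}; instead I would peel off the two outer boundary inclusions. The key computation is that the pushout-join of a 2-Segal horn with a simplex boundary is again a 2-Segal horn: using $d_\ell\Delta[p]\star\Delta[d]=d_\ell\Delta[p+d+1]$ and the mirror identity, one checks $(\Lambda^{a,b}[d]\hookrightarrow\Delta[d])\star(\partial\Delta[p]\hookrightarrow\Delta[p])\cong\Lambda^{a,b}[d+p+1]\hookrightarrow\Delta[d+p+1]$ and symmetrically $\Lambda^{p+1+a,\,p+1+b}[d+p+1]$, since the two missing faces stay non-adjacent after the index shift. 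Thus $\TwoSegHorn\star\Bdry\cin\TwoSegHorn$ and $\Bdry\star\TwoSegHorn\cin\TwoSegHorn$, and because pushout-join with a fixed map preserves saturated classes (the join analogue of $\ol{\mcS}\boxprod\ol{\mcT}\cin\ol{\mcS\boxprod\mcT}$), this upgrades to $\Mono\star\ol{\TwoSegHorn}\cin\ol{\TwoSegHorn}$ and $\ol{\TwoSegHorn}\star\Mono\cin\ol{\TwoSegHorn}$. For $0<i<n-1$ I would then write the inclusion as $(\alpha\star\beta)\star\gamma$, observe that $\alpha\star\beta=\bV_i[i+1]\hookrightarrow\tri_i[i+1]$ is a ``top-iso'' iso-horn of smaller dimension, and conclude from the closure property above. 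By the $\op$-duality of $\Delta$ (which swaps $\tri_i[n]$ with $\tri_{n-1-i}[n]$ and carries 2-Segal horns to 2-Segal horns), it then suffices to handle the single family of ``bottom-iso'' base cases $\bV_0[m]\hookrightarrow\tri_0[m]=J\star\Delta[m-2]$, $m\ge2$.

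The base case is the crux. Here I would filter $\tri_0[m]$ over $\bV_0[m]$ by gluing in the non-degenerate simplices $\rho\star\Delta[m-2]$, where $\rho$ ranges over the bouncing simplices of $J$ meeting the vertex $1$. The forward simplex and the ``$\sigma$-bounces'' $\sigma_k\star\Delta[m-2]$ attach by pushouts of 2-Segal horns $\Lambda^{0,k}[m+k-1]$: the join with $\Delta[m-2]$ supplies a \emph{second}, non-adjacent missing face (upgrading what in $J$ alone would be only an outer horn to a genuine 2-Segal horn), and the interior faces of each bounce are degeneracies of previously attached cells, exactly as in the construction of Lemma~\ref{lem:invertinghomotopy}. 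The main obstacle is the opposite parity: each ``$\tau$-bounce'' $\tau_k\star\Delta[m-2]$ shares both of its relevant facets with the $\sigma$-bounces, so once the $\sigma$'s are present it becomes the \emph{unique} missing facet inside each of its cofaces, whose only available horns are outer or inner rather than 2-Segal. This is precisely where the invertibility of the edge $i\to i+1$ must enter: the cofaces containing a stuck $\tau$-bounce carry an invertible (bouncing) edge, so the relevant filling is governed by the special/$J$-augmented horn lifting of Proposition~\ref{prop:augmentedhornlifting} rather than by a bare 2-Segal horn. I would therefore realize each such step not as a single pushout but as a \emph{retract} onto a composite of 2-Segal horn inclusions—in the spirit of the retract diagrams used for the path space and edgewise subdivision criteria—with the retraction built from the same degenerate-face data that witnesses invertibility. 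Setting up this matching of the $\sigma$- and $\tau$-bounces so that every attaching map is genuinely 2-Segal (never merely inner or outer) is where essentially all of the work resides.
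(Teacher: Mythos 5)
Your reduction to a one-sided base case is sound and genuinely different in organization from the paper's argument: the identification $\tri_i[n]\cong\Delta[i-1]\star J\star\Delta[n-i-2]$, the recognition of $\bV_i[n]\hookrightarrow\tri_i[n]$ as the iterated pushout-join of $\partial\Delta[i-1]\hookrightarrow\Delta[i-1]$, $\{0\}\hookrightarrow J$, and $\partial\Delta[n-i-2]\hookrightarrow\Delta[n-i-2]$, and the computation $\TwoSegHorn\star\Bdry\cin\TwoSegHorn$ (hence $\ol{\TwoSegHorn}\star\Mono\cin\ol{\TwoSegHorn}$) all check out. (One small inaccuracy: $\op$-duality sends $\bV_i[n]$ to the union of all faces of $\tri_{n-1-i}[n]$ \emph{except} $d_{n-i}$, not except $d_{n-1-i}$, so you also need the swap automorphism of $J$ to land on $\bV_{n-1-i}[n]$; this is harmless but should be said.) The genuine gap is exactly where you locate it: the base case $\bV_0[m]\hookrightarrow J\star\Delta[m-2]$ is not proved. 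Your cell-by-cell filtration by bouncing simplices does get stuck on the opposite-parity bounces, and the proposed rescue via Proposition~\ref{prop:augmentedhornlifting} is not viable as stated: that proposition only gives quasi-2-Segal sets fillers of $J$-augmented horns, which at best shows the iso-horn inclusion has the left lifting property against quasi-2-Segal sets. That is strictly weaker than membership in $\ol{\TwoSegHorn}$, which is what the lemma asserts and what Theorem~\ref{thm:quillenequiv} needs (the maps must become trivial cofibrations after applying $p_1^{\ast}$, and lifting against fibrant objects does not give that). The retract construction you gesture at would suffice if carried out, but it is precisely the content of the lemma and is left undone.

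The missing idea is to use a coarser filtration in which the parity problem never appears. The paper attaches, for each $\ell\geq 1$, the single maximal $(n+2\ell-1)$-simplex $B_\ell$ tracing $0\to\cdots\to i\to(i+1)\to i\to\cdots\to(i+1)\to i\to(i+2)\to\cdots\to n$, whose bounce block starts and ends at $i$ and visits $i+1$ exactly $\ell$ times. Every bouncing simplex of $\tri_i[n]$ --- in particular each of your $\tau$-bounces starting at $i+1$ --- is a face of some $B_\ell$ (delete the initial $i$ of the bounce block), so nothing has to be attached separately. The intersection of $B_\ell$ with $\bV_i[n]\cup B_{\ell-1}$ is exactly $\Lambda^{i,i+2\ell}[n+2\ell-1]$: the faces $d_j$ for $j<i$ or $j>i+2\ell$ lie in $\bV_i[n]$, the faces $d_j$ for $i<j<i+2\ell$ are degeneracies of faces of $B_{\ell-1}$, and only $d_i$ and $d_{i+2\ell}$ are new; since $2\ell\geq 2$ and not both $i=0$ and $i=n-1$, this horn is always 2-Segal. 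Each stage is thus a single pushout of a 2-Segal horn inclusion, with no retracts and no appeal to invertibility.
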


\begin{proof}
Let $B_{\ell}$ be the unique $(n+2\ell-1)$-simplex of $\tri_i[n]$ traced by the path
\[
0\to 1\to \ldots\to i\to (i+1) \to i \to (i+1) \to \ldots \to i \to (i+1)\to i \to (i+2) \to \ldots \to n,
\]
which passes through the $i+1$ vertex $\ell$ times. The $(n-1)$-simplex $B_0$ is simply $d_{i+1}\tri_i[n]$ which is in $\bV_i[n]$. Every $k$-simplex of $\tri_i[n]$ is contained in $B_n$ for some $n\geq 0$, so we can decompose our iso-horn inclusion as
\[
\bV_i[n] = \bV_i[n]\cup B_0 \hookrightarrow \bV_i[n]\cup B_1 \hookrightarrow \ldots \hookrightarrow \bV_i[n]\cup B_i \hookrightarrow \ldots \tri_i[n].
\]
But each inclusion $\bV_i[n]\cup B_{\ell-1} \hookrightarrow \bV_i[n]\cup B_{\ell}$ is a pushout of a 2-Segal horn inclusion $\Lambda^{i,i+2\ell}[n+2\ell-1]\hookrightarrow \Delta[n+2\ell-1]$. To see why, observe that each face $d_j B_{\ell}$ for $j<i$ or $j>i+2\ell$ is contained in $\bV_i[n+2\ell-1]$, while for $i<j<i+2\ell$ the face $d_j B_{\ell}$ is a degeneracy of a simplex contained in $B_{\ell-1}$. Meanwhile, the face $d_i d_{i+2\ell} B_{\ell}$ is not contained in $B_{\ell-1}$ because it is non-degenerate yet passes through the $i+1$ vertex $\ell$ times, and it is not contained in $\bV_i[n]$ because it contains the $d_i$ face of $\tri_i[n]$; the faces $d_i B_{\ell}$ and $d_{i+2\ell} B_{\ell}$ are therefore not in $\bV_i[n]\cup B_{\ell}$ either. We have shown that our iso-horn inclusion is a countable composite of pushouts of 2-Segal horn inclusions, proving the lemma.
\end{proof}

We have now proved Theorem \ref{thm:quillenequiv}. Let us summarize the argument.

\begin{proof}[Proof of Theorem \ref{thm:quillenequiv}]
By Proposition \ref{prop:pushoutsinisohornsaturation}, the inclusions $(\{0\}\times Y)\hookrightarrow (J\times Y)$ are in $\ol{\IsoHorn}$, and by Lemma \ref{lem:isohornpushoutsof2seg} we have $\ol{\IsoHorn}\cin \ol{\TwoSegHorn\cup \{\{0\}\hookrightarrow J\}}$. Because the left adjoint $p_1^{\ast}$ preserves colimits, we therefore have that each inclusion
\[
p_1^{\ast}\left((\{0\}\times Y)\hookrightarrow (J\times Y)\right)
\]
is a trivial cofibration, and hence a weak equivalence in the model structure for complete 2-Segal spaces because it is contained in the saturated class generated by the set of inclusions $p_1^{\ast}(\TwoSegHorn)\cup \{p_1^{\ast}(\{0\}\hookrightarrow J)\}$ which are all trivial cofibrations. The model structure for complete 2-Segal spaces is therefore precisely the model structure from Proposition \ref{prop:ara}.
\end{proof}

We conclude by showing that the path space criterion also applies to complete 2-Segal spaces.

\begin{prop}\label{prop:pathspacecomplete}
Given a vertical Reedy fibrant 2-Segal space $W$, the following are equivalent.
\begin{enumerate}
    \item The 2-Segal space $W$ is complete.
    \item One of the path spaces of $W$ is complete.
    \item Both of the path spaces of $W$ are complete.
\end{enumerate}
\end{prop}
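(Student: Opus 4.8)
The plan is to first reduce to a single clean comparison and then attack its two directions separately. Since $W$ is $2$-Segal, the path space criterion for $2$-Segal spaces \cite[Thm.~6.3.2]{DK} guarantees that $P^{\triangleleft}(W)$ and $P^{\triangleright}(W)$ are Segal spaces, so it makes sense to ask whether they are complete. The implication $(3)\Rightarrow(2)$ is immediate, so it suffices to prove that $W$ is complete if and only if $P^{\triangleleft}(W)$ is complete, together with the symmetric statement for $P^{\triangleright}(W)$: the forward implications then give $(1)\Rightarrow(3)$ and the backward implications give $(2)\Rightarrow(1)$.

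The first step is to rewrite completeness of $P^{\triangleleft}(W)$ as a locality condition on $W$ itself. Writing $P^{\triangleleft}=\delta^{\ast}$ for the restriction along $\delta\colon[n]\mapsto[0]\star[n]$, this functor commutes with $p_1^{\ast}$ and with the vertical tensoring $(-)\times p_2^{\ast}(\Delta[m])$, so its left adjoint $\delta_!$ yields a natural isomorphism $\Map(p_1^{\ast}(K),P^{\triangleleft}(W))\cong\Map(p_1^{\ast}(\delta_! K),W)$. Now $\delta_!$ agrees on representables with the cone $\Delta[0]\star(-)$, and although the cone fails to preserve all colimits, both functors preserve connected colimits; since $J$ is connected this gives $\delta_! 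J\cong\Delta[0]\star J$ and $\delta_!(\{0\}\hookrightarrow J)\cong\Delta[0]\star(\{0\}\hookrightarrow J)$. Writing $\alpha$ for $\{0\}\hookrightarrow J$ and $c\alpha$ for its cone $\Delta[1]\hookrightarrow\Delta[0]\star J$, and noting $\delta_!(\{0\})=\Delta[1]$ so that $\Map(p_1^{\ast}(\Delta[1]),W)=W_1=(P^{\triangleleft}(W))_0$, we conclude: $P^{\triangleleft}(W)$ is complete if and only if $W$ is local with respect to $p_1^{\ast}(c\alpha)$, exactly as $W$ being complete means $W$ is local with respect to $p_1^{\ast}(\alpha)$.

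The backward implication is then formal. The inclusions of the bases of the cones assemble into an arrow map $\alpha\to c\alpha$, and because the cone point is initial there is a retraction $c\alpha\to\alpha$ (collapsing $\Delta[1]$ and sending the cone vertex to $0\in J$), exhibiting $\alpha$ as a retract of $c\alpha$ in the arrow category. Applying $p_1^{\ast}$ and then $\Map(-,W)$, the completeness map of $W$ becomes a retract of the completeness map of $P^{\triangleleft}(W)$, and a retract of a weak equivalence is a weak equivalence; note this direction uses neither completeness nor $2$-Segality of $W$. For the forward implication I would show that $c\alpha$ is a weak equivalence in the quasi-$2$-Segal set model structure: it admits the strict retraction above together with a $J$-homotopy $c\alpha\circ q\simeq\id$ coming from the natural isomorphism of the categorical equivalence $[1]\simeq[0]\star\mathbb{I}$, so it is a $J$-homotopy equivalence between (bi)fibrant objects and hence a weak equivalence. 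Since $p_1^{\ast}$ is the left adjoint of a Quillen equivalence (Theorem~\ref{thm:quillenequiv}), $p_1^{\ast}(c\alpha)$ is then a weak equivalence in the complete $2$-Segal space model structure, so a complete (hence fibrant) $W$ is local with respect to it, which is exactly completeness of $P^{\triangleleft}(W)$. The argument for $P^{\triangleright}(W)$ is identical using the join on the other side.

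I expect the forward implication to be the main obstacle. The backward direction and the adjunction bookkeeping are essentially formal, whereas showing $c\alpha$ is a weak equivalence requires knowing that $J$ behaves as an interval in the quasi-$2$-Segal model structure—so that a $J$-homotopy equivalence between fibrant objects is a weak equivalence—and then transporting this across the Quillen equivalence and using that completeness of $W$ is precisely fibrancy in the localized model structure.
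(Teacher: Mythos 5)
Your proof is correct and its skeleton matches the paper's exactly: reduce by symmetry to the left path space, identify completeness of $P^{\triangleleft}(W)$ with $W$ being local with respect to $p_1^{\ast}$ of the coned map $\Delta[1]\hookrightarrow \Delta[0]\star J\cong \tri_1[2]$ (the paper does this by inspection rather than via $\delta_!$ and connected colimits, but it is the same identification), and get the backward implication from the same retract of arrows. The only genuine divergence is in the forward direction. The paper shows directly that $d_2\colon \Delta[1]\hookrightarrow \tri_1[2]$ is a composite of pushouts of $\{0\}\hookrightarrow J$ and an iso-horn inclusion $\bV_1[2]\hookrightarrow\tri_1[2]$, so by Lemma~\ref{lem:isohornpushoutsof2seg} it lies in the saturated class generated by $p_1^{\ast}(\TwoSegHorn)\cup\{p_1^{\ast}(\{0\}\hookrightarrow J)\}$ after applying $p_1^{\ast}$, and is therefore a trivial cofibration in the complete 2-Segal space model structure with no further input. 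You instead argue that the coned map is a $J$-homotopy equivalence between fibrant objects (both being nerves of categories, hence quasi-2-Segal) and invoke the Whitehead-type theorem, then transport across the Quillen equivalence. This works, but the ingredient you flag as needed---that $Y\times J$ is a cylinder object in the quasi-2-Segal model structure---is itself supplied by Proposition~\ref{prop:pushoutsinisohornsaturation} together with Lemma~\ref{lem:isohornpushoutsof2seg}, i.e.\ by exactly the same lemma the paper uses directly; so your route is valid but passes through more machinery (the interval structure, the homotopy-equivalence characterization of weak equivalences, and Ken Brown's lemma for $p_1^{\ast}$) to reach the same conclusion. The paper's explicit cell-attachment argument is the more economical of the two.
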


\begin{proof}
By symmetry, it suffices to show that a 2-Segal space is complete if and only if its left path space $P^{\triangleleft}(X)$ is complete. The completeness condition for the path space says that
\[
\Map(p_1^{\ast}(J),P^{\triangleleft}W)\to (P^{\triangleleft}W)_0
\]
is a Kan-Quillen weak equivalence, but this map is the map
\[
\Map(p_1^{\ast}(\tri_1[2]),W)\to W_1
\]
induced by $d_2\colon \Delta[1] \hookrightarrow \tri_1[2]$ (which is the join of $\{0\}\hookrightarrow J$ with a point). Hence, we see that the path space $P^{\triangleleft}W$ being local with respect to $\{0\}\hookrightarrow J$ is equivalent to $W$ itself being local with respect to $d_2\colon \Delta[1] \hookrightarrow \tri_1[2]$.

To show that the path space $P^{\triangleleft}W$ is complete if $W$ is, it suffices to show that the inclusion $p_1^{\ast}(d_2\colon \Delta[1] \hookrightarrow \tri_1[2])$ is a weak equivalence in the complete 2-Segal space model structure. Again, because $p_1^{\ast}$ preserves colimits, it suffices to observe that $d_2\colon \Delta[1] \hookrightarrow \tri_1[2]$ is a composite of pushouts of iso-horn inclusions
\[
\Delta[1]\hookrightarrow \bV_1[2]\hookrightarrow \tri_1[2],
\]
where the first inclusion is a pushout of $\bV_0[1]\hookrightarrow \tri_0[1]$, i.e., $\{0\}\hookrightarrow J$.

To show the converse, we observe that $\{0\}\hookrightarrow J$ is a retract of $d_2\colon \Delta[1] \hookrightarrow \tri_1[2]$, inducing the retract diagram



\[\begin{tikzcd}
	{\Map (p_1^{\ast}(J),W)} & {\Map (p_1^{\ast}(\tri_1[2]),W)} & {\Map (p_1^{\ast}(J),W)} \\
	{W_0} & {W_1} & {W_0\nospace{,}}
	\arrow[hook, from=1-1, to=1-2]
	\arrow[from=1-2, to=1-3]
	\arrow[from=1-2, to=2-2]
	\arrow[from=1-3, to=2-3]
	\arrow[from=1-1, to=2-1]
	\arrow[hook, from=2-1, to=2-2]
	\arrow[from=2-2, to=2-3]
\end{tikzcd}\]
where the center map is a Kan-Quillen weak equivalence if $P^{\triangleleft}W$ is complete, hence the outer vertical map is also a weak equivalence.
\end{proof}

\begin{cor}\label{cor:sdotgivescomplete}
Given an exact quasi-category $\mcC$, the simplicial space $S_{\bullet}(\mcC)$ as in \cite[Def.~7.3.1]{DK} is a complete 2-Segal space.
\end{cor}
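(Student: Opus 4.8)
The plan is to apply Proposition~\ref{prop:pathspacecomplete} and thereby reduce the claim to a completeness statement about a genuine Segal space. First I would invoke \cite[Thm.~7.3.3]{DK} to know that $S_{\bullet}(\mcC)$ is a vertical Reedy fibrant 2-Segal space, so that Definition~\ref{def:completeness} and Proposition~\ref{prop:pathspacecomplete} both apply to it. By Proposition~\ref{prop:pathspacecomplete}, it then suffices to show that \emph{one} of the path spaces of $S_{\bullet}(\mcC)$ is complete; I would work with the left path space $P^{\triangleleft}(S_{\bullet}(\mcC))$, which is a Segal space by the path space criterion for 2-Segal spaces \cite[Thm.~6.3.2]{DK} and is Reedy fibrant since d\'ecalage preserves Reedy fibrancy.

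The key step is to identify this Segal space. Its space of objects is $S_1(\mcC)\simeq \core\mcC$, the space of objects of $\mcC$ (since a $1$-simplex of the $S_{\bullet}$-construction records only a single object), and its space of morphisms is $S_2(\mcC)$, the space of short exact sequences in $\mcC$. Tracing the path-space face maps (recalling $d_i^{P^{\triangleleft}}=d_{i+1}$) shows that the morphism attached to a short exact sequence $A\rightarrowtail B\twoheadrightarrow B/A$ has source $A$ and target $B$ and is the admissible monomorphism $A\rightarrowtail B$. In other words, $P^{\triangleleft}(S_{\bullet}(\mcC))$ is the classifying diagram (Rezk nerve) of the sub-quasi-category $\mcC_m\subseteq \mcC$ of admissible monomorphisms; this is the d\'ecalage computation for the $S_{\bullet}$-construction, which I would cite from \cite{DK}. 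Since the classifying diagram of any quasi-category is a complete Segal space --- this is precisely the statement that the right adjoint $t^!$ of \cite{JT} is right Quillen into the complete Segal space model structure, hence carries the quasi-category $\mcC_m$ to a complete Segal space --- the path space $P^{\triangleleft}(S_{\bullet}(\mcC))$ is complete, and Proposition~\ref{prop:pathspacecomplete} then yields that $S_{\bullet}(\mcC)$ is complete.

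The main obstacle is exactly this identification: making precise that the d\'ecalage of $S_{\bullet}(\mcC)$ is the classifying diagram of $\mcC_m$, and in particular that the homotopy equivalences of this Segal space match the equivalences of $\mcC$, so that completeness genuinely transfers. If one prefers to avoid invoking the d\'ecalage identification wholesale, the same conclusion can be obtained by checking completeness directly: by the characterization recalled in the excerpt, a Segal space is complete if and only if it is local with respect to $p_1^{\ast}(\{0\}\hookrightarrow J)$, and in $P^{\triangleleft}(S_{\bullet}(\mcC))$ a morphism $A\rightarrowtail B$ is a homotopy equivalence precisely when its cofiber is a zero object, i.e.\ precisely when it is an equivalence of $\mcC$. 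Contracting $B$ onto $A$ then exhibits the space of homotopy equivalences as weakly equivalent to $\core\mcC = P^{\triangleleft}(S_{\bullet}(\mcC))_0$, using that $\mcC$ is a quasi-category (so $\core\mcC$ is a Kan complex). Either route gives completeness of the path space, and hence, via Proposition~\ref{prop:pathspacecomplete}, of $S_{\bullet}(\mcC)$.
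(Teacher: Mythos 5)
Your proposal is correct and follows essentially the same route as the paper: reduce via Proposition~\ref{prop:pathspacecomplete} to completeness of a path space, and obtain that from the proof of \cite[Thm.~7.3.3]{DK}, which shows the path spaces of $S_{\bullet}(\mcC)$ are complete Segal spaces. The paper simply cites that result as a black box, whereas you unpack its content (the identification of the d\'ecalage with the classifying diagram of the admissible monomorphisms); both versions are fine.
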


\begin{proof}
The proof of \cite[Thm.~7.3.3]{DK} shows that the path spaces of $S_{\bullet}(\mcC)$ are in fact complete Segal spaces.
\end{proof}

\subsection{Equivalences inside quasi-2-Segal sets and 2-Segal spaces}\label{sub:equivalences}

Let us expand upon Warning \ref{warn:completeness} and discuss the different ways to define completeness for 2-Segal spaces. At the heart of the issue is generalizing the notion of \emph{equivalence} for quasi-categories and Segal spaces to the 2-Segal setting.

\begin{defn}
Given an edge in a simplicial set (or simplicial space) $f\colon\Delta[1]\to X$, let us say that $f$ is \emph{bi-invertible} if it has a left inverse and a right inverse. That is, there exist 2-simplices $\sigma$ and $\sigma'$ such that $d_1 \sigma$ and $d_1\sigma'$ are degenerate and $d_2 \sigma = d_0 \sigma'=f$. Let us say that $f$ is a \emph{true equivalence} if $f\colon \Delta[1]\to X$ factors through $\Delta[1]\hookrightarrow J$.
\end{defn}

In general, all true equivalences are bi-invertible, but being a true equivalence is much stronger, since it says that there is a single edge which is both a left and a right inverse in a coherent way. However, if $X$ is a quasi-category (or a Segal space), the converse holds, and both notions agree with the usual notion of ``equivalence.'' The same does not hold in the 2-Segal world, as the following example demonstrates.

\begin{ex}
Let $\Sp$ be the stable quasi-category of spectra, and let $W$ be the simplicial set $S^{\operatorname{set}}_{\bullet}(\Sp)$ or the simplicial space $S_{\bullet}(\Sp)$. Given a spectrum $Y$, which corresponds to an edge in $W$, we see in the following diagrams that suspension and loops give us a left and a right inverse of $Y$.




\[\begin{tikzcd}
	\ast & {\Omega Y} & \ast && \ast & Y & \ast \\
	& \ast & Y &&& \ast & {\Sigma Y} \\
	&& \ast &&&& \ast
	\arrow[from=1-1, to=1-2]
	\arrow[from=1-2, to=2-2]
	\arrow[from=1-2, to=1-3]
	\arrow[from=1-3, to=2-3]
	\arrow[from=2-2, to=2-3]
	\arrow[from=2-3, to=3-3]
	\arrow[from=1-5, to=1-6]
	\arrow[from=1-6, to=2-6]
	\arrow[from=2-6, to=2-7]
	\arrow[from=1-6, to=1-7]
	\arrow[from=1-7, to=2-7]
	\arrow[from=2-7, to=3-7]
\end{tikzcd}\]

Thus, every edge of $W$ is bi-invertible. Meanwhile, the higher coherence data in a map $J\to W$ implies that only contractible spectra are true equivalences in $W$.
\end{ex}

This example also shows that our definition of completeness is strictly weaker than the other sensible notion of Rezk completeness used in other sources. Recall that for Segal spaces there are two equivalent characterizations of completeness: the first is that a Segal space $W$ is local with respect to $\ast\to J$, as in Definition \ref{def:completeness}, while the second (and actually what Rezk takes as the definition) is that $s_0\colon W_0\to W_{\operatorname{equiv}}$ is a weak equivalence of Kan complexes, where $W_{\operatorname{equiv}}\cin W_1$ is the full sub-Kan complex on the set of equivalences. One can show that a 2-Segal space is complete in the first sense if and only if $s_0\colon W_0\to W_{\operatorname{true equiv}}$ is a weak equivalence of Kan complexes, where $W_{\operatorname{true equiv}}\cin W_1$ is the full sub-Kan complex on the set of true equivalences. However, as the above example shows, the subspace of bi-invertible edges need not be equivalent to the subspace of true equivalences. Demanding instead that $s_0\colon W_0\to W_{\operatorname{bi-inv}}$ be a weak equivalence of Kan complexes, as in the definition of Rezk completeness used in \cite{GKT:partII}, \cite{HK:culf}, and \cite{HK:free}, yields a stronger condition. In particular, we see via Corollary \ref{cor:sdotgivescomplete} that $S_{\bullet}(\Sp)$ is complete in the sense of Definition \ref{def:completeness} but is not Rezk complete in the sense of \cite{GKT:partII}, \cite{HK:culf}, and \cite{HK:free}.

\appendix

\section{Additional Remarks}

\subsection{A remark about ``homotopy 2-Segal sets''}\label{sub:rmkhtpy}

Recall that, by identifying categories with their nerves, we can view $\tau_1$, the left adjoint of the nerve functor, as a functor $\sSet\to \sSet$. The simplicial set $\tau_1 X$ has the universal property that any map from a simplicial set $X$ to the nerve of a category factors uniquely through the unit map $X\to \tau_1 X$. When $X$ is a quasi-category, we call $\tau_1 X$ the \emph{homotopy category of $X$}. An edge in a quasi-category $\Delta[1]\to X$ is called an \emph{equivalence} if $\Delta[1]\to X\to \tau_1 X$ extends to a map $J\to \tau_1 X$, i.e., if the edge becomes an isomorphism in the homotopy category.

One reasonable definition of equivalence for quasi-2-Segal sets would be analogous: let $\tau_2$ denote the left adjoint to the inclusion of the full subcategory of 2-Segal sets into $\sSet$, then an edge in a quasi-2-Segal set $X$ is an \emph{equivalence} if its image in $X\to \tau_2 X$ extends to a copy of $J\to \tau_2 X$. However, we have avoided this discussion because $\tau_2$ is much harder to describe explicitly than $\tau_1$ and we do not expect $\tau_2$ to play as important of a role in quasi-2-Segal theory as $\tau_1$ does for quasi-categories.

One of the ways in which $\tau_2$ is not as nicely behaved as $\tau_1$ is that if $X$ is a quasi-2-Segal set, then $\tau_2 X$ is not quite the ``homotopy 2-Segal set'' of $X$ in the way that one might hope. In particular, if we view an $(n+1)$-simplex $H$ with degenerate edge $i\to i+1$ to be a homotopy from its $d_{i+1}$ face to its $d_i$ face, then we view two $n$-simplices $\sigma$ and $\sigma'$ of a simplicial set $X$ as \emph{homotopic} to one another, denoted by $\sigma\sim\sigma'$, if there exists a zig-zag of homotopies connecting them. When $X$ is a quasi-category, one can show that the $n$-simplices of $\tau_1 X$ correspond to equivalence classes of $n$-simplices in $X$ under the relation $\sim$. However, the same cannot necessarily be said of $\tau_2 X$ for a quasi-2-Segal set $X$. It is true that homotopic simplices of $X$ are identified in $\tau_2 X$ because 2-Segal sets do not have nontrivial homotopies by \cite{FGKPW}, but the converse is not true.

\begin{ex}
We find a counterexample in $S_{\bullet}^{\operatorname{set}}(\Ab)$, where one can show that two $n$-simplices are homotopic in $S_{\bullet}^{\operatorname{set}}(\Ab)$ if and only if they are isomorphic as diagrams in $\Ab$. Consider the 3-simplices

\[
\adjustbox{scale=0.9}{
\begin{tikzcd}[column sep=small]
	0 & {\mathbb{Z}/2\mathbb{Z}} & {\mathbb{Z}/4\mathbb{Z}} & {\mathbb{Z}/4\mathbb{Z}\oplus\mathbb{Z}/2\mathbb{Z}} & 0 & {\mathbb{Z}/2\mathbb{Z}} & {\mathbb{Z}/2\mathbb{Z}\oplus \mathbb{Z}/2\mathbb{Z}} & {\mathbb{Z}/4\mathbb{Z}\oplus\mathbb{Z}/2\mathbb{Z}} \\
	& 0 & {\mathbb{Z}/2\mathbb{Z}} & {\mathbb{Z}/2\mathbb{Z}\oplus\mathbb{Z}/2\mathbb{Z}} && 0 & {\mathbb{Z}/2\mathbb{Z}} & {\mathbb{Z}/2\mathbb{Z}\oplus\mathbb{Z}/2\mathbb{Z}} \\
	&& 0 & {\mathbb{Z}/2\mathbb{Z}} &&& 0 & {\mathbb{Z}/2\mathbb{Z}} \\
	&&& 0 &&&& 0
	\arrow[hook, from=2-2, to=2-3]
	\arrow[hook, from=1-1, to=1-2]
	\arrow[hook, from=3-3, to=3-4]
	\arrow[two heads, from=3-4, to=4-4]
	\arrow["{\pi_1}", two heads, from=2-4, to=3-4]
	\arrow["{1\oplus 1}", two heads, from=1-4, to=2-4]
	\arrow[two heads, from=2-3, to=3-3]
	\arrow["{i_0}"', hook, from=2-3, to=2-4]
	\arrow[two heads, from=1-2, to=2-2]
	\arrow["{1\oplus 1}", two heads, from=1-8, to=2-8]
	\arrow["{i_1}"', hook, from=2-7, to=2-8]
	\arrow["{\pi_0}", two heads, from=2-8, to=3-8]
	\arrow[hook, from=2-6, to=2-7]
	\arrow[hook, from=3-7, to=3-8]
	\arrow[hook, from=1-5, to=1-6]
	\arrow[two heads, from=1-6, to=2-6]
	\arrow[two heads, from=2-7, to=3-7]
	\arrow[two heads, from=3-8, to=4-8]
	\arrow["{\pi_1}", two heads, from=1-7, to=2-7]
	\arrow["{2\oplus 1}", hook, from=1-7, to=1-8]
	\arrow["{i_0}", hook, from=1-6, to=1-7]
	\arrow[two heads, from=1-3, to=2-3]
	\arrow["{i_0}", hook, from=1-3, to=1-4]
	\arrow[hook, from=1-2, to=1-3]
\end{tikzcd}
}
\]
of $S_{\bullet}^{\operatorname{set}}(\Ab)$. The $d_0$ and $d_2$ faces of these simplices are respectively isomorphic, and so are respectively identified in $\tau_2 S_{\bullet}^{\operatorname{set}}(\Ab)$, which means that these 3-simplices are also identified by uniqueness of 2-Segal spine extensions. However, these 3-simplices are not homotopic in $S_{\bullet}^{\operatorname{set}}(\Ab)$ because they are not isomorphic as diagrams in $\Ab$.
\end{ex}

\subsection{A remark about higher Segal conditions}\label{sec:highersegal}

In \cite{DK}, Dyckerhoff-Kapranov also introduced $d$-Segal spaces for $d>2$, defined in terms of $d$-dimensional cyclic polytopes. For each $d\geq 2$, there is an ``upper'' and a ``lower'' $d$-Segal condition, and we say that a simplicial object is $d$-Segal if it satisfies both the upper and lower $d$-Segal conditions. We refer to \cite{Poguntke} for explicit definitions. For our present purposes, it suffices to note the following facts:

\begin{itemize}
    \item \cite[Prop.~2.10]{Poguntke} If a simplicial object $X$ is upper or lower $n$-Segal, then it is also (fully) $d$-Segal for all $d> n$. In particular, if a simplicial set is lower or upper 2-Segal then it is $d$-Segal for all $d>2$.\\
    \item \cite[Prop.~2.7]{Poguntke} A simplicial object $X$ is lower (resp.~upper) 2-Segal if and only if its left (resp.~right) path space is Segal.
\end{itemize}

One might wonder whether there exists a good notion of quasi-$d$-Segal space for all $d\geq 1$. Unfortunately, the situation is not as nice for $d>2$ because there cannot be a Cisinski model structure whose fibrant objects are precisely the quasi-$d$-Segal sets. The reason is that there exist 3-Segal simplicial sets (and hence $d$-Segal for all $d>3$ too) which are not fibrant in any Cisinski model structure, as in the following example.

\begin{ex}
The iso-horn $\bV_1[2]$ is lower 2-Segal, and hence $d$-Segal for all $d\geq 3$ by the first fact above. To see why, by the second fact above it suffices to observe that the left path space $P^{\triangleleft}(\bV_1[2])$ is Segal, i.e., the nerve of a category. This path space decomposes as $P^{\triangleleft}(\bV_1[2])\cong \Delta[1]\sqcup P^{\triangleleft}(J)$, which is the disjoint union of nerves of categories. (Since $J$ is the nerve of a category, so is its path space.)

As explained in \cite{Feller:minimal}, the fibrant objects in a Cisinski model structure on $\sSet$ must have fillers of all iso-horn inclusions, so the iso-horn $\bV_1[2]$ itself cannot be fibrant in any such model structure.
\end{ex}

It could still be possible that for $d>2$ there are model structures whose fibrant objects are precisely the quasi-$d$-Segal sets if we work with a different class of cofibrations, but it would likely yield a theory which looks quite different from that of quasi-categories and quasi-2-Segal sets.

\end{document}